\theoremstyle{plain}
	\newtheorem{theorem}{Theorem}
	\newtheorem{proposition}[theorem]{Proposition}
	\newtheorem{lemma}[theorem]{Lemma}
	\newtheorem{corollary}[theorem]{Corollary}
\theoremstyle{definition}
\theoremstyle{definition}
	\newenvironment{example}
  {\pushQED{\qed}\examplex}
  {\popQED\endexamplex}
\newcommand{\E}{\mathbb{E}}
\newcommand{\V}{\mathbb{V}}
\newcommand{\R}{\mathbb{R}}
\newcommand{\Pp}{\mathbb{P}} 
\newcommand{\BMM}{\hat{\theta}_{\textnormal{BMM}}}
\newcommand{\aBMM}{\hat{\theta}_{\textnormal{aBMM}}}
\newcommand{\IS}{\hat{\theta}_{\textnormal{IS}}}
\newcommand{\Tp}{\hat{\theta}_{\mathbf{p}}}
\newcommand{\SM}{\overline{\theta}}
\newcommand{\MM}{\hat{\theta}_{\textnormal{MM}}}
\newcommand{\HL}{\hat{\theta}_{\textnormal{HL}}}
\DeclareMathSymbol{\widehatsym}{\mathord}{largesymbols}{"62}
\newcommand\lowerwidehatsym{%
  \text{\smash{\raisebox{-1.3ex}{%
    $\widehatsym$}}}}
\newcommand\widhat[1]{%
  \mathchoice
    {\accentset{\displaystyle\lowerwidehatsym}{#1}}
    {\accentset{\textstyle\lowerwidehatsym}{#1}}
    {\accentset{\scriptstyle\lowerwidehatsym}{#1}}
    {\accentset{\scriptscriptstyle\lowerwidehatsym}{#1}}
}
\DeclareMathOperator{\MSE}{MSE}
\DeclareMathOperator{\Cov}{Cov}
\DeclareMathOperator{\iid}{\stackrel{\text{iid}}{\sim}}
\DeclareMathOperator{\med}{median}
\DeclareMathOperator{\Dir}{Dir}
\let\skew\relax%
\DeclareMathOperator{\skew}{skew}
\let\Im\relax%
\DeclareMathOperator{\Im}{Im}
\newcommand*{\email}[1]{\bgroup\color{blue}\href{mailto:#1}{#1}\egroup}
\newcommand*{\ppara}[1]{\noindent\textbf{\textsf{#1}}\,\,}
\begin{document}

\title{\textbf{Robust Mean Estimation with the Bayesian Median of Means}}
\author{Paulo Orenstein\footnote{Stanford University, Stanford, California 94305, USA. \email{pauloo@stanford.edu}}}
\date{}
\maketitle

\begin{abstract}
	\ppara{Abstract:} The sample mean is often used to aggregate different unbiased estimates of a real parameter, producing a final estimate that is unbiased but possibly high-variance. This paper introduces the Bayesian median of means, an aggregation rule that roughly interpolates between the sample mean and median, resulting in estimates with much smaller variance at the expense of bias. While the procedure is non-parametric, its squared bias is asymptotically negligible relative to the variance, similar to maximum likelihood estimators. The Bayesian median of means is consistent, and concentration bounds for the estimator's bias and $L_1$ error are derived, as well as a fast non-randomized approximating algorithm. The performances of both the exact and the approximate procedures match that of the sample mean in low-variance settings, and exhibit much better results in high-variance scenarios. The empirical performances are examined in real and simulated data, and in applications such as importance sampling, cross-validation and bagging.
	
\end{abstract}

\section{Introduction} \label{sec:introduction}
The problem of combining many unbiased estimates $\hat{\theta}_1, \ldots, \hat{\theta}_n$ of a parameter $\theta \in \R$ to form a single estimate $\hat{\theta}$ arises throughout statistics. A common way to perform aggregation when the distribution of $\hat{\theta}_i$ is unknown is via the sample mean, $\SM = \frac{1}{n}\sum_{i=1}^{n} \hat{\theta}_i$. However, this can often lead to poor performance if the underlying distribution is very skewed or heavy-tailed. 

For example, the importance sampling estimate of an integral $\theta = \int f(x) p(x) dx$ is formed by taking samples $X_i \iid q$, letting $\hat{\theta}_i=f(X_i)p(X_i)/q(X_i)$ and combining the estimates using the sample mean, $\frac{1}{n}\sum_{i=1}^{n}\hat{\theta}_i \approx \theta$. The estimator is unbiased and guaranteed to converge almost surely to $\theta$, as long as $\theta$ exists. Still, in many cases this ratio estimator has a complicated distribution with extremely high or infinite variance, making the estimates unreliable. Similar situations arise in various modern statistical procedures such as cross-validation, bagging and random forests. 

If the underlying distribution of $\hat{\theta}_i$ were known, aggregation could be performed using maximum likelihood, which enjoys great theoretical properties. In general, both its bias and variance decrease as $O(1/n)$, so some amount of bias is introduced at the expense of variance reduction. However, since mean squared error can be decomposed as the sum of bias squared and variance, the bias component is asymptotically negligible compared to the variance. In the problem at hand, the distribution of $\hat{\theta}_i$ is not known, but one can still look for a non-parametric estimator that similarly introduces a small, asymptotically negligible bias, to obtain a significant variance reduction relative to the sample mean.

Given many unbiased estimators $\hat{\theta}_1, \ldots, \hat{\theta}_n \in \R$, consider the following procedure:
\begin{enumerate}[leftmargin=4\parindent]
\item draw $\mathbf{p}^{(j)} \sim \Dir_n(\alpha, \ldots, \alpha)$ for $j=1, \ldots, J$;
\item compute $Y_j = \sum_{i=1}^{n} p_{i}^{(j)}\hat{\theta}_i$, for $j=1, \ldots, J$; 
\item estimate $\BMM = \widhat{\med}(Y_1, \ldots, Y_j)$.
\end{enumerate}
Here, $\Dir_n(\alpha, \ldots, \alpha)$ denotes the Dirichlet distribution with parameter vector $(\alpha, \ldots, \alpha) \in \R^n$.

This procedure, referred to as the \emph{Bayesian median of means}, is detailed in Section \ref{sec:bayesian_median_of_means}. Since the median and mean of a symmetric distribution coincide, one can think of this estimator as symmetrizing $\hat{\theta}_1, \ldots, \hat{\theta}_n$ via averaging before applying the median for added robustness. Section \ref{sec:theoretical_guarantees} studies the theoretical properties of this estimator, in particular proving that, when $J = O(n)$, the squared bias grows as $O(1/n^2)$ while the variance only grows as $O(1/n)$, so asymptotically the amount of bias introduced is insignificant; finite-sample guarantees are also provided, as well as consistency results. Section \ref{sec:empirical_results} investigates the empirical performance of this estimator in many different settings, showing that in general the trade-off is well worth it.

The procedure only has one hyperparameter, the Dirichlet concentration level $\alpha$. When $\alpha \to 0$ and $J$ is large the Bayesian median of means approximates the sample median, whereas when $\alpha \to \infty$ it approximates the sample mean. Thus, $\alpha$ can be thought of as controlling where the procedure stands between mean and median. In Proposition \ref{prop:aBMM}, it is shown that the Bayesian median of means is first-order equivalent to a skewness-corrected sample mean, with the correction controlled by $\alpha$, suggesting an approximate, non-randomized version of the procedure above, dubbed the \emph{approximate Bayesian median of means}:
\begin{equation*}
 \aBMM = \SM - \frac{1}{3} \frac{\sqrt{s^2_{\hat{\bm{\theta}}}}}{n\alpha+2}\widhat{\skew}(\hat{\bm{\theta}}),
\end{equation*}
where $s^2_{\hat{\bm{\theta}}}$ is the sample variance of $\hat{\bm{\theta}}=(\hat{\theta}_1, \ldots, \hat{\theta}_n)$ and $\widhat{\skew}(\hat{\bm{\theta}})$ is the sample skewness. Section \ref{sec:choosing_alpha} argues that $\alpha=1$ gives a good compromise between robustness and efficiency.

In many of the results below, $\hat{\theta}_1, \ldots, \hat{\theta}_n$ are regarded as fixed, and the statistical procedures are analyzed conditionally. This is intended to reproduce the common setting in which the unbiased estimators $\hat{\theta}_i$ have been obtained before the Bayesian median of means is used. It also implies some of the analyses hold even if $\hat{\theta}_1, \ldots, \hat{\theta}_n$ are not independent, which is the case for many important applications, such as cross-validation. Regarding $\hat{\theta}_i$ as fixed also bypasses the issue of computation, since in some cases the computational effort of drawing Dirichlet samples could be directed to obtaining further data points, as in Example \ref{example:IS_intro_example_MSE} below. When obtaining more points is relatively cheap, the approximate Bayesian median of means remains an attractive alternative that does not require further sampling.

\begin{example}[Importance sampling] \label{example:IS_intro_example_MSE}
As a first example, consider the importance sampling setting discussed above. Let $X_1, \ldots, X_n \iid q = \text{Expo}(1)$ be independent and identically distributed Exponential random variables, to be used in estimating the mean of $p = \text{Expo}(\lambda)$, that is, $1/\lambda$. Given a sample $X_1, \ldots, X_n$, one forms the importance sampling terms $\hat{\theta}_i = X_i\cdot p(X_i)/q(X_i) = \lambda X_i e^{-(\lambda-1)X_i}$, and estimates $\hat{\theta}_{\text{IS}} = \frac{1}{n} \sum_{i=1}^{n}\hat{\theta}_i$. This sample mean will be compared to the Bayesian median of means estimator, $\BMM$, and its approximation, $\aBMM$, formed using the procedures outlined above. 

\begin{figure}[htbp]
 \begin{center}
{\fontfamily{cmss}\selectfont \textbf{MSE and MAD of $\aBMM$ and $\BMM$ vs $\IS$}}

\vspace{5mm}

\hspace*{1em}\includegraphics[width=1\textwidth]{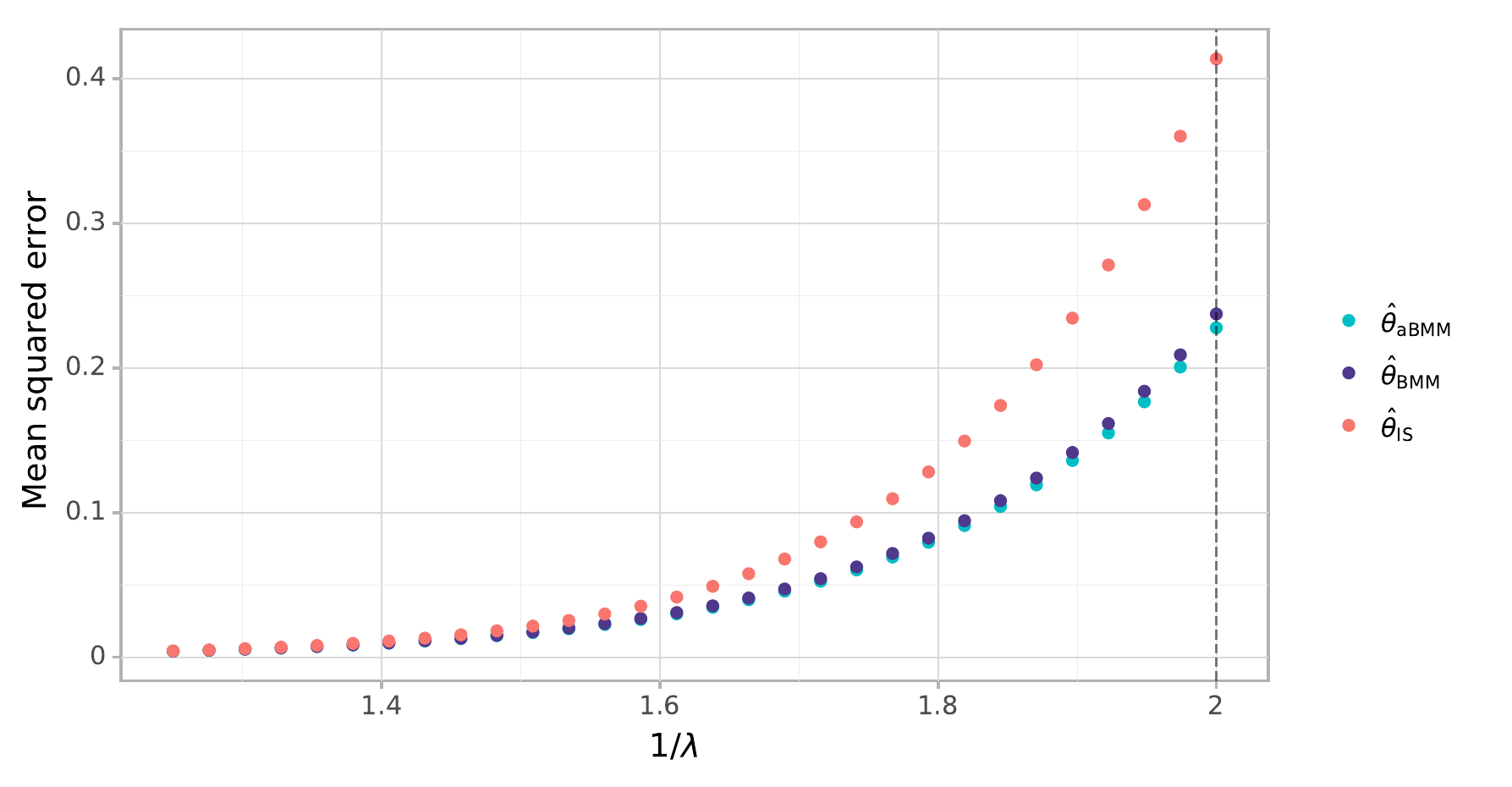}

\hspace*{1em}\includegraphics[width=1\textwidth]{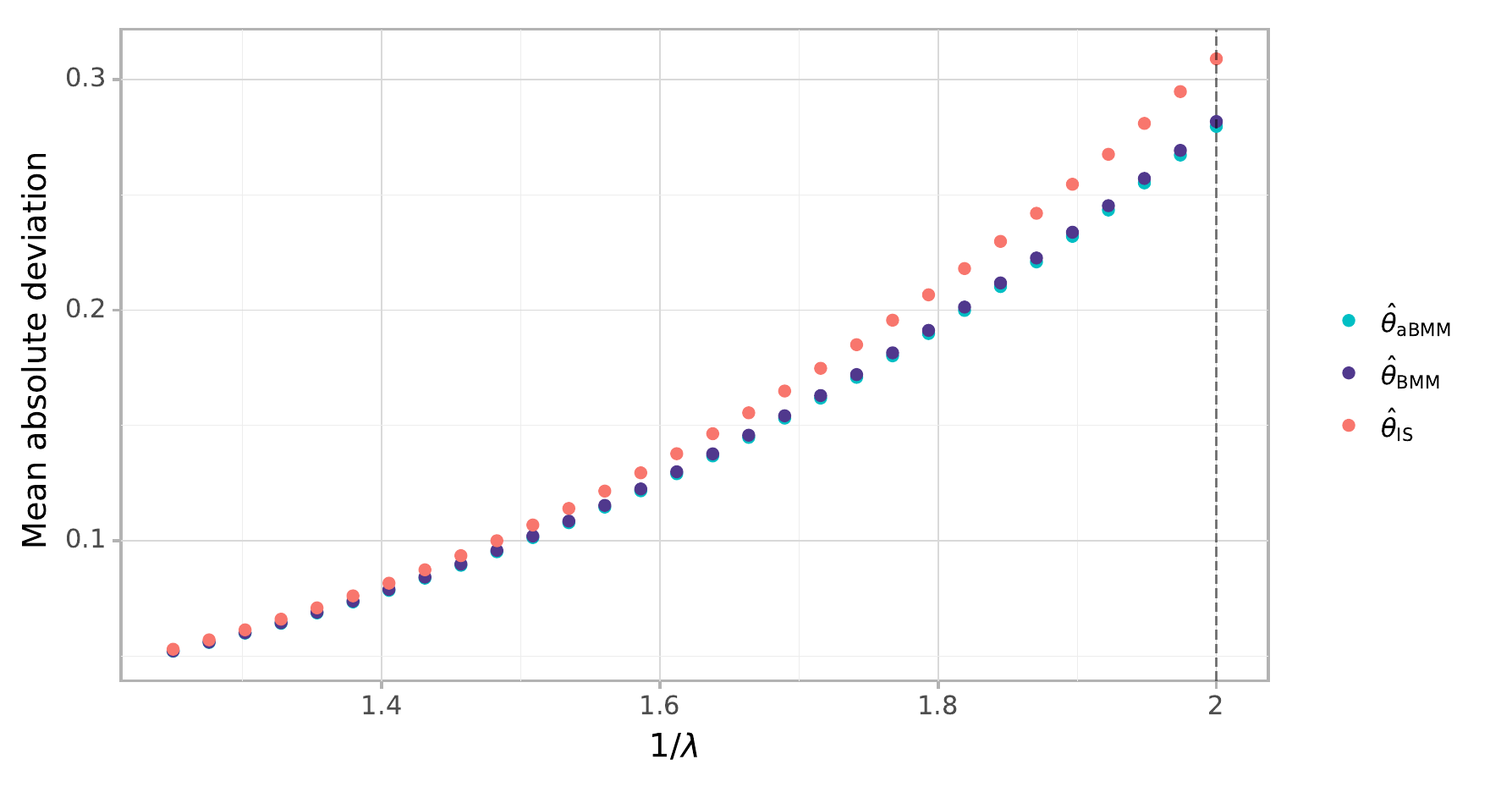}

\caption{Mean squared error (MSE) and mean absolute deviation (MAD) of importance sampling estimates using Bayesian median of means (BMM), its deterministic approximation (aBMM) and the usual importance sampling estimator (IS). Cases range from easy ($1/\lambda\approx 1.25$) to hard ($1/\lambda\approx 2$). When $1/\lambda \geq 2$ the IS estimator has infinite variance. Both BMM and aBMM outperform IS 30 out of 30 times in both MSE and MAD.}
\label{fig:IS_intro_example_MSE}
 \end{center}
\end{figure}

Figure \ref{fig:IS_intro_example_MSE} shows the mean squared error (MSE) and mean absolute deviation (MAD) of both procedures, with $30$ values of $1/\lambda$, equispaced between $1.25$ and $2$. This ranges from easy settings (when $1/\lambda \approx 1.25$) to complicated ones ($1/\lambda \approx 2$). In particular, when $1/\lambda \geq 2$ the variance of $\hat{\theta}_{\text{IS}}$ becomes infinite. Here, the number of importance sampling draws is $n=1000$, the number of Dirichlet draws is $J=1000$, and $\alpha=1$. For each $\lambda$ the estimator is computed $5000$ times to estimate the errors. Note the Bayesian median of means requires sampling $J=1000$ additional Dirichlet random variables, and this extra computational allowance could alternatively have been used to generate more points $\hat{\theta}_i$; the approximate algorithm, on the other hand, requires no extra sampling and exhibits similar performance.

In this simulation, the MSE of the Bayesian median of means and its approximation were smaller than the importance sampling estimator's 30 out of 30 times. The same was true for mean absolute deviation (MAD). For small values of $\lambda$ both estimators perform comparably. As $\lambda$ increases, it is easier to see the stability brought about by the Bayesian median of means makes it significantly better. In all 30 instances, the usual importance sampling estimator had larger standard deviation but smaller bias than both procedures, displaying the bias-variance trade-off involved. While considering values of $1/\lambda$ closer to 1 should favor the usual estimator, taking 30 equispaced values of $\lambda$ between $1$ and $2$ still sees better MSE and MAD performance by the Bayesian median of means 29 and 28 times; the approximate version is still better 30 and 29 times. Section \ref{sec:applications} considers more extensive simulations and real data examples, reaching similar conclusions.
\end{example}

\vspace{5mm}

\textbf{Paper structure.} Section \ref{sec:related_work} reviews the literature and similar attempts in using the median to add robustness to mean estimates. Section \ref{sec:median_of_weighted_means} introduces the general idea of symmetrizing estimates before using the median to estimate location parameters, and investigates to what degree that can help decrease mean squared error. Section \ref{sec:bayesian_median_of_means} settles on a particular distribution for symmetrizing estimates, giving rise to the Bayesian median of means. It also analyzes its theoretical properties and gives both asymptotic and finite-sample guarantees, and presents the (non-randomized) approximate Bayesian median of means. Finally, it also considers different ways of setting $\alpha$, the only hyperparameter in the procedure. Section \ref{sec:empirical_results} looks at the performance of these estimators in a myriad settings, including both real and simulated data, in particular comparing them to the sample mean. Finally, Section \ref{sec:conclusion} concludes by giving further research directions.

\section{Related Work} \label{sec:related_work}

The idea of combining mean and median estimators has been visited several times in the statistical robustness literature, particularly for the estimation of location parameters in symmetric distributions. For instance, \cite{lai1983adaptive} propose an adaptive estimator that picks either the sample mean or median to estimate the center of a symmetric distribution, while \cite{damilano2004efficiency} and \cite{chan1994simple} investigate using linear combinations of mean and medians, with weights picked according to asymptotic criteria.

More recently, there has been intense work on the so-called median of means estimator (see \cite{alon1999space}, \cite{jerrum1986random}, \cite{nemirovsky1983problem}). Given independent and identically distributed random variables $\hat{\theta}_1, \ldots, \hat{\theta}_n$, the median of means estimator for $\E[\hat{\theta}_1]$ is given by dividing the data into blocks with $k$ elements and estimating
\begin{equation} \label{eq:median_of_means}
 \MM = \widhat{\med}\left(\frac{1}{k}\sum_{i=1}^{k}\hat{\theta}_i, \ldots, \frac{1}{k}\sum_{i=n-k+1}^{n}\hat{\theta}_i\right),
\end{equation}
with minor adjustments if $n/k$ is not an integer. For instance, \cite{devroye2016sub} discuss the mean estimation problem from a non-asymptotic perspective and show that the median of means estimator, among others, can outperform the sample mean in terms of concentration bounds, via the following proposition, proved in the same paper. 

\begin{proposition} \label{prop:mm_concentration}
Consider an iid sample $\hat{\theta}_1, \ldots, \hat{\theta}_n$ with mean $\theta$ and variance $\sigma^2$. Define the median of means estimator $\MM$ as in (\ref{eq:median_of_means}), with $k$ elements in each of the $g=n/k$ groups. Then, with probability $1-\delta$,
\begin{equation*}
|\widhat{\med}(\overline{\theta}_1, \ldots, \overline{\theta}_g)-\theta| \leq 6 \sigma \sqrt{\frac{\log(1/\delta)}{n}}.
\end{equation*}
\end{proposition}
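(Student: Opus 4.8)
The plan is to prove Proposition~\ref{prop:mm_concentration} via the standard two-step ``boosting the median'' argument. First I would control a single block average $\overline{\theta}_j = \frac{1}{k}\sum_{i \in B_j} \hat{\theta}_i$: since the $\hat{\theta}_i$ are iid with mean $\theta$ and variance $\sigma^2$, the block average has mean $\theta$ and variance $\sigma^2/k$, so by Chebyshev's inequality $\Pp(|\overline{\theta}_j - \theta| > \sigma/\sqrt{\beta k}) \le \beta$ for any $\beta \in (0,1)$. I will fix the deviation radius at $r = 2\sigma/\sqrt{k}$, which corresponds to $\beta = 1/4$, so that each block average lands within $r$ of $\theta$ with probability at least $3/4$.

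The second step is the median amplification. Let $Z_j = \ones\{|\overline{\theta}_j - \theta| > r\}$ be the indicator that block $j$ is ``bad''; the $Z_j$ are iid Bernoulli with success probability $p \le 1/4$. The key observation is that if fewer than half the blocks are bad, then the median $\widhat{\med}(\overline{\theta}_1, \ldots, \overline{\theta}_g)$ must lie within $r$ of $\theta$ — indeed, if the median were above $\theta + r$, then at least half the block averages would exceed $\theta + r$ and hence be bad, a contradiction, and symmetrically on the other side. So it suffices to bound $\Pp\!\left(\sum_{j=1}^g Z_j \ge g/2\right)$. Since $\E[\sum_j Z_j] \le g/4$, this is a large-deviation event for a sum of independent indicators, and Hoeffding's inequality gives $\Pp\!\left(\sum_j Z_j \ge g/2\right) \le \exp(-2 g (1/4)^2) = \exp(-g/8)$.

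Setting $\exp(-g/8) = \delta$ gives $g = 8\log(1/\delta)$, and then $r = 2\sigma/\sqrt{k} = 2\sigma\sqrt{g/n} = 2\sigma\sqrt{8\log(1/\delta)/n} = 2\sqrt{8}\,\sigma\sqrt{\log(1/\delta)/n}$. Since $2\sqrt{8} = 4\sqrt{2} \approx 5.66 \le 6$, this yields the claimed bound $|\widhat{\med}(\overline{\theta}_1,\ldots,\overline{\theta}_g) - \theta| \le 6\sigma\sqrt{\log(1/\delta)/n}$ with probability at least $1-\delta$. (If one prefers to avoid Hoeffding and use Chernoff for the binomial tail, the constant can be tuned to land under $6$ as well; the looseness in the choice $\beta = 1/4$ leaves room.)

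The main obstacle — more a bookkeeping nuisance than a genuine difficulty — is the interplay of the three constants: the Chebyshev radius multiplier, the per-block failure probability $\beta$, and the exponential rate in the median step all trade off against each other, and one must choose them so the final numerical constant comes out at or below $6$. A secondary point to handle carefully is the divisibility assumption: the statement already flags ``minor adjustments if $n/k$ is not an integer,'' so I would simply assume $g = n/k \in \mathbb{N}$ and note that the general case follows by the same argument with the last block of size at most $2k$, affecting only constants. One should also be slightly careful that the median-to-blocks implication uses that strictly fewer than half the blocks are bad (ties in the median are handled by either convention since a strict majority of blocks being within $r$ forces any median value into $[\theta - r, \theta + r]$).
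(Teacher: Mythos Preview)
Your argument is correct and is precisely the standard Chebyshev-plus-Hoeffding proof; the paper itself does not supply a proof of this proposition but instead attributes it to \cite{devroye2016sub}, whose argument is essentially the one you have written. The only point worth flagging is that the proposition as stated is silent on how $g$ depends on $\delta$, and your proof (correctly) requires the choice $g = \lceil 8\log(1/\delta)\rceil$; you might make this explicit, since the bound certainly fails for arbitrary $g$.
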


Such concentration cannot be achieved by the sample mean in general, unless stronger hypotheses, such as sub-Gaussianity of $\hat{\theta}_i$, are assumed. Variants of this estimator are further analyzed in the heavy-tailed setting of \cite{brownlees2015empirical}, \cite{hsu2016loss} and \cite{bubeck2013bandits}. The estimator was also used to combine Bayesian posterior updates in split datasets in \cite{pmlr-v32-minsker14}. 

Unfortunately, however, there are practical challenges in using the median of means estimator. First, there is little guidance in how to pick the number of groups, which essentially amounts to the estimator's willingness to trade off bias for variance. Furthermore, in spite of its theoretical properties, the median of means underutilizes the data available by only using each datapoint once. This guarantees independence between blocks, but limits the number of means one can obtain in a given dataset. This requirement can be relaxed to a degree, but not completely (see \cite{joly2016robust}). The estimator considered here has no such restrictions, and can be viewed as a smoothed version of the median of means. Besides, the randomness introduced in sampling the blocks allow for probabilistic analyses and parameter choices conditional on the realized datapoints. A further benefit is that, unlike the median of means, its smoothed counterpart does not depend on the order of the datapoints while still being computationally tractable.

In fact, the median of means itself can be cast as a computation compromise on the celebrated Hodges-Lehmann estimator proposed in \cite{hodges1963estimates}:
\begin{equation} \label{def:HL}
\HL = \widhat{\med}\left(\left\{\frac{\hat{\theta}_i+\hat{\theta}_j}{2},\quad i, j=1, \ldots, n, \text{ and } i\neq j\right\}\right).
\end{equation}
Many theoretical properties are known about it; for instance, it has an asymptotic breakdown point of $0.29$, meaning a contamination of up to $29\%$ of the data taking arbitrary value still leaves the estimator bounded (unlike the sample mean, which has an asymptotic breakdown point of 0). However, generalizing it to $m$-averages in a straightforward way, with $m\geq 2$, is harder both in theoretical and computational terms.

Closer in spirit to the present work is the suggestion in \cite{buhlmann2003bagging} to compute sample means using bootstrap samples and then aggregate them via the median. This is similar to bagging estimators $\hat{\theta}_1, \ldots, \hat{\theta}_n$, but using the median instead of the mean in assembling the averages. In particular, \cite{buhlmann2003bagging} empirically observed that the median helped make the final estimators better in terms of MSE in non-convex problems. This work generalizes \cite{buhlmann2003bagging} by resampling the weights used in the averaging using a Dirichlet distribution and by providing a more extensive theoretical analysis, in particular, deriving a non-randomized approximation. This leads to a smoother estimator, particularly relevant in the heavy-tailed or skewed setting, and also allows for theoretical results and hyperparameter recommendations based on the literature on Dirichlet means (see \cite{cifarelli1990distribution}, \cite{cifarelli2000some}, \cite{regazzini2002theory} and \cite{diaconis1996some}) and, more generally, on $P$-means (\cite{pitman2018random}).

\section{Median of Weighted Means} \label{sec:median_of_weighted_means}

Given a probability space $(\Omega, \mathcal{F}, \mathbb{P}_{\hat{\theta}})$, let $\hat{\theta}_i : \Omega \to \R$, $i=1, \ldots ,n$ be a collection of independent and identically distributed random variables, with $\E[\hat{\theta}_1]=\theta$ and $\V[\hat{\theta}_1]=\sigma^2 < \infty$. This will be denoted below by $\hat{\theta}_i \iid [\theta, \sigma^2]$. Consider the problem of obtaining an estimator for $\theta$ given observations $\hat{\bm{\theta}}=(\hat{\theta}_1, \ldots, \hat{\theta}_n)$.

A common aggregation procedure is the sample mean, $\SM = \frac{1}{n}\sum_{i=1}^{n} \hat{\theta}_i$. Besides retaining unbiasedness, it also possesses many satisfying theoretical properties; for example, the sample mean is the best linear unbiased estimator for the population mean, minimizes the maximum expected mean squared error populations with bounded variance (see \cite{bickel1981minimax}), is an admissible procedure in one dimension (\cite{10.2307/2239052}), and is the maximum likelihood estimate in exponential families under independent sampling. For these reasons, as well as computationally simplicity, the sample mean is a widely adopted non-parametric procedure for aggregating one-dimensional estimators.

However, in many settings the underlying distribution of $\hat{\theta_1}$ is very skewed or heavy tailed, in which case the sample mean becomes highly unstable. %
A robust solution to the aggregation problem is to use the sample median. If the distribution is symmetric, then mean and median coincide, the sample median is still an unbiased estimator, and the median can have better asymptotic mean squared error than the bias, as in Example \ref{example:ARE}.

\begin{example}[Mean and median] \label{example:ARE}

Let $\hat{\theta}_1, \ldots, \hat{\theta}_n \iid N(\theta, \sigma^2)$, and consider estimators $\SM = \frac{1}{n}\sum_{i=1}^n \hat{\theta}_i$ and $\widhat{\med}(\hat{\theta}_1, \ldots, \hat{\theta}_n)$, which are unbiased for estimating $\theta$. In this case, the asymptotic mean squared error is given by the asymptotic variance. Since $n \cdot \V[\SM] \to \sigma^2$ as $n \to \infty$, and also $n \cdot \V[\widhat{\med}(\hat{\theta}_1, \ldots, \hat{\theta}_n)]\to \pi \sigma^2/2$ (see Proposition \ref{prop:medianCLT} below), it holds that, asymptotically, $\MSE(\SM) = (2/\pi) \cdot \MSE(\widhat{\med}(\hat{\theta}_1, \ldots, \hat{\theta}_n))$, so the sample mean exhibits better asymptotic performance.

On the other hand, consider $\hat{\theta}_1, \ldots, \hat{\theta}_n \iid \text{Laplace}(\theta, \sigma^2)$. As before, both $\SM = \frac{1}{n}\sum_{i=1}^n \hat{\theta}_i$ and $\widhat{\med}(\hat{\theta}_1, \ldots, \hat{\theta}_n)$ are unbiased for estimating $\theta$. In particular, note $\widhat{\med}(\hat{\theta}_1, \ldots, \hat{\theta}_n)$ is the maximum likelihood estimator. When $n \to \infty$, $n \cdot \V[\SM] \to \sigma^2$ while $n \cdot \V[\widhat{\med}(\hat{\theta}_1, \ldots, \hat{\theta}_n)]\to \sigma^2/2$, so $\MSE(\SM) = 2 \cdot \MSE(\widhat{\med}(\hat{\theta}_1, \ldots, \hat{\theta}_n))$, and the median is asymptotically better than the sample mean. In more extreme cases, say if $\hat{\theta}_1, \ldots, \hat{\theta}_n \iid \text{Cauchy}(0, 1)$, the median can be asymptotically infinitely better than the sample mean.

One can try to use a compromise between the two estimators, for instance the Hodges-Lehmann estimator, $\HL$, defined in (\ref{def:HL}), which is still unbiased for symmetric distributions. Asymptotically, with a Normal sample, $\MSE(\SM) = (3/\pi) \cdot \MSE(\HL)$, and with a Laplace sample, $\MSE(\SM) = 1.5 \cdot \MSE(\HL)$. In fact, if $\mathcal{S}$ is the set of symmetric distributions centered at $\theta$, it can be shown that asymptotically $\inf_{\{F_{\hat{\bm{\theta}}}: F_{\hat{\bm{\theta}}} \in \mathcal{S}\}} \MSE(\SM)/\MSE(\HL)=0.864$ (see \cite{hodges1956efficiency}), so $\HL$ never fares much worse than $\SM$ but can sometimes do much better.
\end{example}

In general, however, the median will be a biased estimator. For instance, if the distribution of $\hat{\theta}_i$ is highly skewed, $\widhat{\med}(\hat{\theta}_i)$ might be very different from $\theta = \E[\hat{\theta}_i]$. One way to soften the bias is to take the median of weighted averages of $\hat{\theta}_i$, which are more symmetric around $\theta$ by the Central Limit Theorem. Hence, the application of the median becomes less costly in terms of bias, while still guaranteeing increased robustness.

A general scheme for aggregating many unbiased estimators $\hat{\theta}_1, \ldots, \hat{\theta}_n$ using a vector of probabilities $\mathbf{p}=(p_1, \ldots, p_n)$, with $\sum_{i=1}^{n}p_i=1$, $p_i\geq0$, is as follows:
\begin{enumerate}
\item Sample:
\begin{equation} \label{eq:step1:sample}
\mathbf{p}^{(j)}\sim \Pp, \qquad j=1, \ldots, J,
\end{equation}
where $\Pp$ is a probability measure such that $\sum_{i=1}^{n} p_i^{(j)}=1$.
\item Estimate: 
\begin{equation} \label{eq:step2:median}
\Tp = \widhat{\med}\left(\sum_{i=1}^{n}p_i^{(1)}\hat{\theta}_i, \ldots, \sum_{i=1}^{n}p_i^{(J)}\hat{\theta}_i\right).
\end{equation}
\end{enumerate}

Consider the following choices for $\Pp$:
\begin{itemize}
\item if $\Pp$ sets $p_i=1$ for $i$ chosen uniformly at random and $J$ is sufficiently large, then $\Tp$ is essentially the sample median;
\item if $\Pp$ is a delta mass at $(\frac{1}{n}, \ldots, \frac{1}{n})$, then $\Tp$ is the sample mean;
\item if $\Pp$ selects sets $S$ of size $k$ in a uniformly chosen partition of $\{1, \ldots, n\}$ and sets $p_i = \frac{1}{k}$ for $i \in S$ and $0$ otherwise, then $\Tp$ is a randomly-partitioned median of means estimator;
\item if $\Pp$ is $\Dir_n(\alpha, \ldots, \alpha)$, then the resulting estimator is the \emph{Bayesian median of means}, $\BMM$. 
\end{itemize}

Hence, estimators with randomized weights give a way to interpolate between the sample mean, with low bias and possibly high variance, and the sample median, with low variance but possibly high bias. See Figure \ref{fig:median_to_mean}. The two extreme choices of $\Pp$ leading to the sample mean and sample median come from degenerate Dirichlet distributions placing all mass at the centroid of the simplex ($\alpha\to0$), or splitting the mass equally among the vertices of the simplex ($\alpha\to\infty$). More general distributions over the simplex are possible, and many of the results in this paper can readily be extended to that case (see \cite{newton1994approximate} and \cite{pitman2018random}). In particular, distributions can be picked to encode any prior information available about the sample $\hat{\theta}_1, \ldots, \hat{\theta}_n$ such as skewness or symmetry.

\begin{figure}
	\begin{center}
		\vspace{1em}
		{\fontfamily{cmss}\selectfont \textbf{Distribution of $\BMM$}}
		\includegraphics[width=\textwidth]{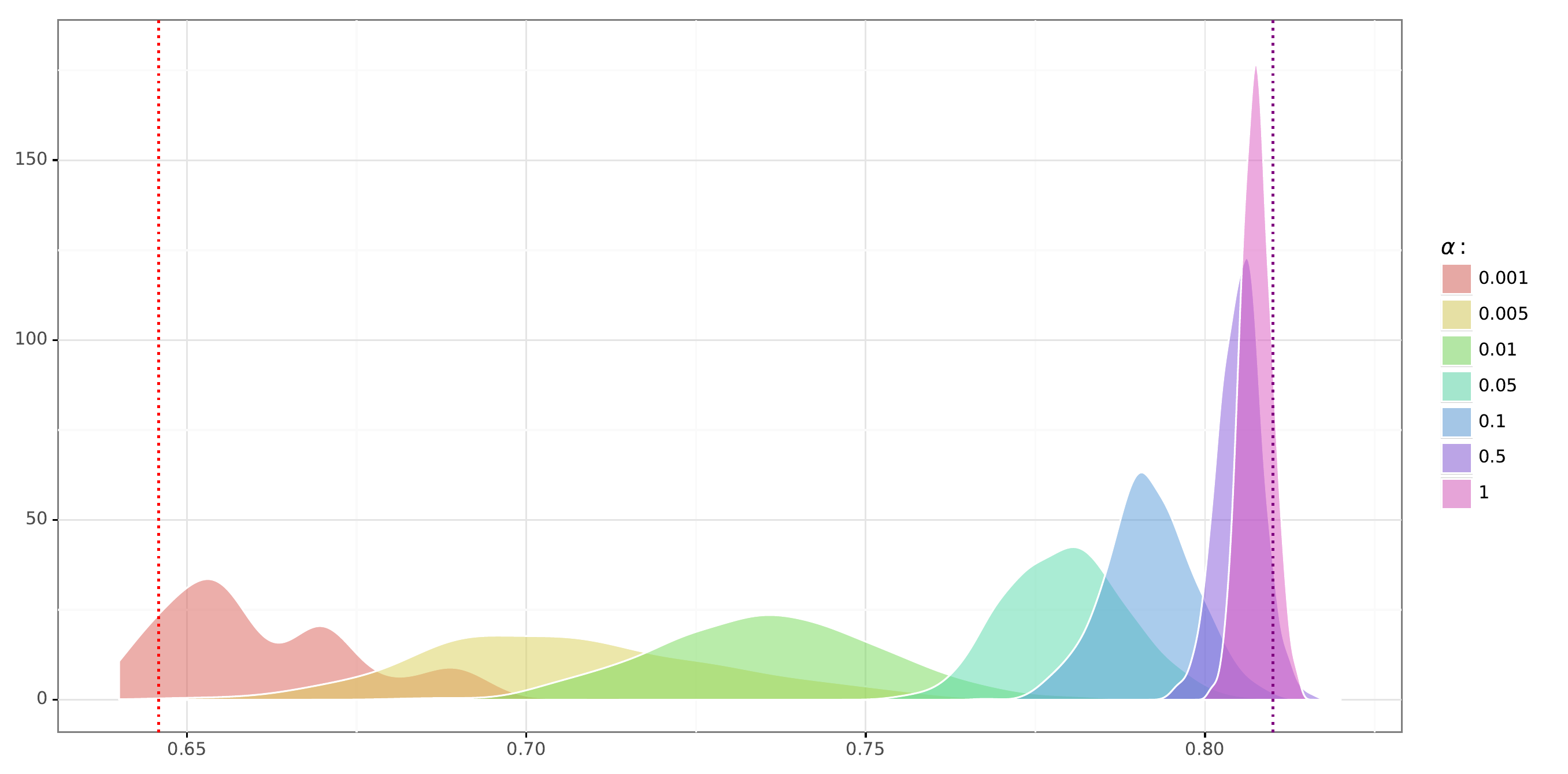}
		\caption{Distribution of $\BMM$ for different levels of $\alpha$ over many draws $\mathbf{p}^{(j)}$, $j=1, \ldots, 1000$, and fixed $\hat{\theta}_1, \ldots, \hat{\theta}_n$; $\med(\hat{\bm{\theta}})$ and $\overline{\theta}$ are shown in dotted red and purple lines, respectively.}
		\label{fig:median_to_mean}
	 \end{center}
\end{figure}

The reduction in variance achieved by the median of weighted means can have a drastic effect on mean squared error when $\hat{\theta}_1, \ldots, \hat{\theta}_n$ come from a distribution with high variance. This can be understood through the following proposition.

\begin{proposition}
Let $\hat{\theta}_1, \ldots, \hat{\theta}_n$ be iid, unbiased estimates of a parameter $\theta \in \R$. Let $\SM=\frac{1}{n}\sum_{i=1}^{n}\hat{\theta}_i$ be the sample mean, and $\Tp$ be any median of weighted means estimator (\ref{eq:step2:median}). Then the mean squared error of $\Tp$ can be bounded by
\begin{align*}
\E[(\Tp-\theta)^2] &\leq \E[(\Tp-\SM)^2] - \V[\SM]\left(1-2\sqrt{\frac{\V[\Tp]}{\V[\SM]}} \right),
\end{align*}
where the expectation is over both the data $\{\hat{\theta}_i\}_{i=1}^n$ and the weights $\{\mathbf{p}^{(j)}\}_{j=1}^J$.
\end{proposition}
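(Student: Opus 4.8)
The plan is to reduce the statement to the Cauchy--Schwarz inequality applied to $\Tp$ and the sample mean $\SM$. First I would add and subtract $\SM$ inside the square and expand:
$$\E[(\Tp-\theta)^2] = \E[(\Tp-\SM)^2] + 2\,\E[(\Tp-\SM)(\SM-\theta)] + \E[(\SM-\theta)^2],$$
where every expectation is over the joint law of the data $\{\hat{\theta}_i\}_{i=1}^n$ and the resampling weights $\{\mathbf{p}^{(j)}\}_{j=1}^J$. Since the $\hat{\theta}_i$ are unbiased, $\E[\SM]=\frac{1}{n}\sum_{i=1}^n\E[\hat{\theta}_i]=\theta$, so the last term equals $\V[\SM]$ exactly.

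Next I would rewrite the cross term using that $\SM$ has mean $\theta$: $\E[\Tp(\SM-\theta)]=\Cov[\Tp,\SM]$ and $\E[\SM(\SM-\theta)]=\V[\SM]$, hence $\E[(\Tp-\SM)(\SM-\theta)]=\Cov[\Tp,\SM]-\V[\SM]$. Substituting both simplifications back gives
$$\E[(\Tp-\theta)^2] = \E[(\Tp-\SM)^2] + 2\,\Cov[\Tp,\SM] - \V[\SM],$$
so the claimed bound is equivalent to $\Cov[\Tp,\SM]\leq\sqrt{\V[\Tp]\,\V[\SM]}$. This last inequality is just $|\Cov[\Tp,\SM]|\leq\sqrt{\V[\Tp]\,\V[\SM]}$, i.e. Cauchy--Schwarz for the centered variables $\Tp-\E[\Tp]$ and $\SM-\theta$, which closes the argument.

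I do not expect a genuine obstacle here; the proof is a one-line expansion plus Cauchy--Schwarz. The only points to keep straight are that the expectation lives on the product space, so that $\SM$ (a function of the data alone) stays unbiased while $\Tp$ may depend on both sources of randomness, and the tacit assumption $\V[\Tp]<\infty$ needed for Cauchy--Schwarz --- when $\V[\Tp]=\infty$ the right-hand side is $+\infty$ and the inequality is vacuous.
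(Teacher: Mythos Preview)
Your proof is correct and essentially identical to the paper's: both add and subtract $\SM$, expand, identify the cross term as $\Cov(\Tp,\SM)-\V[\SM]$, and then bound the covariance. The only cosmetic difference is that the paper writes $\Cov(\Tp,\SM)=\rho_{\Tp,\SM}\sqrt{\V[\Tp]\V[\SM]}$ and uses $\rho_{\Tp,\SM}\le 1$, whereas you invoke Cauchy--Schwarz directly; these are the same step.
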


\begin{proof}
Decompose the expectation as 
\begin{equation*}
\E[(\Tp-\theta)^2] = \E[(\Tp - \SM +\SM - \theta)^2] = \E\left[(\Tp-\SM)^2\right] + \E\left[(\SM-\theta)^2\right] + 2 \E\left[(\Tp-\SM)(\SM-\theta)\right],
\end{equation*}
and note the cross-term can be written
\begin{align*}
\E\left[(\Tp - \SM)(\SM-\theta)\right] &= \E\left[(\Tp-\theta)(\SM-\theta)\right] + \E\left[(\theta-\SM)(\SM-\theta)\right]\\
&= \E\left[(\Tp-\E[\Tp])(\SM-\theta)\right] - \E\left[(\SM-\theta)^2\right] \\
&= \Cov(\Tp, \SM) - \V[\SM]\\
&= \V[\SM]\left(\rho_{\Tp, \SM} \sqrt{\frac{\V[\Tp]}{\V[\SM]}} - 1\right),
\end{align*}
where $\rho_{\Tp, \SM}$ is the correlation between $\SM$ and $\Tp$. Putting it together,
\begin{equation}
\E\left[(\Tp-\theta)^2\right] = \E[(\SM-\theta)^2] + \E\left[(\Tp-\SM)^2\right] - 2 \V[\SM]\left(1-\rho_{\Tp, \SM} \sqrt{\frac{\V[\Tp]}{\V[\SM]}}\right), \label{eq:MSE_Tp_MSE_SM}
\end{equation}
and since $\SM$ is unbiased, $\E[(\SM-\theta)^2]=\V[\SM]$, so
\begin{align*}
\E\left[(\Tp-\theta)^2\right] &=  \E[(\Tp-\SM)^2] - \V[\SM]\left(1-2\rho_{\Tp, \SM} \cdot \sqrt{\frac{\V[\Tp]}{\V[\SM]}} \right)\\
&\leq  \E[(\Tp-\SM)^2] - \V[\SM]\left(1-2 \sqrt{\frac{\V[\Tp]}{\V[\SM]}} \right) \qedhere.
\end{align*}
\end{proof}

Thus, the mean squared error of $\Tp$ is given by a term measuring the discrepancy between $\Tp$ and $\SM$, minus a term measuring the variance reduction achieved by $\Tp$ with respect to $\SM$. If $\hat{\theta}_1, \ldots, \hat{\theta}_n$ are coming from a distribution with high variance, then generally $\V[\Tp] \ll \V[\SM]$, and the second term in the right-hand side of the bound is negative and very large. If, on the other hand, $\hat{\theta}_1, \ldots, \hat{\theta}_n$ are coming from a distribution with low variance, then $\Tp \approx \SM$, and there should be no significant differences between the mean squared errors of the two estimators.

\section{Bayesian Median of Means} \label{sec:bayesian_median_of_means}

Now consider the median of weighted means estimator obtained by sampling the probabilities in (\ref{eq:step1:sample}) from a $\Dir_n(\alpha, \ldots, \alpha)$ distribution. Recall this scheme is broad enough to interpolate between median and mean, while still being analytically tractable, and is given by
\begin{enumerate}[leftmargin=4\parindent]
\item draw $\mathbf{p}^{(j)} \sim \Dir_n(\alpha, \ldots, \alpha)$ for $j=1, \ldots, J$;
\item compute $Y_j = \sum_{i=1}^{n} p_{i}^{(j)}\hat{\theta}_i$, for $j=1, \ldots, J$; 
\item estimate $\BMM = \widhat{\med}(Y_1, \ldots, Y_j)$.
\end{enumerate}

This estimator is `Bayesian' in the sense that the probabilities $\mathbf{p}^{(j)}$ are being generated according to the Bayesian bootstrap. Indeed, consider weights $\mathbf{w}=(w_1, \ldots, w_n)$ with $\sum_{i=1}^{n} w_i=n$, $w_i\geq 0$ and $\mathbf{p}=\frac{1}{n}\mathbf{w}$, and assume the following underlying model
\begin{align*}
\mathbf{p} &\sim \Dir_n(\gamma, \ldots, \gamma)\\
\mathbf{w} \mid \mathbf{p} &\sim \text{Mult}_n(m, \mathbf{p}),
\end{align*}
so the posterior distribution is
\begin{align*}
\mathbf{p}\mid \mathbf{w} &\sim \Dir_n\left(\gamma + \frac{m}{n}\mathbf{w}\right). 
\end{align*}
With a non-informative prior, $\gamma \to 0$, the posterior distribution becomes $\Dir_n(m/n, \ldots, m/n)$, which amounts to step 1 above with $\alpha=m/n$. In particular, if $m=n$, it is $\Dir_n(1, \ldots, 1)$. 

This gives a posterior on the sums $\sum_{i=1}^{n}p_i^{(j)}\hat{\theta}_i$ for a randomly sampled probability vector $\mathbf{p}^{(j)}$ and fixed $\hat{\theta}_1, \ldots, \hat{\theta}_n$. Summarizing the posterior distribution by minimizing the $l_1$ loss for robustness yields the Bayesian median of means.

Note the usual bootstrap would sample $\mathbf{p}^{(j)}\sim\text{Mult}_n(m, (1/n, \ldots, 1/n))$. This has the same mean as $\Dir_n(m/n, \ldots, m/n)$, and nearly the same variance. The main reason for using the Dirichlet distribution is that it confers additional smoothness to the estimator that are important in establishing theoretical results, in particular asymptotic expansions (see Section \ref{sec:asymptotic_approximation}).

\vspace{5mm}

How can this estimator improve on the sample mean? Proposition \ref{prop:aBMM} below shows that, under some regularity assumption and assuming $J=O(n)$, one can approximate, 
\begin{equation*}
 \BMM = \SM - \frac{1}{3} \frac{\sqrt{s^2_{\hat{\bm{\theta}}}}}{n\alpha+2}\widhat{\skew}(\hat{\bm{\theta}}) + o\left(\frac{1}{n\alpha}\right),
\end{equation*}
where $s^2_{\hat{\bm{\theta}}}=\frac{1}{n}\sum_{i=1}^{n}(\hat{\theta}_i - \SM)^2$ and $\widhat{\skew}(\hat{\bm{\theta}}) = \frac{1}{(s^2_{\hat{\bm{\theta}}})^{3/2}} \cdot \frac{1}{n}\sum_{i=1}^{n}(\hat{\theta_i}- \SM)^3$, so to first-order the Bayesian median of means is a corrected sample mean, with the correction proportional to the sample standard deviation and skewness. This is reminiscent of a shrinkage-type estimator.

Indeed, if $\hat{\theta}_1, \ldots, \hat{\theta}_n$ are coming from a symmetric distribution, then $\BMM \approx \SM$, so consider the case where the underlying distribution is very right-skewed, as in the first plot in Figure \ref{fig:intuition_BMM}. Assume two samples of $n$ points are obtained, as represented in red and blue in the figure. Note the blue sample happens to include a very large, but unlikely, sample point. 

\begin{figure}[htbp]
 \begin{center}
	 {\fontfamily{cmss}\selectfont \textbf{Typical behavior of $\BMM$}} \includegraphics[width=\textwidth]{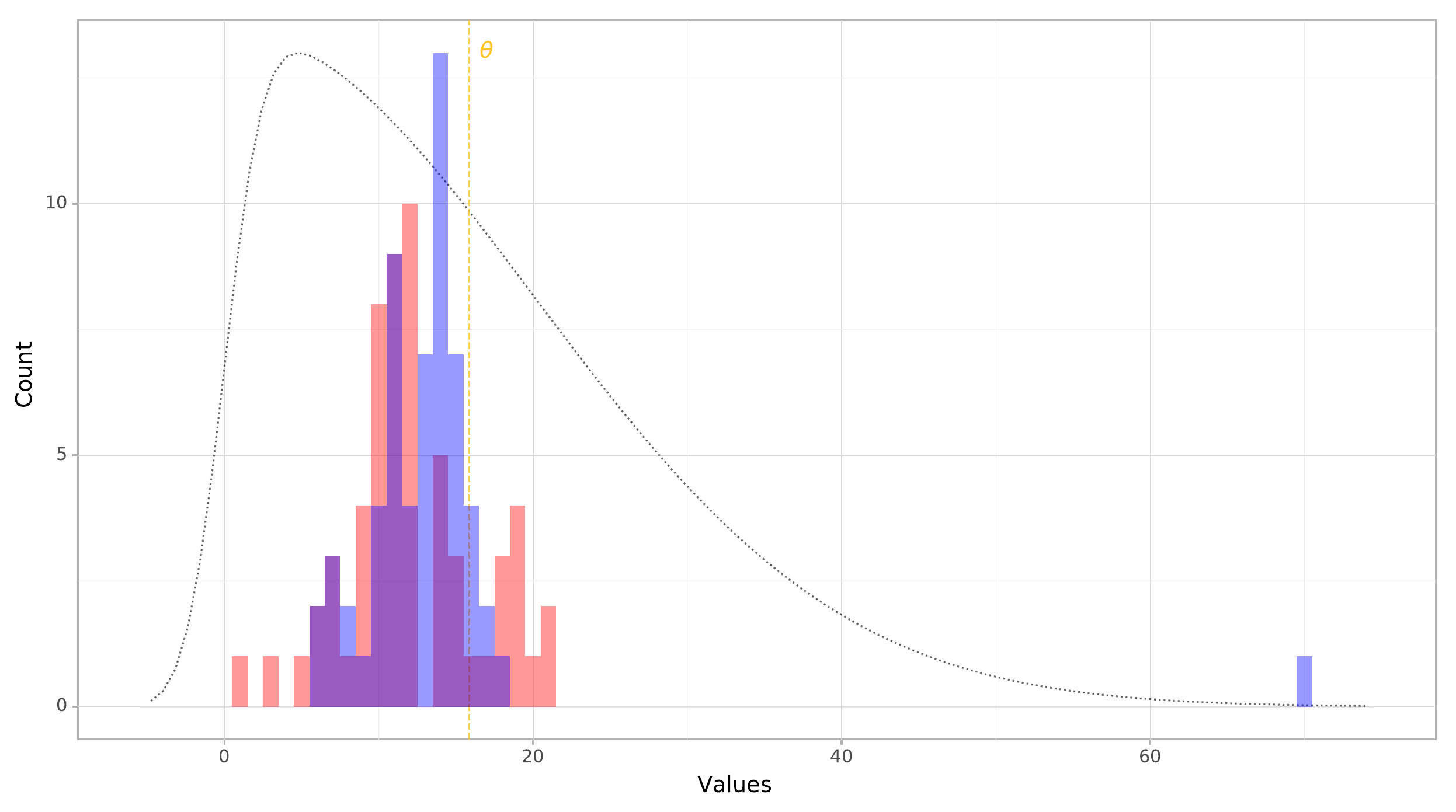}

\includegraphics[width=\textwidth]{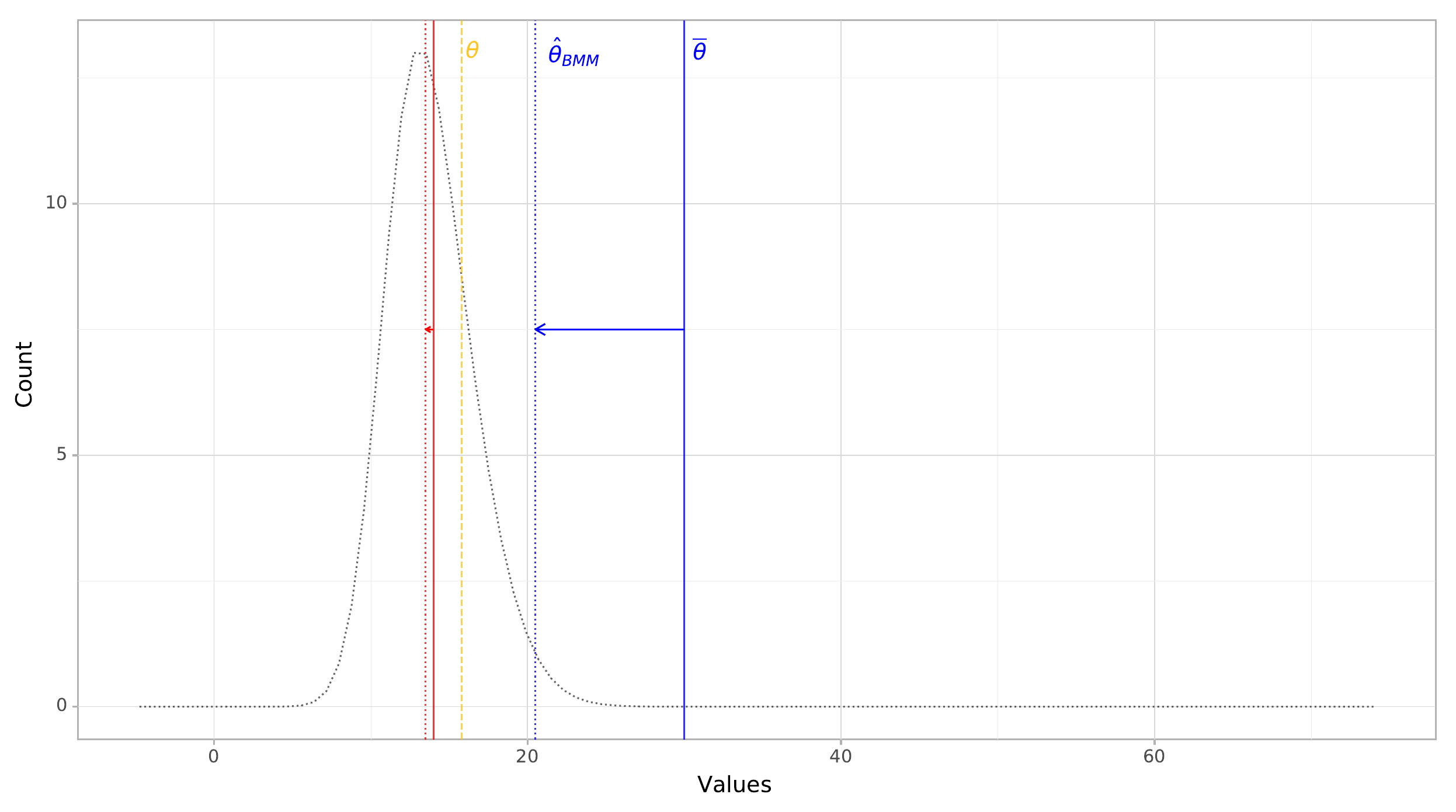}
 \caption{In the upper plot there are two samples, in red and blue, drawn from the underlying distribution of the $\hat{\theta}_i$, represented by the black dotted line; the yellow line shows the mean of the distribution. The plot below contains the distribution of resampled averages $Y_j$, represented by the black dotted line, as well as the sample mean of the blue and red samples indicated by a solid line, and the result of using $\BMM$ indicated by the dotted line.}
 \label{fig:intuition_BMM}
 \end{center}
\end{figure}

The second plot in Figure \ref{fig:intuition_BMM} shows the result of applying both the sample mean and the Bayesian median of means to the red and blue samples. Since the red sample is virtually symmetric, $\widhat{\skew}(\hat{\bm{\theta}})\approx 0$ and so $\SM \approx \BMM$. For the blue sample, the large sample point means the sample mean overestimates $\theta$ by a lot; in this case, both $s^2_{\hat{\bm{\theta}}}$ and $\widhat{\skew}(\hat{\bm{\theta}})$ are large and positive, so $\BMM < \SM$, as represented in Figure \ref{fig:intuition_BMM}. 

This schematic example explains the mechanics behind the Bayesian median of means: it introduces some bias for samples with large sample variance and skewness, and in doing so greatly reduces variance. When the underlying distribution of $\hat{\theta}_i$ has small variance or is symmetric, $\SM$ and $\BMM$ are virtually indistinguishable; however, when the underlying distribution has heavy tails or is very skewed, the estimators disagree (this phenomena was also observed in Example \ref{example:IS_intro_example_MSE}). Hence, the Bayesian median of means matches the sample mean performance for samples coming from approximately Normal distributions, but incurs in corrections once this is no longer the case.

\vspace{5mm}

To establish the theoretical properties of this estimator, Section \ref{sec:cond_moments_and_density_Y} first looks at the distribution of the resampled averages $Y_j$, $j=1, \ldots, J$. Section \ref{sec:theoretical_guarantees} establishes finite-sample guarantees for the Bayesian median of means, as well as asymptotic approximations. Finally, Section \ref{sec:choosing_alpha} considers the issue of picking a value for the hyperparameter $\alpha$.

\subsection{Conditional Moments and Density for $Y$} \label{sec:cond_moments_and_density_Y}

To understand the behavior of the Bayesian median of means, it is first necessary to study $Y_j$. In particular, it will be important to characterize the moments and distribution of $Y$ (the subscript $j$ will be dropped when the meaning is clear). Calculating the first two conditional moments of $Y$ is straightforward:
\begin{equation} \label{eq:mean}
\E[Y|\hat{\bm{\theta}}] = \E\left[\sum_{i=1}^{n}p_i \hat{\theta}_i \mid \hat{\bm{\theta}}\right] = \sum_{i=1}^{n}\E\left[p_i\right] \hat{\theta}_i  = \sum_{i=1}^{n} \left(\frac{\alpha}{n \alpha}\right) \hat{\theta}_i = \frac{1}{n} \sum_{i=1}^{n} \hat{\theta}_i = \SM,
\end{equation}
and
\begin{align}
\V&[Y|\hat{\bm{\theta}}] = \hat{\bm{\theta}}^T \V[\mathbf{p}] \hat{\bm{\theta}}= \sum_{i=1}^{n} \frac{n}{n^2(n \alpha+1)} \hat{\theta}_i^2 - \sum_{i=1}^{n} \sum_{j=1}^{n} \frac{1}{n^2 (n \alpha + 1)} \hat{\theta}_i \hat{\theta}_j \\
&= \frac{1}{(n \alpha + 1)} \left(\frac{1}{n}\sum_{i=1}^{n}\hat{\theta}_i - \left(\frac{1}{n}\sum_{i=1}^{n}\hat{\theta}_i\right)\left(\frac{1}{n}\sum_{i=1}^{n}\hat{\theta}_i\right)\right) = \frac{1}{n \alpha + 1}\left(\overline{\theta^2} - \left(\SM\right)^2\right) \\
&= \frac{1}{n \alpha + 1} s_{\hat{\bm{\theta}}}^2, \label{eq:variance}
\end{align}
where $s_{\hat{\bm{\theta}}}^2 = \frac{1}{n} \sum_{i=1}^{n}(\hat{\theta}_i - \SM)^2$, so
\begin{equation*}
\E[Y^2|\hat{\bm{\theta}}] = \V[Y|\hat{\bm{\theta}}] - \E^2[Y|\hat{\bm{\theta}}] = \frac{\overline{\theta^2} + n \alpha (\SM)^2}{n\alpha+1}.
\end{equation*}

Higher conditional moments can be found in a recursive fashion.

\begin{lemma} 
Let $\mathbf{p} \sim \Dir_n(\alpha, \ldots, \alpha)$ and $\hat{\theta}_1, \ldots, \hat{\theta}_n$ be a set of fixed of estimates. The linear combination of Dirichlet components $Y = \sum_{i=1}^{n} p_i \hat{\theta}_i$ has moments given by the recursion
\begin{equation} \label{eq:recursive_moments}
\E[Y^m | \hat{\bm{\theta}}] = \sum_{k=0}^{m-1} \left( \frac{(n\alpha+k-1)!}{(n\alpha+m-1)!}\frac{(m-1)!}{k!} \E[Y^k | \hat{\bm{\theta}}]\cdot \sum_{i=1}^{n}\alpha \hat{\theta}_i^{m-k}\right).
\end{equation}
\end{lemma}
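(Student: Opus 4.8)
The cleanest route is through the Gamma representation of the Dirichlet distribution. I would write $p_i = G_i/S$, where $G_1, \ldots, G_n \iid \text{Gamma}(\alpha,1)$ and $S = \sum_{i=1}^{n} G_i \sim \text{Gamma}(n\alpha,1)$, and invoke the standard fact that $S$ is independent of the normalized vector $(p_1, \ldots, p_n)$. Setting $Z = \sum_{i=1}^{n} G_i \hat{\theta}_i$, we then have $Y = Z/S$ with $S$ independent of $Y$ (since $Y$ is a function of the $p_i$ alone, conditionally on $\hat{\bm{\theta}}$). All the relevant moments are finite: $|Y| \leq \max_i |\hat{\theta}_i|$ bounds $Y$, and $S, Z$ inherit all moments from the Gammas. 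Hence one may split $\E[Z^m \mid \hat{\bm{\theta}}] = \E[Y^m \mid \hat{\bm{\theta}}]\,\E[S^m]$, i.e. $\E[Y^m \mid \hat{\bm{\theta}}] = \E[Z^m \mid \hat{\bm{\theta}}] / \E[S^m]$.

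Next I would assemble the two ingredients. First, $\E[S^m] = \Gamma(n\alpha+m)/\Gamma(n\alpha) = (n\alpha)(n\alpha+1)\cdots(n\alpha+m-1)$, which in the factorial notation of the statement is $(n\alpha+m-1)!/(n\alpha-1)!$. Second, because cumulants are additive over independent summands and scale as the $r$-th power under multiplication by a constant, the $r$-th cumulant of $Z$ is $\kappa_r(Z) = \sum_{i=1}^{n} \hat{\theta}_i^r\,\kappa_r(G_i) = (r-1)!\,\alpha \sum_{i=1}^{n} \hat{\theta}_i^r$, using $\kappa_r(\text{Gamma}(\alpha,1)) = \alpha(r-1)!$ (read off from the cumulant generating function $-\alpha\log(1-t)$). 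Then I apply the standard moment–cumulant recursion, $\E[Z^m] = \sum_{k=0}^{m-1}\binom{m-1}{k}\kappa_{m-k}(Z)\,\E[Z^k]$, obtained by differentiating $M_Z(t) = \exp K_Z(t)$ and matching coefficients of $t^{m-1}$.

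Finally I would substitute. Plugging $\E[Z^k \mid \hat{\bm{\theta}}] = \E[Y^k \mid \hat{\bm{\theta}}]\,\E[S^k]$ and the cumulant formula into the recursion and dividing through by $\E[S^m]$, the factors reorganize via $\binom{m-1}{k}(m-k-1)! = (m-1)!/k!$ and $\E[S^k]/\E[S^m] = (n\alpha+k-1)!/(n\alpha+m-1)!$, and the expression collapses exactly to (\ref{eq:recursive_moments}). As a sanity check, $m=1$ recovers $\E[Y\mid\hat{\bm{\theta}}] = \SM$ and $m=2$ recovers the variance formula (\ref{eq:variance}) already derived in the text.

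The only delicate point is the Gamma–Dirichlet independence $S \perp (p_1,\ldots,p_n)$, which is exactly what makes the multiplicative split $\E[Z^m] = \E[Y^m]\E[S^m]$ legitimate; once that is in place, the remainder is bookkeeping with cumulants and factorials. (An alternative but more laborious proof expands $\E[Y^m\mid\hat{\bm{\theta}}]$ directly via the Dirichlet mixed-moment formula $\E\left[\prod_i p_i^{a_i}\right] = \frac{\Gamma(n\alpha)}{\Gamma(n\alpha+\sum_i a_i)}\prod_i\frac{\Gamma(\alpha+a_i)}{\Gamma(\alpha)}$ and regroups the resulting multinomial sum, but the cumulant route avoids that combinatorics.)
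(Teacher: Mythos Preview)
Your proof is correct and takes a genuinely different route from the paper. The paper proceeds exactly via what you flag as the ``alternative but more laborious'' approach: it writes $\E[Y^m\mid\hat{\bm{\theta}}] = \sum_j \hat{\theta}_j\,\E[p_j(\sum_i p_i\hat{\theta}_i)^{m-1}]$, expands with the multinomial theorem, inserts the Dirichlet mixed-moment formula $\E[\prod_i p_i^{k_i}] = \frac{\Gamma(n\alpha)}{\Gamma(n\alpha+m)}\prod_i \frac{\Gamma(\alpha+k_i)}{\Gamma(\alpha)}$, and then peels off terms one power at a time by repeating the same manipulation on $\E[p_j(\sum_i p_i\hat{\theta}_i)^{m-2}]$, $\E[p_j(\sum_i p_i\hat{\theta}_i)^{m-3}]$, and so on until the recursion emerges.

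Your argument via the Gamma representation is cleaner: the independence $S\perp(p_1,\ldots,p_n)$ converts the problem into a moment recursion for $Z=\sum_i G_i\hat{\theta}_i$, and because $Z$ is a sum of independent pieces its cumulants are trivial, so the standard moment--cumulant identity $\mu_m=\sum_{k=0}^{m-1}\binom{m-1}{k}\kappa_{m-k}\mu_k$ does all the combinatorial work for you. The paper's direct approach has the minor advantage of being entirely self-contained (it needs only the Dirichlet moment formula, not the Gamma--Dirichlet independence or any cumulant machinery), but your route explains \emph{why} the recursion has the shape it does: the factor $\sum_i \alpha\hat{\theta}_i^{m-k}$ is literally $\kappa_{m-k}(Z)/(m-k-1)!$, and the factorial ratio $(n\alpha+k-1)!/(n\alpha+m-1)!$ is just $\E[S^k]/\E[S^m]$.
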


\begin{proof}
First, recall that if $\mathbf{p}\sim\Dir_n(\alpha, \ldots, \alpha)$, then
\begin{align}
 \E[p_1^{\beta_1} p_2^{\beta_2} \cdots p_n^{\beta_n}] &= \frac{\Gamma(n \alpha)}{(\Gamma(\alpha))^n} \int p_1^{\beta_1+\alpha-1} \cdots p_n^{\beta_n+\alpha-1} dp_1 \cdots dp_n \\
 &= \frac{\Gamma(n \alpha)}{\Gamma(n \alpha + \sum_{i=1}^{n} \beta_i)} \prod_{i=1}^{n} \frac{\Gamma(\beta_i + \alpha)}{\Gamma(\alpha)}. \label{eq:expected_dirichlet_product}
\end{align}

Using $\E[\cdot]$ below to denote the expectation conditional on $\hat{\bm{\theta}}$, write
\begin{align*}
\E\left[\left(\sum_{i=1}^{n}p_i \hat{\theta}_i\right)^m\right] &= \E\left[\left(\sum_{i=1}^{n}p_i \hat{\theta}_i\right)^{m-1}\sum_j p_j \hat{\theta}_j\right] = \sum_j \hat{\theta}_j \cdot \E\left[p_j \left(\sum_{i=1}^{n}p_i\hat{\theta}_i\right)^{m-1}\right]\\
&= \sum_j \hat{\theta}_j \E\left[\sum_{k_1+\cdots+k_n=m-1} {m-1 \choose k_1, \ldots, k_n} \left(\prod_{i=1}^{n}p_i^{k_i+\mathbb{I}_{[i=j]}}\right)\right]\\
&= \sum_{j} \hat{\theta}_j \left(\sum_{k_1+\cdots+k_n=m-1} {m-1 \choose k_1, \ldots, k_n} \left(\prod_{i=1}^{n} \hat{\theta}_i^{k_i}\right) \E\left[\prod_{i=1}^{n} p_i^{k_i + \mathbb{I}_{[i=j]} }\right]\right).
\end{align*}
Using (\ref{eq:expected_dirichlet_product}), this becomes
\begin{align*}
\E\left[\left(\sum_{i=1}^{n}p_i \hat{\theta}_i\right)^m\right] &= \sum_{j} \hat{\theta}_j \left(\sum_{k_1+\cdots+k_n=m-1} {m-1 \choose k_1, \ldots, k_n} \left(\prod_{i=1}^{n} \hat{\theta}_i^{k_i}\right) \frac{\Gamma(n\alpha)}{\Gamma(n\alpha+m)} \prod_{i=1}^{n} \frac{\Gamma(\alpha+k_i+\mathbb{I}_{[i=j]})}{\Gamma(\alpha)}\right)\\
&= \sum_{j} \hat{\theta}_j \left(\sum_{k_1+\cdots+k_n=m-1} {m-1 \choose k_1, \ldots, k_n} \left(\prod_{i=1}^{n} \hat{\theta}_i^{k_i}\right) \frac{\alpha+k_j}{n\alpha+m-1} \E\left[\prod_{i=1}^{n} p_i^{k_i}\right]\right)\\
&= \sum_j \hat{\theta}_j \alpha \frac{1}{n\alpha+m-1}\E\left[\sum_{k_1+\cdots+k_n=m-1}{m-1 \choose k_1, \ldots, k_n} \prod_{i=1}^{n}\hat{\theta}_i^{k_i}p_i^k\right] \\
& \qquad + \sum_j \hat{\theta}_j \frac{1}{n\alpha+m-1} \E\left[\sum_{k_1+\cdots+k_n=m-1} k_j {m-1 \choose k_1, \ldots, k_n} \prod_{i=1}^{n}\hat{\theta}^{k_i}p_i\right].
\end{align*}
The first term in the last equality above is just
\begin{align*}
\sum_j \hat{\theta}_j \alpha \frac{1}{n\alpha+m-1}\E\left[\sum_{k_1+\cdots+k_n=m-1}{m-1 \choose k_1, \ldots, k_n} \prod_{i=1}^{n}\hat{\theta}_i^{k_i}p_i^k\right] = \sum_j \alpha \hat{\theta}_j \frac{1}{n\alpha+m-1}\E[Y^{m-1}],
\end{align*}
while the expectation in the second term is
\begin{align*}
\E\left[\sum_{k_1+\cdots+k_n=m-1} k_j {m-1 \choose k_1, \ldots, k_n} \prod_{i=1}^{n}\hat{\theta}^{k_i}p_i\right] &=  \E\left[(m-1) p_j \hat{\theta}_j \left(\sum_{i=1}^{n}p_i \hat{\theta}_i\right)^{m-2}\right],
\end{align*}
so the second term becomes
\begin{align*}
& \sum_j \hat{\theta}_j \frac{1}{n\alpha+m-1} \E\left[\sum_{k_1+\cdots+k_n=m-1} k_j {m-1 \choose k_1, \ldots, k_n} \prod_{i=1}^{n}\hat{\theta}^{k_i}p_i\right] \\
& \qquad = \sum_j \hat{\theta}_j^2 \frac{m-1}{n\alpha+m-1} \E\left[p_j\left(\sum_{i=1}^{n}p_i \hat{\theta}_i\right)^{m-2}\right]\\
&\qquad = \sum_j \hat{\theta}_j^2 \frac{m-1}{n\alpha+m-1} \left(\frac{\alpha}{n\alpha+m-2}\E\left[Y^{m-2}\right] + \frac{m-2}{n\alpha+m-2}\E\left[p_j\left(\sum_{i=1}^{n}p_i \hat{\theta}_i\right)^{m-3}\right]\right),
\end{align*}
where the last equality follows by applying the argument above with $m-1$ instead of $m$. Putting it all together, 
\begin{align*}
\E\left[\left(\sum_{i=1}^{n}p_i \hat{\theta}_i\right)^m\right] &= \sum_j \alpha \hat{\theta}_j \frac{1}{n\alpha+m-1}\E\left[Y^{m-1}\right] \\
& \qquad + \sum_j \alpha \hat{\theta}^2_j \frac{m-1}{(n\alpha+m-1)(n\alpha+m-2)}\E\left[Y^{m-2}\right] \\
& \qquad + \sum_j \hat{\theta}^2_j \frac{(m-1)(m-2)}{(n\alpha+m-1)(n\alpha+m-2)}\E\left[p_j\left(\sum_{i=1}^{n}p_i \hat{\theta}_i\right)^{m-3}\right].
\end{align*}
Proceeding with the inductive argument, this gives
\begin{equation*}
\E\left[\left(\sum_{i=1}^{n}p_i \hat{\theta}_i\right)^m\right] = \sum_{k=0}^{m-1} \left(\sum_{j=1}^{n}\alpha \hat{\theta}_j^{m-k}\right) \frac{(n\alpha+k-1)!}{(n\alpha+m-1)!} \frac{(m-1)!}{k!} \E[Y^k],
\end{equation*}
as desired.
\end{proof}

For example, the lemma above gives
\begin{align}
\E[Y | \hat{\bm{\theta}}] &= \frac{1}{n} \sum_{i=1}^{n} \hat{\theta}_i = \SM \label{eq:first_moment} \\
\E[Y^2 | \hat{\bm{\theta}}] &= \frac{1}{n\alpha+1} \overline{\theta^2} + \frac{n\alpha}{n\alpha+1} \left(\overline{\theta}\right)^2 \label{eq:second_moment}\\
\E[Y^3 | \hat{\bm{\theta}}] &= \frac{2}{(n\alpha+2)(n\alpha+1)} \overline{\theta^3} + \frac{3 n \alpha}{(n\alpha+2)(n\alpha+1)}\overline{\theta^2}\overline{\theta} + \frac{n^2 \alpha^2}{(n\alpha+2)(n\alpha+1)} (\overline{\theta})^3, \label{eq:third_moment}
\end{align}
and it is not hard to compute higher-order moments as needed. From this, one can also obtain the unconditional moments: using the Law of Iterated Expectations,
\begin{equation} \label{eq:unconditional_mean}
 \E[Y] = \E[\E[Y | \hat{\theta}]] = \E\left[\frac{1}{n}\sum_{i=1}^n \hat{\theta}_i \right] = \theta,
\end{equation}
so $Y$ is unbiased, and by the Law of Total Variance,
\begin{align} 
\V[Y] &= \E[\V[Y | \hat{\theta}]] + \V[\E[Y | \hat{\theta}]] = \E\left[\frac{1}{n\alpha+1}s^2_{\hat{\theta}}\right]+\V\left[\frac{1}{n}\sum_{i=1}^{n}\hat{\theta}_i\right] \\
	&= \frac{1}{n\alpha+1}\E\left[\frac{1}{n}\sum_{i=1}^n (\hat{\theta}_i -\overline{\theta})^2\right] + \frac{\sigma^2}{n} = \frac{n-1}{n(n\alpha+1)}\sigma^2 + \frac{\sigma^2}{n} \\
	&= \frac{\sigma^2}{n}\left(\frac{n-1}{n\alpha+1}+1\right)= \frac{\sigma^2}{n} \frac{n(\alpha+1)}{n\alpha+1}. \label{eq:unconditional_variance}
\end{align}
Note in particular that $\V[Y] \stackrel{\alpha \to 0}{\longrightarrow} \sigma^2 = \V[\hat{\theta}_1]$ and $\V[Y] \stackrel{\alpha \to \infty}{\longrightarrow} \sigma^2/n= \V[\SM]$, as expected.

Consider now the conditional distribution of $Y$. Since it is a random mean of the $\hat{\theta}_i$, it admits an asymptotic Normal distribution concentrated around $\SM$.

\begin{proposition} \label{prop:bayesian_bootstrap_CLT}
Let $\hat{\theta}_1, \ldots, \hat{\theta} \iid [\theta, \sigma^2]$, suppose $\mathbf{p} \sim \Dir_n(\alpha, \ldots, \alpha)$, with $\alpha>0$, and $Y=\sum_{i=1}^{n}p_i \hat{\theta}_i$. If $\V[\hat{\theta}_i]=\sigma^2<\infty$, then, for almost all sequences $\hat{\theta}_1, \hat{\theta}_2, \ldots$, the conditional distribution of $\sqrt{n}(Y-\SM)$ converges to $N(0, \sigma^2/\alpha)$.
\end{proposition}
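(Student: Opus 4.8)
The plan is to recognize $Y = \sum_{i=1}^n p_i \hat\theta_i$ as a weighted mean with Dirichlet weights, rewrite the centered-and-scaled quantity in terms of the gamma representation of the Dirichlet, and then apply a triangular-array central limit theorem conditionally on the data. Concretely, I would use the representation $p_i = G_i / \sum_{j=1}^n G_j$ where $G_1, \ldots, G_n \iid \Gamma(\alpha, 1)$, so that
\begin{equation*}
\sqrt{n}(Y - \SM) = \sqrt{n}\left(\frac{\sum_{i=1}^n G_i \hat\theta_i}{\sum_{j=1}^n G_j} - \SM\right) = \frac{\sqrt{n}}{\sum_{j=1}^n G_j}\sum_{i=1}^n (G_i - \bar G)(\hat\theta_i - \SM),
\end{equation*}
where $\bar G = \frac1n\sum_j G_j$. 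Since $\frac1n\sum_j G_j \to \alpha$ almost surely (by the strong law, over the randomness of the $G_i$), the prefactor $\frac{n}{\sum_j G_j} \to 1/\alpha$, and it remains to show that $\frac{1}{\sqrt n}\sum_{i=1}^n (G_i - \bar G)(\hat\theta_i - \SM)$ converges conditionally to $N(0, \alpha \sigma^2)$ for almost every data sequence.

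For that step I would condition on $\hat{\bm\theta}$ and treat $W_i := (G_i - \alpha)(\hat\theta_i - \SM)$ as independent (not identically distributed) mean-zero random variables in $G$, with conditional variance $\Var(G_i)(\hat\theta_i - \SM)^2 = \alpha(\hat\theta_i - \SM)^2$. The average conditional variance is $\frac1n \sum_i \alpha(\hat\theta_i - \SM)^2 = \alpha s^2_{\hat{\bm\theta}} \to \alpha\sigma^2$ almost surely by the strong law applied to the $\hat\theta_i$. I would then invoke the Lindeberg–Feller CLT for the triangular array $\{W_i/\sqrt n\}_{i=1}^n$: the Lindeberg condition follows because the $\hat\theta_i$ grow slowly enough — specifically, $\max_{i\le n}(\hat\theta_i - \SM)^2 / n \to 0$ almost surely whenever $\E[\hat\theta_1^2] < \infty$ (a standard consequence of Borel–Cantelli), and $\Gamma(\alpha,1)$ has finite second moment, which together kill the truncated tail contributions. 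The difference between using $\bar G$ and $\alpha$, and between $W_i$ and $(G_i-\bar G)(\hat\theta_i-\SM)$, contributes a term of order $\sqrt n \,(\bar G - \alpha)\,\overline{(\hat\theta_i - \SM)}$; but $\overline{(\hat\theta_i - \SM)} = 0$ exactly, so this correction vanishes identically and no Slutsky bookkeeping is needed there.

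The main obstacle is making the "for almost all sequences" quantifier rigorous: one has two independent sources of randomness (the data $\hat\theta_i$ and the Dirichlet weights), and the statement is that on a probability-one set of data sequences, the conditional law of $\sqrt n(Y-\SM)$ converges. The care required is to verify that all the data-dependent quantities entering the Lindeberg–Feller hypotheses — namely $s^2_{\hat{\bm\theta}} \to \sigma^2$ and the negligibility $\frac1n\max_{i\le n}(\hat\theta_i-\SM)^2\to 0$ — hold simultaneously on a single almost-sure event, after which the CLT is a deterministic statement about each fixed sequence in that event. This is exactly the structure of the Bayesian bootstrap CLT of \cite{lo1987large}, so an alternative is simply to cite that result with the weights $\hat\theta_i$ in place of an indicator and track the $\alpha$-dependence through the gamma variance; I would mention both routes but carry out the Lindeberg–Feller argument since it is self-contained.
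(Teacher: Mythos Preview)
Your proposal is correct and follows essentially the same route as the paper: the gamma representation $p_i = G_i/\sum_j G_j$, the strong law for the denominator, and a Lindeberg--Feller CLT for the numerator conditional on the data, combined via Slutsky. The paper leaves the $G_i$ uncentered (using $\sum_i(\hat\theta_i-\SM)=0$ to get mean zero directly) and phrases the Lindeberg check as $\max_i(\hat\theta_i-\SM)^2/\sum_j(\hat\theta_j-\SM)^2\to 0$, but these are cosmetic differences from your version.
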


\begin{proof}
Since $\mathbf{p}\sim\Dir_n(\alpha, \ldots, \alpha)$, each coordinate $p_i$ can be written $p_i \stackrel{d}{=} G_i/\sum_{j=1}^{n}G_j$, where $G_j \iid \text{Gamma}(\alpha, 1)$. Thus, 
\begin{equation*}
 \sqrt{n}(Y - \SM) = \sqrt{n} \sum_{i=1}^{n} p_i (\hat{\theta}_i-\SM) = \frac{\frac{1}{\alpha\sqrt{n}} \sum_{i=1}^{n} G_i (\hat{\theta}_i - \SM)}{\frac{1}{\alpha n} \sum_{j=1}^{n}G_j}.
\end{equation*}
By the Strong Law of Large Numbers, $\frac{1}{\alpha n}\sum_{j=1}^{n}G_j \stackrel{as}{\longrightarrow} 1$. Conditioned on $\hat{\theta}_1, \ldots, \hat{\theta}_n$, $\sum_{i=1}^{n} G_i (\hat{\theta}_i - \SM)$ is a weighted sum of the $G_i$. Note
\begin{align*}
 \E\left[\sum_{i=1}^{n} G_i (\hat{\theta}_i - \SM) \mid \hat{\bm{\theta}}\right] &= \alpha \sum_{i=1}^{n} (\hat{\theta}_i - \SM) = 0. \\
 \V\left[\sum_{i=1}^{n} G_i (\hat{\theta}_i - \SM) \mid \hat{\bm{\theta}} \right] &= \sum_{i=1}^{n} (\hat{\theta}_i - \SM)^2 \V[G_i] = \alpha \sum_{i=1}^{n} (\hat{\theta}_i - \SM)^2.
\end{align*}
To obtain a Central Limit Theorem, it suffices to see Lindeberg's condition is implied by
\begin{equation*}
 \max_{1\leq i \leq n} \frac{(\hat{\theta}_i - \SM)^2}{\sum_{j=1}^{n}(\hat{\theta}_j - \SM)^2} =  \max_{1\leq i \leq n} \frac{\frac{1}{n} (\hat{\theta}_i - \SM)^2}{\frac{1}{n} \sum_{j=1}^{n}(\hat{\theta}_j - \SM)^2}  \stackrel{as}{\longrightarrow} 0,
\end{equation*}
and so $\frac{1}{\alpha \sqrt{n}}\sum_{i=1}^{n} G_i (\hat{\theta_i}-\SM) \Rightarrow N(0, \frac{\sigma^2}{\alpha})$ conditioned on $\hat{\theta}_1, \hat{\theta}_2, \ldots$, almost surely. An application of Slutsky's theorem then yields the proposition.
\end{proof}

Computing the exact distribution of $Y_j$, however, is more intricate. It can be done using the theory of Dirichlet means first developed by Cifarelli and Regazzini (\cite{cifarelli1990distribution}, \cite{Cifarelli_1994}). Through a connection between the Stieltjes transform of the distribution function of $Y$ and the Laplace transform of a related Gamma process, known as the Markov-Krein identity, they were able to obtain the cumulative distribution function for $Y$ in \cite{Cifarelli_1994}. 

\begin{proposition} \label{prop:density}
Consider $\mathbf{p}\sim \Dir_n(\alpha, \ldots, \alpha)$, and take $Y=\sum_{i=1}^{n}p_i\hat{\theta}_i$, with $\hat{\theta}_i$ fixed for $i=1, \ldots, n$. The cumulative distribution function of $Y$, denoted $F_{Y|\hat{\bm{\theta}}}$, is supported on $[\min_i \hat{\theta}_i, \max_i \hat{\theta}_i]$, degenerate at points $y=\hat{\theta}_1, \ldots, \hat{\theta}_n$, and absolutely continuous with respect to the Lebesgue measure as long as $\min_i \hat{\theta}_i < \max_i \hat{\theta}_i$.
For $y \neq \hat{\theta}_i$, the probability density function of $Y$ given $\hat{\theta}_1, \ldots, \hat{\theta}_n$, denoted $f_{Y|\hat{\bm{\theta}}}(y)$, can be written as:
\begin{enumerate}[label=(\roman*)]
\item if $\alpha>1/n$,
\begin{equation*}
f_{Y|\hat{\bm{\theta}}}(y) = - \frac{1}{\pi}\lim_{t \nearrow \infty} \int_{-\infty}^{y} \Im\left\{(n\alpha-1)\left(y - s + i/t\right)^{n\alpha-2} \prod_{k=1}^{n}|\hat{\theta}_k-s+i/t|^{-\alpha}\right\} ds;
\end{equation*}
furthermore, if $\alpha<1$,
\begin{equation*}
f_{Y|\hat{\bm{\theta}}}(y) = \frac{n\alpha-1}{\pi} \int_{-\infty}^{y} (y-s)^{n\alpha-2} \left(\prod_{i:\hat{\theta}_i\neq s} |\hat{\theta}_i-s|^{-\alpha}\right) \sin\left(\pi \alpha \sum_{i=1}^{n} \mathbb{I}_{[s\geq \hat{\theta}_{(i)}]} \right) ds,
\end{equation*}
where $\Im\left\{\cdot\right\}$ denotes the imaginary part of the complex number, and $\hat{\theta}_{(i)}$ the $i$-th smallest value in $\{\hat{\theta}_1, \ldots, \hat{\theta}_n\}$;
\item if $\alpha=1/n$, 
\begin{equation*}
f_{Y|\hat{\bm{\theta}}}(y) = \frac{1}{\pi} \sin\left(\pi \alpha \sum_{i=1}^{n}\mathbb{I}_{[y>\hat{\theta}_i]} \right) \prod_{i=1}^{n}|\hat{\theta}_i - y|^{-\alpha};
\end{equation*}
\item if $1/n>\alpha>0$,
\begin{align*}
f_{Y|\hat{\bm{\theta}}}(y) &= \frac{1-n\alpha}{\pi} \lim_{t \nearrow \infty} \int_{-\infty}^{t} \Im\bigg\{(n\alpha-1)(y-s+i/t)^{n\alpha-2} \\
& \hspace{11em} \left(\prod_{k=1}^{n}|\hat{\theta}_k - s + i/t|^{-\alpha}\right)
\left(\prod_{k=1}^{n}|\hat{\theta}_k-y+i/t|^{-\alpha}\right)\bigg\} ds.
\end{align*}
\end{enumerate}
\end{proposition}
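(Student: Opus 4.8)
My proof would realize $Y$ as the mean functional of a Dirichlet process and invert the transform handed to us by the Markov--Krein identity. The first, easy, step is the structural claims: since $\sum_i p_i = 1$ and $p_i \geq 0$, the value $Y=\sum_i p_i\hat\theta_i$ is a convex combination of the $\hat\theta_i$, so $Y \in [\min_i\hat\theta_i, \max_i\hat\theta_i]$ almost surely; the endpoints are attained only on the events $\{p_i = 1\}$, which are null when $\alpha>0$, so no atoms arise beyond those forced by coincidences among the $\hat\theta_i$; and when $\min_i\hat\theta_i<\max_i\hat\theta_i$ the law of $Y$ is absolutely continuous off $\{\hat\theta_1,\dots,\hat\theta_n\}$, which is why the density is asserted only for $y\neq\hat\theta_i$ (it may genuinely diverge there when $\alpha<1$). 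These facts are standard for Dirichlet means; see \cite{cifarelli1990distribution, Cifarelli_1994}.

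The analytic heart is that $(p_1,\dots,p_n)\sim\Dir_n(\alpha,\dots,\alpha)$ placed on the atoms $\hat\theta_1,\dots,\hat\theta_n$ is exactly a Dirichlet process with total mass $a:=n\alpha$ and base measure $G_0=\frac1n\sum_{k=1}^n\delta_{\hat\theta_k}$, whose mean is $Y$. The Markov--Krein (Cifarelli--Regazzini) identity then gives, for $z\notin[\min_i\hat\theta_i,\max_i\hat\theta_i]$,
\[ \E\big[(z-Y)^{-n\alpha}\mid\hat{\bm{\theta}}\big]=\exp\!\Big(-n\alpha\!\int\log(z-x)\,G_0(dx)\Big)=\prod_{k=1}^{n}(z-\hat\theta_k)^{-\alpha}. \]
For small $n$ or $\alpha$ one can check this directly from the Liouville--Dirichlet integral; e.g.\ for $n=2$, $\alpha=1$ it reduces to $\int_0^1 (pa+(1-p)b)^{-2}\,dp=(ab)^{-1}$. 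Thus the generalized, order-$n\alpha$ Stieltjes transform of $F_{Y|\hat{\bm{\theta}}}$ is the explicit function $\Phi(z)=\prod_k(z-\hat\theta_k)^{-\alpha}$, holomorphic off the interval, real on $(\max_i\hat\theta_i,\infty)$, with branch points only at the $\hat\theta_k$; rewriting $\log$ by Frullani exhibits $\Phi$ as the Laplace functional of a Gamma process, which is the route of \cite{Cifarelli_1994}.

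It remains to invert $\Phi$, and this is exactly what produces the three cases, according to whether the transform order $a=n\alpha$ exceeds, equals, or lies below $1$. When $a=1$ (case (ii)), the classical Stieltjes--Perron inversion $f_{Y|\hat{\bm{\theta}}}(y)=-\frac1\pi\lim_{\varepsilon\downarrow0}\Im\,\Phi(y+i\varepsilon)$ applies; the boundary value of $\prod_k(y-\hat\theta_k+i\varepsilon)^{-\alpha}$ has modulus $\prod_k|\hat\theta_k-y|^{-\alpha}$ and argument accruing $-\pi\alpha$ at each atom crossed, and using oddness/periodicity of $\sin$ to convert $\#\{k:\hat\theta_k>y\}$ into $\sum_i\mathbb{I}_{[y>\hat\theta_i]}$ gives the stated expression. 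When $a>1$ (case (i)), I would pass to a lower-order transform: since $\frac{d}{dz}(z-y)^{-(a-1)}=-(a-1)(z-y)^{-a}$, the function $\Psi(z)=\int(z-y)^{-(a-1)}\,dF_{Y|\hat{\bm{\theta}}}(y)$ satisfies $\Psi'=-(a-1)\Phi$, so $\Psi$ is recovered from $\Phi$ up to a constant; inverting $\Psi$ (by Stieltjes--Perron if $a=2$, by its fractional analogue otherwise, iterating if $a>2$) yields $F_{Y|\hat{\bm{\theta}}}$, and differentiating in $y$ produces the displayed formula, with the factor $(n\alpha-1)$ and exponent $n\alpha-2$ coming from $\frac{d}{dy}(y-s)^{n\alpha-1}$ and the $i/t$ being the $\varepsilon\downarrow0$ regulator. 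The sub-case $\alpha<1$ of (i) is precisely where the integrand $(y-s)^{n\alpha-2}\prod_k|\hat\theta_k-s|^{-\alpha}$ is integrable in $s$, so the limit passes inside and one reads off the real expression with $\sin$ tracking the branch. Finally, when $0<a<1$ (case (iii)), the order-$a$ transform inverts directly to $F_{Y|\hat{\bm{\theta}}}$ rather than to a primitive of the density, via the fractional Stieltjes--Perron formula; differentiating that expression in $y$ is what introduces the second product $\prod_k|\hat\theta_k-y+i/t|^{-\alpha}$ and the modified prefactor.

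The step I expect to fight with is not the branch-cut bookkeeping but the analytic legitimacy of the inversions when $a\neq1$: showing $\Phi$ (and $\Psi$) is genuinely the Stieltjes transform of a finite measure, that the Perron-type limits exist, and, above all, that $\lim_{t\to\infty}$ may be exchanged with the $s$-integral near the branch points $s=\hat\theta_k$ (and, in case (iii), $s=y$). When $n\alpha<1$ those singularities are real, uniform integrability there is delicate, and it is exactly this failure of interchange that forces the limit to sit outside the integral in the statement. Rather than redo these dominated-convergence estimates, I would invoke the inversion theorems and convergence lemmas of Cifarelli--Regazzini \cite{cifarelli1990distribution, Cifarelli_1994} and Regazzini et al.\ \cite{regazzini2002theory}, which is presumably how the authors intend the argument to run.
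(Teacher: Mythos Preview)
Your proposal is correct and is essentially the same approach the paper takes: the paper's proof consists entirely of citations, noting that degeneracy is Theorem~1 of \cite{Cifarelli_1994}, absolute continuity is Proposition~4 of \cite{regazzini2002theory}, and the density formulas are Proposition~9 of \cite{regazzini2002theory} specialized to the discrete base measure $\tilde\alpha=\sum_i\alpha\,\delta_{\hat\theta_i}$. You have simply unpacked what those results actually say---the Markov--Krein identity giving the order-$n\alpha$ Stieltjes transform, and the threefold Stieltjes--Perron-type inversion according to $n\alpha\gtreqqless 1$---before arriving at the same citations, so there is no substantive difference in route.
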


\begin{proof}
Degeneracy of the cumulative distribution function is considered in Theorem 1 of \cite{Cifarelli_1994}, while absolute continuity with respect to Lebesgue measure is established in Proposition 4 of \cite{regazzini2002theory}. The density formulas are given as Proposition 9 of \cite{regazzini2002theory}, specialized here to the case where the underlying measure of the Dirichlet process $\tilde{\alpha}$ is supported on $\{\hat{\theta}_1, \ldots, \hat{\theta}_n\}$ with $\tilde{\alpha}(\hat{\theta}_i)=\alpha$ for $i=1, \ldots, n$. The general proof can be found in \cite{regazzini2000}.
\end{proof}

\begin{example}[Behavior at the median] \label{example:density_at_median}
Consider the behavior of $Y|\hat{\bm{\theta}}$ near the median, $m=\med(Y|\hat{\bm{\theta}})$, when $\alpha=1/n$, in which case its density can be given in explicit form. From Proposition \ref{prop:density},
\begin{equation*}
f_{Y|\hat{\bm{\theta}}}(m) = \frac{1}{\pi} \sin\left(\frac{\pi}{n} \sum_{i=1}^{n} \mathbb{I}_{[m>\hat{\theta}_i]} \right) \prod_{i=1}^{n} |\hat{\theta}_i - m|^{-1/n} = C \prod_{i=1}^{n} |\hat{\theta}_i - m|^{-1/n},
\end{equation*}
where $C>0$ is a positive constant. Using the Strong Law of Large Numbers, as $n \to \infty$, 
\begin{equation*}
\prod_{i=1}^{n} |\hat{\theta}_i - m|^{-1/n} = e^{-\frac{1}{n}\sum_{i=1}^{n} \log(|\hat{\theta}_i - m|)} \stackrel{n \to \infty}{\longrightarrow} e^{-\int \log(|x-\theta|)dF_{\hat{\bm{\theta}}}(x)},
\end{equation*}
where $F_{\hat{\bm{\theta}}}$ denotes the law of $\hat{\theta}$, and the last equality uses the fact that $m \stackrel{n \to \infty}{\longrightarrow} \theta$, shown in Proposition \ref{prop:bounds_medY_theta}. Thus,
\begin{equation*}
f_{Y|\hat{\bm{\theta}}}(m) \stackrel{n \to \infty}{\longrightarrow} C e^{-\int \log(|x-\theta|) dF_{\hat{\bm{\theta}}}(x)} > 0,
\end{equation*}
giving a way to specify, at least asymptotically, $f_{Y|\hat{\bm{\theta}}}(m)$. 
\end{example}

While Proposition \ref{prop:density} determines the conditional distribution $Y|\hat{\bm{\theta}}$, directly relating the unconditional distribution of $Y_j$ to $\hat{\bm{\theta}}$ requires different techniques. It can be done, for example, via the Laplace transform of $G_{n\alpha}Y_j$, with $G_{n\alpha}\sim\text{Gamma}(n\alpha)$ independent of $Y_j$, which also uniquely determines the distribution of $Y_j$:
\begin{equation}\label{eq:dirichlet_transform}
\E\left[e^{- \lambda G_{n\alpha}Y_j}\right] = \E\left[\left(1+\lambda Y_j\right)^{-n\alpha}\right] = \left(\E[(1+\lambda \hat{\theta}_i)^{-\alpha}]\right)^n.
\end{equation}
This is proved as Proposition 29 in \cite{pitman2018random}, dating back to \cite{von1941distribution} and \cite{watson1956joint}. The following is one of the few examples where the distribution of both $\hat{\theta}_i$ and $Y_j$ are known.

\begin{example}[Beta distribution] \label{example:dirichlet_mean_of_beta}
Suppose $\hat{\theta}_i \iid \text{Beta}(a, b)$, and $\mathbf{p}^{(j)} \iid \Dir_n(a+b, \ldots, a+b)$, so $\alpha=a+b$. In this case, $\hat{\theta}_i \iid \text{Beta}(a, b)$ is also defined by its generalized Stieltjes transform $\E[(1-\lambda\hat{\theta}_i)^{-(a+b)}] = (1-\lambda)^{-a}$. Hence, (\ref{eq:dirichlet_transform}) gives
\begin{align*}
\E\left[(1+\lambda Y_j)^{-n(a+b)}\right] &= \left(\E\left[(1+\lambda\hat{\theta}_i)^{-n(a+b)}\right]\right)^n = (1+\lambda)^{-na},
\end{align*}
which in turn implies, by the Stieltjes transform characterization, that $Y_j \sim \text{Beta}(na, nb)$. In particular,
\begin{align*}
\E[Y_j] &= \frac{na}{na+nb} = \frac{a}{a+b}, \\
\V[Y_j] &= \frac{ab}{(a+b)^2(na+nb-1)},
\end{align*}
agreeing with equations (\ref{eq:unconditional_mean}) and (\ref{eq:unconditional_variance}) for $\hat{\theta}_i \iid \text{Beta}(a, b)$. Also,
\begin{equation*}
\skew(Y_j)=\frac{2n^{3/2}(b-a)\sqrt{a+b+1/n}}{n^2(a+b+2/n)\sqrt{ab}}\approx \frac{2(b-a)}{\sqrt{n}\sqrt{ab(a+b)}} \stackrel{n \to \infty}{\longrightarrow} 0,
\end{equation*}
so the $Y_j$ are more concentrated and symmetrized around its mean than the original sample $\hat{\theta}_1, \ldots, \hat{\theta}_n$. See Figure \ref{fig:Y_transform}. Further, note the goal is to estimate $\theta=\E[\hat{\theta}_i]=a/(a+b)$, while the Bayesian median of means is estimating
\begin{equation*}
\med(Y_j)  \approx \frac{a - \frac{1}{3n}}{a + b - \frac{2}{3n}},
\end{equation*}
where this approximation for the median of a Beta distribution is valid for $a, b\geq 1/n$ (see \cite{kerman2011closed}). Hence, the bias introduced in using $\BMM$ to estimate $\theta$ is fairly small.
\end{example}

\begin{figure}[!htbp]
 \begin{center}
{\fontfamily{cmss}\selectfont \textbf{Distributions of $\hat{\theta}$ and $Y$}} 

 \includegraphics[width=.8\textwidth]{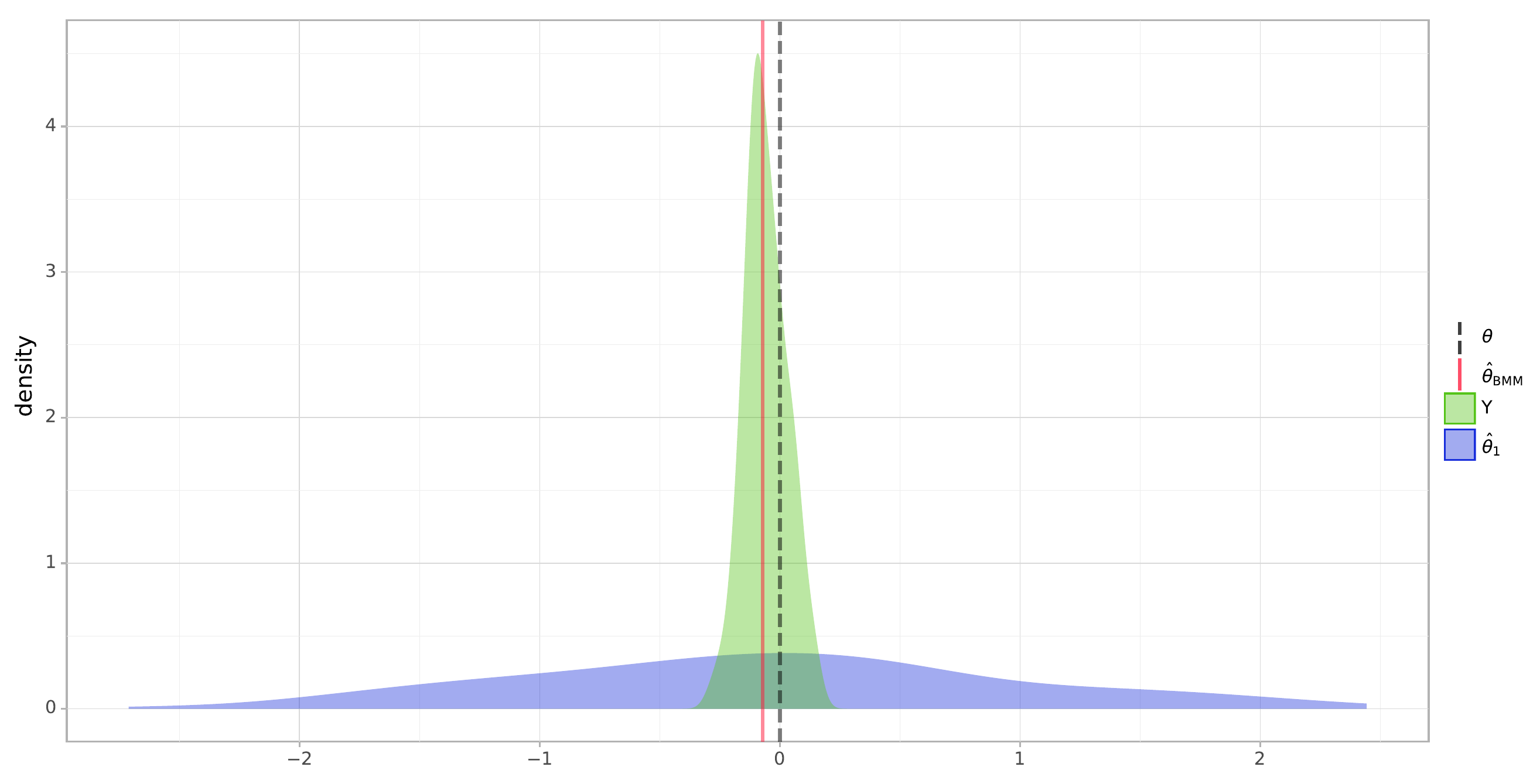}

\includegraphics[width=.8\textwidth]{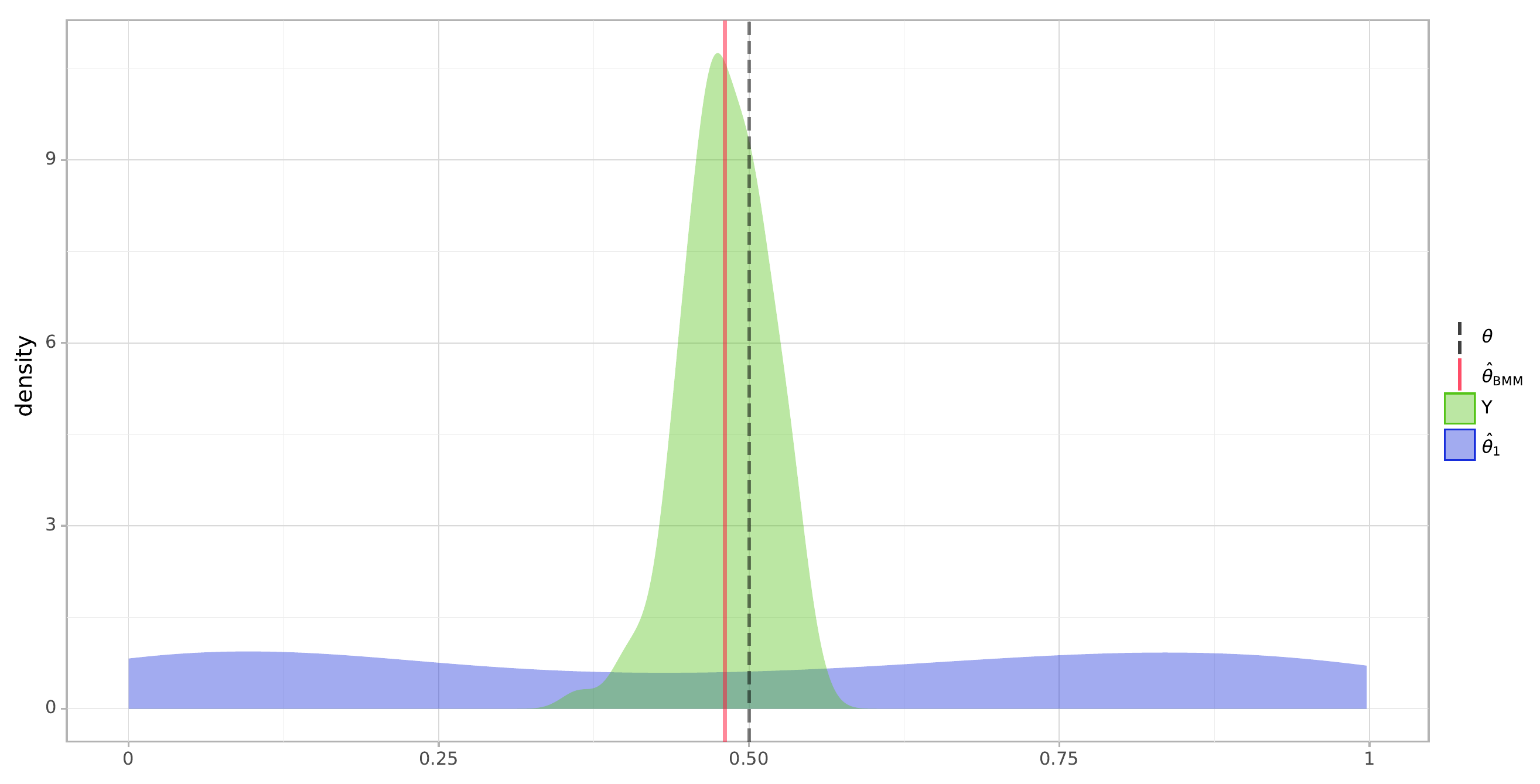}

\includegraphics[width=.8\textwidth]{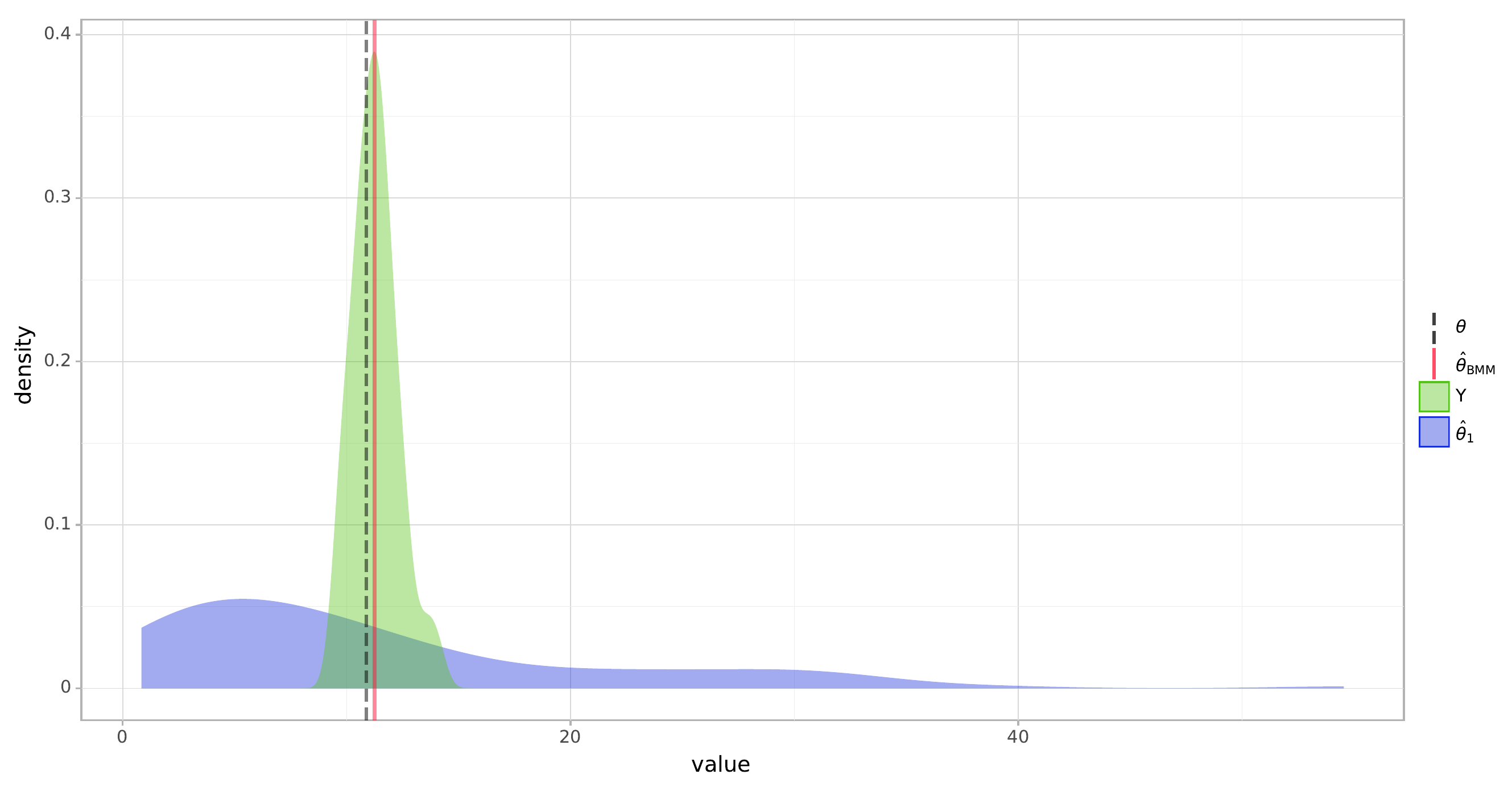}

\caption{In each figure, $\hat{\theta}_1, \ldots, \hat{\theta}_{100}$ is respectively sampled from (i) $N(0, 1)$, (ii) $\text{Beta}(\frac{1}{2}, \frac{1}{2})$ and (iii) $10\cdot \text{Exp}(1)+\text{Weibull}(1, 3)$, shown in blue. The distribution of $Y_1, \ldots, Y_{100}$ is shown in green, with the solid and dashed lines indicating $\BMM$ and $\theta$. Here, $n=J=100$, and $\alpha=1$. Note the distribution of $Y$ is close to $\theta$, even in the presence of multimodality or fat tails, with much smaller variance and skewness.}
 \label{fig:Y_transform}
 \end{center}
\end{figure}

The previous example also sheds light on the effects the Bayesian median of means procedure has on symmetric and unimodal distributions. In particular, if $\hat{\theta}_i$ comes from a symmetric distribution, then $Y$ is also symmetrically distributed (so the median adds no additional bias). 

\begin{proposition}
Let $\hat{\theta}_1, \ldots, \hat{\theta}_n \iid F_{\hat{\bm{\theta}}}$, and assume $F_{\hat{\bm{\theta}}}$ is symmetric, with density $f_{\hat{\bm{\theta}}}$, mean $\theta$ and variance $\sigma^2$. If $\mathbf{p}\sim \text{Dir}(\alpha, \ldots, \alpha)$ and $Y = \sum_{i=1}^{n} p_i \hat{\theta}_i$, then the distribution of $Y$ is also symmetric. If in addition $F_{\hat{\bm{\theta}}}$ is unimodal, then so is the distribution of $Y$. The converse is not true.
\end{proposition}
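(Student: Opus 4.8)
The plan is to dispose of symmetry first, since it is essentially a distributional identity, and then to concentrate on unimodality, which carries the real content, by conditioning on $\mathbf{p}$ and invoking a classical closure property of symmetric unimodal laws.

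For symmetry, translate so that $\theta=0$ and write $\hat{\theta}_i=\varepsilon_i$, where $\varepsilon_1,\dots,\varepsilon_n\iid F_{\hat{\bm{\theta}}}$ is symmetric about $0$. By independence of $\mathbf{p}$ from $\hat{\bm{\theta}}$ together with the sign-symmetry of the $\varepsilon_i$, the vector $(\mathbf{p},\varepsilon_1,\dots,\varepsilon_n)$ has the same law as $(\mathbf{p},-\varepsilon_1,\dots,-\varepsilon_n)$; applying the map $(\mathbf{p},\bm{\varepsilon})\mapsto\sum_i p_i\varepsilon_i$ and using $\sum_i p_i=1$ yields $Y-\theta=\sum_i p_i\varepsilon_i\stackrel{d}{=}-\sum_i p_i\varepsilon_i=-(Y-\theta)$, so $Y$ is symmetric about $\theta$.

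For unimodality, again translate so that $\theta=0$, so that each $\hat{\theta}_i$ is symmetric and unimodal about $0$, and condition on $\mathbf{p}$. Since $\alpha>0$, almost surely every $p_i>0$, so conditionally on $\mathbf{p}$ the sum $Y=\sum_i p_i\hat{\theta}_i$ is a sum of independent variables $p_i\hat{\theta}_i$, each a nonnegative rescaling of a symmetric unimodal variable and hence itself symmetric and unimodal about $0$. By the classical theorem of Wintner that a convolution of independent symmetric unimodal laws is again symmetric and unimodal, $F_{Y\mid\mathbf{p}}$ is symmetric and unimodal about $0$, i.e.\ convex on $(-\infty,0)$ and concave on $(0,\infty)$. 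The unconditional distribution function is the $\Dir_n(\alpha,\dots,\alpha)$-mixture $F_Y(y)=\int F_{Y\mid\mathbf{p}}(y)\,d\Pp(\mathbf{p})$, and convexity on $(-\infty,0)$ and concavity on $(0,\infty)$ are preserved under such averaging; hence $F_Y$ is unimodal about $0$, i.e.\ the law of $Y$ is unimodal about $\theta$. (Instead of citing Wintner one may use the Khintchine representation $\hat{\theta}_i-\theta\stackrel{d}{=}R_iU_i$ with $U_i\sim\mathrm{Unif}(-1,1)$ and $R_i\ge0$ independent, reducing the conditional step to the elementary fact that a sum of independent symmetric uniforms is symmetric unimodal.) Note the symmetry hypothesis is used precisely to force every conditional law to share the common center $0$; a mixture of unimodal laws with differing modes need not be unimodal, which is where the argument would break without it.

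For the converse, it suffices to exhibit a symmetric but non-unimodal $F_{\hat{\bm{\theta}}}$ whose $Y$ is nevertheless symmetric and unimodal: take $F_{\hat{\bm{\theta}}}$ to be the equal mixture of $\mathrm{Unif}(-2,-1)$ and $\mathrm{Unif}(1,2)$. By the forward direction $Y$ is symmetric, and for $n$ large the averaging makes $Y$ approximately $N(0,\sigma^2/(n\alpha))$ (Proposition~\ref{prop:bayesian_bootstrap_CLT}) and hence unimodal — which can also be checked directly for moderate $n$ from the density in Proposition~\ref{prop:density} — whereas $F_{\hat{\bm{\theta}}}$ has two modes. The step I expect to be the main obstacle is the unimodality argument: pinning down the symmetric-unimodal convolution theorem inside the conditioning and verifying that the $\mathbf{p}$-mixture preserves unimodality because all the conditional laws are co-centered, together with the (routine but necessary) measurability and regularity bookkeeping for the mixture integral.
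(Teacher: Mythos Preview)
Your forward-direction arguments are correct and match the paper's: both condition on $\mathbf{p}$, use the closure of symmetric unimodal laws under convolution of independent summands (the paper cites \cite{purkayastha1998simple}, you cite Wintner/Khintchine), and then average over $\mathbf{p}$. Your treatment of the mixing step is actually more explicit than the paper's, which simply says ``the unconditional case follows by taking expectations as above''; your convexity/concavity-of-the-cdf argument is exactly what justifies that line.

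The difference is in the counterexample for the converse. The paper exploits Example~\ref{example:dirichlet_mean_of_beta}: with $\hat{\theta}_i\iid\text{Beta}(\tfrac12,\tfrac12)$ and $\alpha=1$, one has exactly $Y\sim\text{Beta}(\tfrac n2,\tfrac n2)$, which is unimodal for $n>2$ while the parent law is bimodal. This is clean and closed-form. Your counterexample (a mixture of two disjoint uniforms) is correct in spirit but, as written, not fully rigorous: convergence in distribution to a normal does not by itself imply unimodality of the prelimit law at any fixed $n$, and ``can also be checked directly from Proposition~\ref{prop:density}'' is a promissory note rather than a proof. You could repair this with a local CLT or an explicit density computation, but the paper's Beta example avoids the issue entirely.
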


\begin{proof}
First, assume $\hat{\theta}_1, \ldots, \hat{\theta}_n$ is coming from a symmetric distribution, and without loss of generality take the center of symmetry to be $0$. Then, for any constant $p_i$, $p_i \hat{\theta}_i \stackrel{d}{=}-p_i\hat{\theta}_i$, and because $\hat{\theta}_1, \ldots, \hat{\theta}_n$ are iid, $\sum_{i=1}^{n}p_i \hat{\theta}_i \stackrel{d}{=} -\sum_{i=1}^{n}p_i \hat{\theta}_i$. Thus,
\begin{align*}
P[Y \leq y] &= P\left[\sum_{i=1}^{n} \hat{\theta}_i p_i \leq y\right] = \E\left[P\left[\sum_{i=1}^{n} \hat{\theta}_i p_i \leq y \mid \mathbf{p}\right]\right] = \E\left[P\left[\sum_{i=1}^{n} \hat{\theta}_i p_i > -y \mid \mathbf{p}\right]\right] = P[Y > -y],
\end{align*}
so $Y$ is also symmetric. 

Now, assume additionally that $F_{\hat{\theta}}(t)$ is unimodal. Conditional on $\mathbf{p}$, the sum of independent, symmetric and unimodal random variables is still symmetric and unimodal (see \cite{purkayastha1998simple}), so $Y$ itself is also symmetric and unimodal. The unconditional case follows by taking expectations as above.

On the other hand, note that by Example \ref{example:dirichlet_mean_of_beta}, a multimodal distribution such as $\hat{\theta}_i \iid \text{Beta}(\frac{1}{2}, \frac{1}{2})$ leads to $Y \sim \text{Beta}(\frac{n}{2}, \frac{n}{2})$, which is unimodal if $n>2$.
\end{proof}

\subsection{Theoretical Guarantees} \label{sec:theoretical_guarantees}

This section establishes the main theoretical results in the paper. Section \ref{sec:approx_BMM_with_median} characterizes the convergence of $\BMM$ to $\med(Y|\hat{\theta}_1, \ldots, \hat{\theta}_n)$, as $J\to\infty$ and for fixed $n$. Section \ref{sec:concentration_bounds} then bounds the error in estimating $\theta$ using $\med(Y|\hat{\theta}_1, \ldots, \hat{\theta}_n)$ in finite samples. Section \ref{sec:asymptotic_approximation} considers asymptotic guarantees.

\subsubsection{Approximating $\widehat{\theta}_{\text{BMM}}$ with $\med(Y\mid\widehat{\theta})$} \label{sec:approx_BMM_with_median}

The limiting distribution of $\BMM$ as $J\to\infty$ can be found via the Central Limit Theorem for medians and Proposition \ref{prop:density}.

\begin{proposition} \label{prop:medianCLT}
Let $\hat{\theta}_1, \ldots, \hat{\theta}_n \in \R$ be fixed, and consider $Y_j = \sum_{i=1}^{n} p^{(j)}_i \hat{\theta}_i$, where $\mathbf{p}^{(j)}\iid \Dir_n(\alpha, \ldots, \alpha)$ for $j=1, \ldots, J$. If $\BMM^{(J)} = \widhat{\med}(Y_1, \ldots, Y_J | \hat{\bm{\theta}})$ is the sample median and $m=\med(Y|\hat{\bm{\theta}})$ is the population median, with $m \neq \hat{\theta}_i$, $i=1, \ldots, n$ and $f_{Y|\hat{\bm{\theta}}}(m)>0$, then, as $J \to \infty$,
\begin{equation*}
 \BMM^{(J)} \Longrightarrow N\left(m, \frac{1}{4 J f^2_{Y|\hat{\bm{\theta}}}(m)}\right),
\end{equation*}
with $f_{Y|\hat{\bm{\theta}}}(y)$ given in Proposition \ref{prop:density}.
\end{proposition}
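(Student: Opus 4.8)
The plan is to invoke the classical asymptotic normality of the sample median, which states that if $Z_1, \ldots, Z_J$ are i.i.d.\ with a distribution function that is differentiable at its (unique) median $m$ with strictly positive density $f(m)$, then $\sqrt{J}(\widehat{\med}(Z_1, \ldots, Z_J) - m) \Rightarrow N(0, 1/(4 f^2(m)))$. Here the $Y_j$ play the role of the $Z_j$: conditionally on the fixed vector $\hat{\bm{\theta}}$, the $\mathbf{p}^{(j)}$ are drawn i.i.d.\ from $\Dir_n(\alpha, \ldots, \alpha)$, so the $Y_j = \sum_i p_i^{(j)} \hat{\theta}_i$ are i.i.d.\ with the common law $F_{Y|\hat{\bm{\theta}}}$ described in Proposition~\ref{prop:density}. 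So the bulk of the argument is simply checking that the hypotheses of the median CLT are met.

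First I would record the standard median CLT (citing a reference such as van der Vaart's \emph{Asymptotic Statistics}, or proving it in a line or two via the Bahadur representation or a direct argument using $P[\widehat{\med}(Y_1,\dots,Y_J) \le m + t/\sqrt{J}] = P[\mathrm{Bin}(J, F_{Y|\hat{\bm{\theta}}}(m + t/\sqrt{J})) \ge \lceil J/2\rceil]$ together with a CLT for the binomial and a Taylor expansion of $F_{Y|\hat{\bm{\theta}}}$ around $m$). Then I would verify the two regularity conditions. The first is that $m$ is an isolated median, i.e.\ $F_{Y|\hat{\bm{\theta}}}$ is not flat at level $1/2$ near $m$; this follows from Proposition~\ref{prop:density}, since whenever $\min_i \hat{\theta}_i < \max_i \hat{\theta}_i$ the law $F_{Y|\hat{\bm{\theta}}}$ is absolutely continuous on the interior of $[\min_i \hat{\theta}_i, \max_i \hat{\theta}_i]$ with a density given by the stated formula, and the hypothesis $f_{Y|\hat{\bm{\theta}}}(m) > 0$ rules out degeneracy at $m$. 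The second condition is differentiability of $F_{Y|\hat{\bm{\theta}}}$ at $m$ with derivative $f_{Y|\hat{\bm{\theta}}}(m)$, which again is immediate from absolute continuity plus continuity of the density at $m$ away from the atoms $\hat{\theta}_i$ (the assumption $m \neq \hat{\theta}_i$ guarantees $m$ is in a region where the density formula of Proposition~\ref{prop:density} applies and is continuous). The degenerate case $\min_i \hat{\theta}_i = \max_i \hat{\theta}_i$ need not be treated since then $f_{Y|\hat{\bm{\theta}}}(m)$ is not positive (indeed $Y$ is a point mass), so the hypothesis excludes it.

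Assembling these: conditionally on $\hat{\bm{\theta}}$, the $Y_j$ are i.i.d.\ from a law meeting the median CLT hypotheses, so $\sqrt{J}(\BMM^{(J)} - m) \Rightarrow N(0, 1/(4 f^2_{Y|\hat{\bm{\theta}}}(m)))$, which is exactly the claimed statement written in the non-normalized form $\BMM^{(J)} \Rightarrow N(m, 1/(4 J f^2_{Y|\hat{\bm{\theta}}}(m)))$.

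The only mild obstacle is that the density in Proposition~\ref{prop:density} is given by somewhat involved limiting integral expressions (especially in the regimes $\alpha \le 1/n$), so one should make sure that ``$f_{Y|\hat{\bm{\theta}}}(m) > 0$'' together with $m \ne \hat{\theta}_i$ genuinely pins down a point of continuity and differentiability of $F_{Y|\hat{\bm{\theta}}}$ — but this is guaranteed by the absolute continuity statement (Proposition~4 of \cite{regazzini2002theory}, cited in the proof of Proposition~\ref{prop:density}) away from the atoms, so there is no real difficulty. In short, this proposition is essentially a corollary of the median CLT applied to the conditional law identified in Proposition~\ref{prop:density}, and the write-up is mostly a matter of citing the median CLT and verifying its two hypotheses.
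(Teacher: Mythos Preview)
Your proposal is correct and takes essentially the same approach as the paper: both recognize that, conditionally on $\hat{\bm{\theta}}$, the $Y_j$ are i.i.d.\ and the result is just the classical CLT for the sample median. The paper's write-up is the brief direct argument you allude to---it applies the ordinary CLT to $W_J = \frac{1}{J}\sum_j \mathbb{I}_{[Y_j \le m]}$ and then the Delta Method through the quantile function $F_{Y|\hat{\bm{\theta}}}^{-1}$---whereas you spend comparatively more effort verifying the regularity hypotheses via Proposition~\ref{prop:density}; but the substance is the same.
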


\begin{proof}
Since conditional on $\hat{\theta}_1, \ldots, \hat{\theta}_n$, $\left\{Y_j\right\}_{j=1}^J$ are iid, defining $W_J = \frac{1}{J} \sum_{j=1}^{J} \mathbb{I}_{[Y_j \leq m]}$, the Central Limit Theorem gives $\sqrt{J}(W_J - 1/2) \Rightarrow N(0, 1/4)$. Considering the generalized inverse $F_{Y|\hat{\bm{\theta}}}^{-1}(t)$ and recalling $\frac{d}{dt}F_{Y|\hat{\bm{\theta}}}^{-1}(t)=\frac{1}{f_{Y|\hat{\bm{\theta}}}(F^{-1}_{Y|\hat{\bm{\theta}}}(t))}$, a straightforward application of the Delta Method yields $\sqrt{J}(\BMM^{(J)}-m) \Rightarrow N(0, 1/(4f^2_{Y|\hat{\bm{\theta}}}(m)))$, where $f_{Y|\hat{\bm{\theta}}}(y)$ is defined in Proposition \ref{prop:density}.
\end{proof}

It is possible to obtain better control over the fluctuations between $\BMM$ and $\med(Y|\hat{\bm{\theta}})$ via finite-sample concentration bounds. In particular, this suggests how to set $J$ as a function of $n$ to obtain concentration around the median for any small $t=O(1/n)$.

\begin{proposition} \label{prop:bound_BMM_m}
Consider $\hat{\theta}_1, \ldots, \hat{\theta}_n \in \R$, and let $Y_j = \sum_{i=1}^{n} p^{(j)}_i \hat{\theta}_i$, where $\mathbf{p}^{(j)} \iid \Dir_n(\alpha, \ldots, \alpha)$ for $j=1, \ldots, J$. Let $\BMM=\widhat{\med}(Y_1, \ldots, Y_J | \hat{\bm{\theta}})$ and assume $m=\med(Y|\hat{\bm{\theta}})$ is unique. For $t$ small enough that $\min_{[m, m+t]} f_{Y|\hat{\bm{\theta}}}(\xi) > C > 0$ for some $C=C(\hat{\theta}_1, \ldots, \hat{\theta}_n)$, it holds that
 \begin{equation*}
  P_{\mathbf{p}}\left[|\BMM - m| > t \mid \hat{\bm{\theta}} \right] \leq 2 e^{-2Jt^2C^2},
 \end{equation*}
so $\BMM$ concentrates exponentially fast.
\end{proposition}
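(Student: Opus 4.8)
The plan is to reduce the two-sided deviation bound $P_{\mathbf{p}}[|\BMM - m| > t \mid \hat{\bm\theta}]$ to a statement about the empirical CDF of the i.i.d.\ sample $Y_1, \ldots, Y_J$ (conditionally on $\hat{\bm\theta}$) exceeding or falling short of its population counterpart at two fixed points, $m+t$ and $m-t$, and then apply a Hoeffding-type bound to the resulting Bernoulli averages. Concretely, write $F_J(y) = \frac{1}{J}\sum_{j=1}^J \ones_{[Y_j \le y]}$ for the conditional empirical CDF and $F = F_{Y|\hat{\bm\theta}}$ for the true one. The key observation is the standard order-statistic sandwich: if the sample median $\BMM$ exceeds $m+t$, then at most half the $Y_j$ lie in $(-\infty, m+t]$, i.e.\ $F_J(m+t) \le \tfrac12$; since $F(m+t) > \tfrac12$ (because the density is bounded below by $C>0$ on $[m,m+t]$, so $F(m+t) - F(m) \ge Ct$ and $F(m) = \tfrac12$ by definition of the median), this forces $F_J(m+t) - F(m+t) \le -Ct$. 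Symmetrically (using a version of the lower-bound-on-density hypothesis on $[m-t,m]$, which I'd note should be part of the assumption, or just invoke uniqueness of $m$ together with the local density lower bound), $\BMM < m - t$ forces $F_J(m-t) - F(m-t) \ge Ct$.

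The second step is to control each of these events. Conditionally on $\hat{\bm\theta}$, the variables $\ones_{[Y_j \le m+t]}$, $j = 1, \ldots, J$, are i.i.d.\ Bernoulli with mean $F(m+t)$, so $F_J(m+t)$ is an average of bounded $[0,1]$-valued i.i.d.\ random variables and Hoeffding's inequality gives $P_{\mathbf{p}}[F_J(m+t) - F(m+t) \le -Ct \mid \hat{\bm\theta}] \le e^{-2J(Ct)^2} = e^{-2Jt^2C^2}$; the analogous bound holds for the lower tail at $m-t$. A union bound over the two events then yields
\begin{equation*}
P_{\mathbf{p}}\left[|\BMM - m| > t \mid \hat{\bm\theta}\right] \le 2 e^{-2Jt^2 C^2},
\end{equation*}
as claimed.

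There is essentially no deep obstacle here; the work is in getting the order-statistic inequalities exactly right and in handling ties/atoms. The main technical point to be careful about is that $F_Y$ has atoms at the points $\hat\theta_i$ (per Proposition~\ref{prop:density}), so one wants $m \neq \hat\theta_i$ and $t$ small enough that $[m-t, m+t]$ avoids all atoms — which is exactly guaranteed by the hypothesis $\min_{[m,m+t]} f_{Y|\hat{\bm\theta}} > C > 0$ together with uniqueness of $m$, since on such an interval $F$ is absolutely continuous and strictly increasing. I would also make explicit the convention for the sample median when $J$ is even (any value between the two central order statistics), and check that the sandwich inequality $\{\BMM > m+t\} \subseteq \{F_J(m+t) \le 1/2\}$ holds under that convention; this is routine. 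The only substantive modeling remark worth flagging is that the statement as written only posits the density lower bound on $[m, m+t]$, so for the left tail one should symmetrically assume $\min_{[m-t,m]} f_{Y|\hat{\bm\theta}} > C$ (or replace $C$ by the min over $[m-t,m+t]$); I would state this in the proof.
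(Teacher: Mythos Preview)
Your proposal is correct and follows essentially the same route as the paper: reduce $\{\BMM > m+t\}$ to a binomial tail event (phrased there as $\text{Bin}(J,q_+(t)) \ge J/2$, which is your $F_J(m+t)\le \tfrac12$), apply Hoeffding, and bound $\tfrac12 - q_+(t) = \int_m^{m+t} f_{Y|\hat{\bm\theta}} \ge Ct$, then repeat symmetrically. Your flag that the left-tail step implicitly requires the density lower bound on $[m-t,m]$ (not just $[m,m+t]$) is well taken; the paper's proof handles that side by the phrase ``bounding the other term similarly,'' so the asymmetry in the stated hypothesis is indeed a minor imprecision in the original.
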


\begin{proof}
To establish concentration, split
\begin{equation} \label{eq:bound_BMM_m}
P[|\BMM-m|\geq t] = P[\BMM-m\geq t] + P[\BMM-m\leq -t],
\end{equation}
and bound each term in turn. For the first, note that conditionally on $\hat{\bm{\theta}}$ the $Y_1, \ldots, Y_J$ are independent, so the sample median $\BMM$ is bigger than $m+t$ if at least half of the $J$ points is so. Thus,
\begin{align*}
P[\BMM \geq m + t] &= P\left[\sum_{j=1}^{J} \mathbb{I}_{[Y_j \geq m+t]} \geq \frac{J}{2} \right] = P\left[\text{Bin}(J, q_+(t)) \geq \frac{J}{2}\right],
\end{align*}
where
$q_+(t) = P_{\mathbf{p}}\left[Y \geq m+t \mid \hat{\bm{\theta}}\right] < \frac{1}{2}$. The Hoeffding bound gives
\begin{equation*}
P\left[\text{Bin}(J, q_+(t)) \geq J(q_+(t) + q_+(t)\left(\frac{1}{2q_+(t)}-1\right))\right] \leq e^{-2J\left(\frac{1}{2}-q_+(t)\right)^2}.
\end{equation*}
Note the density of $Y$, $f_{Y|\hat{\bm{\theta}}}(y)$ is strictly positive in $[m, m+t]$ since the support of $Y|\hat{\bm{\theta}}$ is the convex hull of the $\hat{\theta}_i$. Thus, there exists $C>0$ such that $\min_{[m , m+t]}f_{Y|\hat{\bm{\theta}}}(\xi) > C$, and
\begin{equation*}
\frac{1}{2} - q_+(t) = P\left[Y \geq m \mid \hat{\bm{\theta}}\right] - P\left[Y \geq m+t \mid \hat{\bm{\theta}}\right] = \int_{m}^{m+t} f_{Y |\hat{\bm{\theta}}}(\xi) d \xi \geq t \cdot \min_{[m, m+t]} f_{Y|\hat{\bm{\theta}}}(\xi) > t C,
\end{equation*}
so it is possible to conclude
\begin{equation} \label{eq:BMM_med_bound}
P\left[\BMM^{(J)} \geq m+t \mid \hat{\bm{\theta}}\right] \leq e^{-2J \left(\frac{1}{2}-q_+(t)\right)^2} \leq e^{-2Jt^2C^2}.
\end{equation}
Bounding the other term in (\ref{eq:bound_BMM_m}) similarly yields the proposition.
\end{proof}

In some cases, it is possible to get an asymptotic description of the constant $C$ above.

\begin{example}[Value of $C$]
Recall from Example \ref{example:density_at_median} that when $\alpha=1/n$, if $F_{\hat{\theta}}$ denotes the cdf of $\hat{\theta}_i$ and $\tilde{C}=\frac{1}{\pi} \sin\left(\frac{\pi}{n} \sum_{i=1}^{n} \mathbb{I}_{[m>\hat{\theta}_i]} \right)$, then, as $n \to \infty$, the density at the median $m$ converges to
\begin{equation*}
	f_{Y | \hat{\bm{\theta}}}(m) \stackrel{n \to \infty}{\longrightarrow} \tilde{C} e^{- \int \log (|x-\theta| dF_{\hat{\theta}}(x))} = \frac{1}{\pi} e^{- \int \log (|x-\theta| dF_{\hat{\theta}}(x))},
\end{equation*}
so the bound in (\ref{eq:BMM_med_bound}) becomes
\begin{equation*}
P\left[\BMM^{(J)} \geq m+t\right] \leq e^{-\frac{2}{\pi^2}Jt^2 \cdot e^{-2\int \log(|x-\theta|) dF_{\hat{\theta}(x)}}}. \qedhere
\end{equation*}
\end{example}

The bound in Proposition \ref{prop:bound_BMM_m} works well for small values of $t$. For example, it ensures that $J=O(n^2)$ suffices to guarantee that, with constant probability, $|\BMM-m|$ is $O(\frac{1}{n})$ as $n \to \infty$. For large values of $t$, a different bound can yield better results.

\begin{proposition} \label{prop:bound_BMM_m_large}
Let $Y_1, \ldots, Y_J$ be iid given $\hat{\bm{\theta}}=(\hat{\theta}_1, \ldots, \hat{\theta}_n)$, and assume $m=\med(Y|\hat{\bm{\theta}})$ is unique. For $t>2\sqrt{\V[Y|\hat{\bm{\theta}}]}$,
\begin{equation*}
P\left[|\BMM-m| > t\right] \leq \left(4 \sqrt{\V[Y|\hat{\bm{\theta}}}]\right)^{J/2} \cdot \frac{1}{t^{J/2}}.
\end{equation*}
\end{proposition}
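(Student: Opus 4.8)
The plan is to follow the template of the proof of Proposition~\ref{prop:bound_BMM_m} --- reduce a deviation of the sample median $\BMM$ from the population median $m=\med(Y\mid\hat{\bm{\theta}})$ to a binomial tail and bound it --- but to replace the Hoeffding step by a crude union bound paired with a second-moment estimate, since we now want a bound that decays polynomially in $t$ rather than exponentially in $J$.

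First I would record the order-statistics observation that, conditionally on $\hat{\bm{\theta}}$, the event $\{|\BMM-m|>t\}$ forces at least $\lceil J/2\rceil$ of the iid draws $Y_1,\dots,Y_J$ to lie outside $[m-t,m+t]$: if the sample median exceeds $m+t$ then at least half of the $Y_j$ do, and symmetrically if it is below $m-t$ (for even $J$ one uses that $Y_{(\lceil J/2\rceil)}$ and $Y_{(\lceil J/2\rceil+1)}$ straddle $\BMM$). Writing $q=q(t)=P\bigl[\,|Y-m|>t\mid\hat{\bm{\theta}}\,\bigr]$, the indicators $\mathbb{I}_{[|Y_j-m|>t]}$ are iid $\text{Bernoulli}(q)$ given $\hat{\bm{\theta}}$, so a union bound over which $\lceil J/2\rceil$ of the $J$ draws are the ``bad'' ones gives
\[
P\bigl[\,|\BMM-m|>t\mid\hat{\bm{\theta}}\,\bigr]\;\le\;P\!\left[\text{Bin}(J,q)\ge\lceil J/2\rceil\right]\;\le\;\binom{J}{\lceil J/2\rceil}\,q^{\lceil J/2\rceil}\;\le\;2^{J}q^{J/2}\;=\;(4q)^{J/2},
\]
using $q\le1$, $\lceil J/2\rceil\ge J/2$, and $\binom{J}{\lceil J/2\rceil}\le 2^{J}$.

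It then remains to check that $q(t)\le \sqrt{\V[Y\mid\hat{\bm{\theta}}]}\,/\,t$ whenever $t>2\sqrt{\V[Y\mid\hat{\bm{\theta}}]}$. Since $Y$ is a convex combination of the fixed numbers $\hat{\theta}_i$ it is bounded, so Chebyshev's inequality gives $q(t)\le \E\bigl[(Y-m)^2\mid\hat{\bm{\theta}}\bigr]/t^2$, and the bias--variance identity gives $\E\bigl[(Y-m)^2\mid\hat{\bm{\theta}}\bigr]=\V[Y\mid\hat{\bm{\theta}}]+\bigl(\SM-m\bigr)^2$, recalling $\E[Y\mid\hat{\bm{\theta}}]=\SM$ from~(\ref{eq:mean}). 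Invoking the mean--median inequality $|\SM-m|\le\sqrt{\V[Y\mid\hat{\bm{\theta}}]}$ (one line: $|\E[Y]-m|\le\E|Y-m|\le\E|Y-\E[Y]|\le\sqrt{\V[Y]}$, where the middle step uses that the median minimizes $c\mapsto\E|Y-c|$), we get $\E\bigl[(Y-m)^2\mid\hat{\bm{\theta}}\bigr]\le 2\,\V[Y\mid\hat{\bm{\theta}}]$, hence $q(t)\le 2\,\V[Y\mid\hat{\bm{\theta}}]/t^2$. Finally, $t>2\sqrt{\V[Y\mid\hat{\bm{\theta}}]}$ means $2\sqrt{\V[Y\mid\hat{\bm{\theta}}]}/t\le1$, so $2\,\V[Y\mid\hat{\bm{\theta}}]/t^2\le \sqrt{\V[Y\mid\hat{\bm{\theta}}]}/t$; plugging this into $(4q)^{J/2}$ yields precisely $\bigl(4\sqrt{\V[Y\mid\hat{\bm{\theta}}]}\bigr)^{J/2}t^{-J/2}$.

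The computations are all elementary; the step that genuinely uses the hypothesis $t>2\sqrt{\V[Y\mid\hat{\bm{\theta}}]}$ is the last one, since Chebyshev's inequality centered at the median (rather than the mean) is false in general --- one really needs the mean--median inequality together with $t$ being a few standard deviations out to turn the natural $2\V/t^2$ estimate into the $\sqrt{\V}/t$ required to match the stated exponent $J/2$ and constant $4$. The only other point to verify with a little care is the even-$J$ case of the order-statistics reduction, which goes through for the convention that $\widhat{\med}$ averages the two central order statistics (and trivially for any ``lower/upper median'' convention).
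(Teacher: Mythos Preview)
Your proof is correct, but the route differs from the paper's in two places.

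First, to bound $q(t)=P\bigl[|Y-m|>t\mid\hat{\bm{\theta}}\bigr]$, the paper applies Markov's inequality to $|Y-m|$ and uses $\E[|Y-m|]\le\E[|Y-\SM|]\le\sqrt{\V[Y\mid\hat{\bm{\theta}}]}$, obtaining $q(t)\le\sqrt{\V[Y\mid\hat{\bm{\theta}}]}/t$ directly for \emph{all} $t$; you instead use Chebyshev to get $q(t)\le 2\V[Y\mid\hat{\bm{\theta}}]/t^2$ and then spend the hypothesis $t>2\sqrt{\V[Y\mid\hat{\bm{\theta}}]}$ to reduce this to $\sqrt{\V[Y\mid\hat{\bm{\theta}}]}/t$. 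The paper's first-moment argument is cleaner here and postpones the use of the hypothesis. Second, to bound the binomial tail $P[\text{Bin}(J,q)\ge J/2]$, the paper runs a Chernoff computation (which is where it needs $q<1/2$, i.e.\ $t>2\sqrt{\V[Y\mid\hat{\bm{\theta}}]}$), whereas your union bound $\binom{J}{\lceil J/2\rceil}q^{\lceil J/2\rceil}\le 2^J q^{J/2}=(4q)^{J/2}$ is more elementary and avoids the exponential-moment machinery entirely. Net effect: same final bound, with the two proofs trading off where the hypothesis enters and how much work the binomial step requires.
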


\begin{proof}
From the definition of conditional median and Jensen's inequality,
\begin{equation*}
\E\left[|Y-m|\right] \leq \E\left[|Y-\E[Y]|\right] \leq \sqrt{\E\left[(Y-\E[Y])^2\right]} = \sqrt{\V[Y|\hat{\bm{\theta}}]},
\end{equation*}
so, from Markov's inequality,
\begin{equation*}
P\left[|Y-m| \geq t \mid \hat{\bm{\theta}}\right] \leq \frac{\sqrt{\V[Y|\hat{\bm{\theta}}]}}{t}.
\end{equation*}

Now, if the sample median deviates from the conditional median, $m$, by more than $t$ then at least $J/2$ of the $Y_j$ are more that than $t$ apart from $m$. Since $Y_1, \ldots, Y_J$ are conditionally iid, this implies
\begin{equation*}
P\left[|\BMM - m| \geq t \mid \hat{\bm{\theta}}\right] \leq P\left[\text{Bin}\left(J, \frac{1}{t}\sqrt{\V[Y|\hat{\bm{\theta}}]}\right) \geq J/2\right].
\end{equation*}
To bound this probability, recall the Chernoff bound for $X \sim \text{Bin}(J, p)$ yields, for $0 \leq a \leq J(1-p)$,
\begin{align*}
P\left[X \geq Jp + a\right] \leq \inf_{t \geq 0} e^{-at -Jpt +J \ln (1+p(e^t-1))} = e^{-(Jp+a)\ln \frac{Jp+a}{Jp} - (J-Jp-a) \ln \frac{J-Jp-a}{J-Jp}}.
\end{align*}
Thus, for $t>2 \sqrt{\V[Y|\hat{\bm{\theta}}]}$,
\begin{align*}
P\left[|\BMM - m| \geq t | \hat{\bm{\theta}}\right] &\leq P\left[\text{Bin}\left(J, \frac{1}{t}\sqrt{\V[Y|\hat{\bm{\theta}}]}\right) \geq \frac{J}{t} \sqrt{\V[Y|\hat{\bm{\theta}}]}+\left(\frac{J}{2}-\frac{J}{t}\sqrt{\V[Y|\hat{\bm{\theta}}]}\right) \right]\\
&\leq e^{-\frac{J}{2} \ln\left(\frac{J/2}{(J/t)\sqrt{\V[Y|\hat{\bm{\theta}}]}}\right)- \frac{J}{2} \ln \left(\frac{J/2}{J(1-(1/t)\sqrt{\V[Y|\hat{\bm{\theta}}]})}\right)}\\
&\leq e^{-\frac{J}{2}\ln \left(\frac{t}{2\sqrt{\V[Y|\hat{\bm{\theta}}]}}\right) - \frac{J}{2}\ln\left(\frac{1}{2}\right)} = e^{-\frac{J}{2} \ln\left(\frac{t}{4\sqrt{\V[Y|\hat{\bm{\theta}}]}}\right)} \\
&=\left(4 \sqrt{\V[Y|\hat{\bm{\theta}}]}\right)^{J/2} \frac{1}{t^{J/2}},
\end{align*}
which gives suitable control over $|\BMM - m|$ for large $t$.
\end{proof}

Both Propositions \ref{prop:bound_BMM_m} and \ref{prop:bound_BMM_m_large} are useful in quantifying how big $J$ should be to control $|\BMM - m|$. An alternative perspective on how large $J$ should be comes from analyzing the asymptotic bias and variance incurred by $\BMM$ in estimating $m$. Both decrease as $O(\frac{1}{J})$, as the next proposition shows.

\begin{proposition} \label{prop:BMM_m_bias_and_variance}
Let $Y_1, \ldots, Y_J$ be conditionally iid and $m = \med(Y|\hat{\bm{\theta}})$ unique, and define $\BMM = \widhat{\med}(Y_1, \ldots, Y_J|\hat{\bm{\theta}})$. Then:
\begin{equation*}
 \E[\BMM] = m + O\left(\frac{1}{J}\right),\qquad \V[\BMM] = O\left(\frac{1}{J}\right).
\end{equation*}
In particular both the bias of $\BMM$ (in estimating $m$) and the variance decrease as $O(1/J)$.
\end{proposition}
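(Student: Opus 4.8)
The plan is to handle the bias and the variance by separate arguments, both leveraging results already established. Fix $\hat{\bm{\theta}}$ and write $F = F_{Y\mid\hat{\bm{\theta}}}$ for the conditional cdf from Proposition \ref{prop:density}; since $m \neq \hat{\theta}_i$ for all $i$ and the support of $Y\mid\hat{\bm{\theta}}$ is the interval $[\min_i\hat{\theta}_i,\max_i\hat{\theta}_i]$, the density $f_{Y\mid\hat{\bm{\theta}}}$ is continuous and strictly positive on some neighborhood $[m-\delta,m+\delta]$; set $C := \min_{[m-\delta,m+\delta]} f_{Y\mid\hat{\bm{\theta}}} > 0$.

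For the variance, I would use the layer-cake identity $\E[(\BMM-m)^2] = \int_0^\infty 2t\,P[\,|\BMM-m|>t\,]\,dt$ and split the integral at $t=\delta$. On $[0,\delta]$ Proposition \ref{prop:bound_BMM_m} applies with the uniform constant $C$, giving a contribution of at most $\int_0^\infty 4t\,e^{-2Jt^2C^2}\,dt = 1/(JC^2) = O(1/J)$. For $t>\delta$ I would use that $\BMM$ always lies in $[\min_i\hat{\theta}_i,\max_i\hat{\theta}_i]$, so $|\BMM-m|\le R := \max_i\hat{\theta}_i-\min_i\hat{\theta}_i$, together with the monotone bound $P[\,|\BMM-m|>t\,]\le P[\,|\BMM-m|>\delta\,]\le 2e^{-2J\delta^2C^2}$; this part is then at most $R^2\cdot 2e^{-2J\delta^2C^2} = o(1/J)$ (one may instead invoke Proposition \ref{prop:bound_BMM_m_large} on the far tail). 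Summing, $\V[\BMM]\le\E[(\BMM-m)^2] = O(1/J)$.

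For the bias, assume first $J=2\ell+1$ is odd (the even case is handled identically with the convention $\BMM=\tfrac12(Y_{(\ell)}+Y_{(\ell+1)})$, since $\tfrac12(U_{(\ell)}+U_{(\ell+1)})$ again has mean $\tfrac12$). Write $Y_j = F^{-1}(U_j)$ with $U_1,\dots,U_J \iid \mathrm{Unif}(0,1)$ and $F^{-1}$ the quantile function; as $F^{-1}$ is nondecreasing, $\BMM = F^{-1}(U_{(\ell+1)})$ where $U_{(\ell+1)}\sim\mathrm{Beta}(\ell+1,\ell+1)$. The key moment facts are $\E[U_{(\ell+1)}]=\tfrac12$, $\E[(U_{(\ell+1)}-\tfrac12)^2]=\tfrac{1}{4(J+2)}$, $\E[(U_{(\ell+1)}-\tfrac12)^3]=0$ by symmetry, $\E[(U_{(\ell+1)}-\tfrac12)^4]=O(J^{-2})$, and the Hoeffding bound $P[\,|U_{(\ell+1)}-\tfrac12|>\epsilon\,]\le 2e^{-2J\epsilon^2}$. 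Assuming, as holds for the Dirichlet-mean density of Proposition \ref{prop:density}, that $f_{Y\mid\hat{\bm{\theta}}}$ is $C^1$ and positive near $m$, the quantile function $F^{-1}$ is $C^2$ on a neighborhood $N$ of $\tfrac12$, so Taylor expansion gives $F^{-1}(u) = m + (F^{-1})'(\tfrac12)(u-\tfrac12) + \tfrac12(F^{-1})''(\tfrac12)(u-\tfrac12)^2 + O(|u-\tfrac12|^3)$ on $N$. Taking expectations: the linear term vanishes because $\E[U_{(\ell+1)}-\tfrac12]=0$; the quadratic term is $O(1/J)$; the cubic remainder is $O(J^{-3/2})$; and the contribution of the event $\{U_{(\ell+1)}\notin N\}$ is exponentially small in $J$ (using $\|F^{-1}\|_\infty\le\max_i|\hat{\theta}_i|<\infty$ and the Hoeffding bound). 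Hence $\E[\BMM] = m + O(1/J)$.

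I expect the main obstacle to be the bias estimate: the crude bound $|\E[\BMM]-m|\le\E|\BMM-m| = O(1/\sqrt J)$ is not sharp enough, and upgrading it to $O(1/J)$ hinges on both the exact centering $\E[U_{(\ell+1)}]=\tfrac12$ of the middle uniform order statistic (which annihilates the first-order term) and enough smoothness of $f_{Y\mid\hat{\bm{\theta}}}$ near $m$ for the second-order Taylor term to dominate; verifying the regularity and handling the even-$J$ convention carefully is the delicate part, whereas the variance bound follows almost mechanically from the concentration inequalities already in hand.
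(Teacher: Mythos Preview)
Your proposal is correct but follows a genuinely different route from the paper's proof. The paper treats bias and variance together by writing, for odd $J=2k+1$, the moments $\E[\BMM^r]$ as integrals against the order-statistic density $\frac{(2k+1)!}{(k!)^2}f_{Y\mid\hat{\bm{\theta}}}(t)[F(t)(1-F(t))]^k$, then applies Stirling's formula to the combinatorial factor and a Laplace approximation to the integral (the integrand is sharply peaked at $t=m$ since $u(1-u)$ is maximized at $u=\tfrac12$). This yields $\E[\BMM]=m+O(1/J)$ and $\V[\BMM]=\frac{1}{4Jf_{Y\mid\hat{\bm{\theta}}}^2(m)}+O(1/J^2)$ with explicit leading constants in one stroke.

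Your argument is more modular: the variance bound recycles the concentration inequalities of Propositions~\ref{prop:bound_BMM_m} and~\ref{prop:bound_BMM_m_large} via layer-cake, which is elementary and makes the dependence on earlier results transparent, though it does not by itself produce the sharp constant $\frac{1}{4Jf^2(m)}$. For the bias you use the quantile-transform (Bahadur-type) representation $\BMM=F^{-1}(U_{(\ell+1)})$ and Taylor-expand $F^{-1}$ around $\tfrac12$; the exact centering $\E[U_{(\ell+1)}]=\tfrac12$ kills the $O(J^{-1/2})$ term, and the Beta moments do the rest. This is arguably cleaner about why the bias is $O(1/J)$ rather than $O(1/\sqrt J)$, and it isolates precisely where smoothness of $f_{Y\mid\hat{\bm{\theta}}}$ enters (for $F^{-1}\in C^2$), whereas the Laplace argument bundles regularity into the approximation step. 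Either method can recover the explicit constants with a little more bookkeeping; the paper's Laplace route gets them more directly, while your route integrates better with the concentration machinery already developed in the section.
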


\begin{proof}
Assume for simplicity $J=2k+1$ so $\BMM = Y_{(k)}$. Thus,
\begin{align} \label{eq:E_BMM_r}
 \E[\BMM^r] %
	 &= \int_{-\infty}^{\infty} t^r \frac{(2k+1)!}{(k!)^2} f_{Y|\hat{\bm{\theta}}}(t) \left[F_{Y|\hat{\bm{\theta}}}(t)\left(1-F_{Y|\hat{\bm{\theta}}}(t)\right)\right]^k dt \\
	 &= \frac{(2k+1)!}{(k!)^2} \int_{-\infty}^{\infty} e^k\left(\log F_{Y|\hat{\bm{\theta}}}(t) - \log(1-F_{Y|\hat{\bm{\theta}}}(t))\right) t^r f_{Y|\hat{\bm{\theta}}}(t) dt.
\end{align}
The idea will be to approximate this integral via a Laplace approximation. First, using Stirling's formula, 
\begin{equation*}
 \frac{(2k+1)!}{(k!)^2} = 2k \left(1+\frac{1}{2k}\right) \frac{1}{\sqrt{\pi k} } 2^{2^k}\left(1 + O\left(\frac{1}{k}\right)\right) = 2^{2k+1} \sqrt{\frac{k}{\pi}} \left(1+O\left(\frac{1}{k}\right)\right).
\end{equation*}
Since $f_{Y|\hat{\bm{\theta}}}(m)>0$, $\log F_{Y|\hat{\bm{\theta}}}(t) - \log\left(1-F_{Y|\hat{\bm{\theta}}}(t)\right)$ is maximized at the conditional median, $m$. Taking $r=1$, the Laplace approximation yields
\begin{equation*}
 \int_{-\infty}^{\infty} e^k\left(\log F_{Y|\hat{\bm{\theta}}}(t) - \log(1-F_{Y|\hat{\bm{\theta}}}(t))\right) t^r f_{Y|\hat{\bm{\theta}}}(t) dt = 2^{-(2k+1)}m \sqrt{\frac{\pi}{k}} \left(1+O\left(\frac{1}{k}\right)\right).
\end{equation*}
Putting it together, 
\begin{equation*}
 \E[\BMM] = 2^{2k+1}\sqrt{\frac{k}{\pi}} \left(1+O\left(\frac{1}{k}\right)\right) 2^{-(2k+1)} m \sqrt{\frac{\pi}{k}} \left(1+O\left(\frac{1}{k}\right)\right) = m \left(1+O\left(\frac{1}{k}\right)\right),
\end{equation*}
so $\E[\BMM]-m = O(1/k)$, and the bias of $\BMM$ in estimating $m$ decreases as $O(1/J)$. In fact, a more careful approximation yields
\begin{equation*}
 \E\left[\BMM\right] = m - \frac{f'_{Y|\hat{\bm{\theta}}}(m)}{8J(f_{Y|\hat{\bm{\theta}}}(m))^3} + O\left(\frac{1}{J^2}\right).
\end{equation*}

Using $r=2$ in (\ref{eq:E_BMM_r}) leds to the following expression for the variance:
\begin{equation*}
 \V[\BMM] = \frac{1}{4J(f_{Y|\hat{\bm{\theta}}}(m))^2} + O\left(\frac{1}{J^2}\right).
\end{equation*}
Note this matches the asymptotic variance found in Proposition \ref{prop:medianCLT}. Hence, the variance of $\BMM$ also decreases as $O\left(\frac{1}{J}\right)$.
\end{proof}

In terms of variance, note that $\V[\SM]=\sigma^2/n$ while $\V[\BMM]=1/(4J(f_{Y|\hat{\bm{\theta}}}(m))^2) + O(1/J^2)$. Suppose $J=n$ Dirichlets are sampled to form $\BMM$. Then, as $n\to\infty$, $\V[\SM]>\V[\BMM]$ if $4\sigma^2 f_{Y|\hat{\bm{\theta}}}^2(m)>1$. The larger the tails of the underlying distribution of $\hat{\theta}_i$ the more $\sigma^2$ grows while $f_{Y|\hat{\bm{\theta}}}(m)$ stays the same, thereby making $\BMM$ more attractive. Of course, variance is only half of the picture. Since the bias in using $\BMM$ to estimate $m$ is of order $O(1/J)$, the next subsection considers how far apart $m$ and $\theta$ can be.

\subsubsection{Concentration bounds for $|m - \theta|$} \label{sec:concentration_bounds}

Since $\BMM$ concentrates around $m=\med(Y|\hat{\theta}_1, \ldots, \hat{\theta}_n)$, it is important to characterize how $m$ and $\theta$ differ. The results in this section show that when $\V[\hat{\theta}_i]=\sigma^2$ is small, $m$ and $\theta$ are close and so are $\BMM$ and $\SM$. 

The following mean-median inequality will prove to be extremely valuable. It guarantees that the bias in estimating the median instead of the mean is bounded by the square root of the variance. 

\begin{proposition} \label{prop:mean-median}
If $X$ is a random variable with finite variance $\sigma^2$, then
\begin{equation*}
 |\med(X)-\E[X]| \leq \sigma,
\end{equation*}
so the distance between mean and median is at most a standard deviation. Similarly, conditional on random variables $\hat{\theta}_1, \ldots, \hat{\theta}_n$, almost surely,
\begin{equation*}
|\med(X | \hat{\bm{\theta}}) - \E[X | \hat{\bm{\theta}}]| \leq \sqrt{\V[X| \hat{\bm{\theta}}]}.
\end{equation*}
\end{proposition}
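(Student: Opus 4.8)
The plan is to sandwich $|\med(X)-\E[X]|$ between two quantities using two applications of Jensen's inequality together with the $L_1$-optimality of the median. Write $\mu = \E[X]$ and $m = \med(X)$. First I would observe that, since $|\cdot|$ is convex, Jensen's inequality gives $|m - \mu| = |\E[X] - m| = |\E[X - m]| \le \E[|X - m|]$. Next I would invoke the classical fact that a median minimizes the $L_1$ loss, i.e.\ $\E[|X - m|] \le \E[|X - a|]$ for every $a \in \R$, and apply it with $a = \mu$ to obtain $\E[|X-m|] \le \E[|X - \mu|]$. Finally, since $\sqrt{\cdot}$ is concave, Jensen's inequality gives $\E[|X-\mu|] \le \sqrt{\E[(X-\mu)^2]} = \sigma$. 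Chaining the three bounds yields $|m - \mu| \le \sigma$, which is the first claim.

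For completeness I would include a one-line proof of the $L_1$-optimality of the median. For $a > m$ one checks pointwise, splitting on $X \le m$, $m < X \le a$, and $X > a$, that $|X - a| - |X - m| \ge (a - m)\big(\mathbb{I}_{[X \le m]} - \mathbb{I}_{[X > m]}\big)$; taking expectations gives $\E[|X-a|] - \E[|X-m|] \ge (a-m)\big(P[X \le m] - P[X > m]\big) \ge 0$, where the last inequality uses $a - m > 0$ together with the defining property $P[X \le m] \ge 1/2 \ge P[X > m]$ of the median. The case $a < m$ is symmetric. For the conditional statement I would run the identical argument with every expectation, probability, and median replaced by its version conditional on $\hat{\bm{\theta}}$: the pointwise inequality above still holds, and the three steps — Jensen for $|\cdot|$, $L_1$-optimality of the conditional median, and Jensen for $\sqrt{\cdot}$ — hold conditionally, yielding $|\med(X \mid \hat{\bm{\theta}}) - \E[X \mid \hat{\bm{\theta}}]| \le \sqrt{\V[X \mid \hat{\bm{\theta}}]}$ almost surely.

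I expect the only point requiring any care to be the $L_1$-optimality of the median — in particular, in the conditional case, verifying that the pointwise inequality and the median's defining property can be handled measurably in the conditioning variable so that the ``almost surely'' qualifier is justified. Everything else is a routine Jensen computation.
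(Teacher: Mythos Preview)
Your proposal is correct and follows essentially the same three-step argument as the paper: Jensen for $|\cdot|$, the $L_1$-optimality of the median applied with $a=\E[X]$, and then Jensen (Cauchy--Schwarz) to pass from $\E|X-\mu|$ to $\sigma$. Your write-up is in fact more thorough than the paper's, which simply asserts the $L_1$-optimality fact and disposes of the conditional version in one line via the conditional Jensen inequality without discussing measurability.
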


\begin{proof}
Using Jensen's inequality and the fact that the median minimizes the $L_1$ loss, 
\begin{align*}
|\med(X) - \E[X]| &= |\E[X-\med(X)]| \leq \E[|X-\med(X)|] \leq \E[|X-c|],
\end{align*}
for any $c \in \R$. Taking $c=\E[X]$,
\begin{align*}
 |\med(X) - \E[X]| &\leq \E[|X-\E[X]|] \leq \sqrt{\E[(X-\E[X])^2]} = \sigma.
\end{align*}
The conditional result follows analogously by using the conditional Jensen's inequality.
\end{proof}

This inequality can often be strengthened. For example, one could also have taken $c=0$ in the proposition above to obtain $|\med(X) - \E[X]| \leq \E[|X|]$, which sometimes yields stronger results. For unimodal distributions, the upper bound can be tightened to $\sqrt{0.6}\sigma$ (see \cite{basu1997mean}). If $X$ concentrates exponentially around the mean or median, it is also possible to obtain better results, as the next proposition shows.

\begin{proposition}
Let $X$ be a random variable with mean $\theta$ and median $m$. If there exist $a, b >0$ such that $P[|X-\med(X)|>t] \leq a e^{-\frac{t^2}{b}}$ or $P[|X-\E[X]|>t]\leq \frac{a}{2} e^{-\frac{4t^2}{b}}$, then
\begin{equation*}
 |\med(X) - \E[X]| \leq \min \left(\sqrt{ab}, a\sqrt{\pi b}/2\right).
\end{equation*}
\end{proposition}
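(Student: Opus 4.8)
The plan is to funnel everything through the single inequality $|\med(X)-\E[X]|\le\E[|X-\med(X)|]$, obtained by Jensen applied to $|\cdot|$ exactly as in the previous proposition ($|\med(X)-\E[X]|=|\E[X-\med(X)]|\le\E[|X-\med(X)|]$), together with the fact that the median minimises $c\mapsto\E[|X-c|]$, which also gives $\E[|X-\med(X)|]\le\E[|X-\E[X]|]$. By Jensen once more ($\E[|Z|]\le\sqrt{\E[Z^2]}$) the gap is likewise bounded by the corresponding second moment about either $\med(X)$ or $\E[X]$. So the entire content of the proposition reduces to estimating $\E[|X-c|]$ and $\E[(X-c)^2]$, which I would do via the tail-integral (Fubini) formulas $\E[|X-c|]=\int_0^\infty P[|X-c|>t]\,dt$ and $\E[(X-c)^2]=\int_0^\infty P[|X-c|>\sqrt s]\,ds$, and then plugging in the assumed Gaussian tail. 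Finiteness of all these moments is automatic from the sub-Gaussian tail, so the expectations are well defined.

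Under the first hypothesis, $P[|X-\med(X)|>t]\le a e^{-t^2/b}$, I would bound $\E[|X-\med(X)|]\le a\int_0^\infty e^{-t^2/b}\,dt=a\sqrt{\pi b}/2$ using the standard Gaussian integral $\int_0^\infty e^{-t^2/b}\,dt=\sqrt{\pi b}/2$, and separately $\E[(X-\med(X))^2]\le a\int_0^\infty e^{-s/b}\,ds=ab$, whence $|\med(X)-\E[X]|\le\sqrt{ab}$. Taking the smaller of the two bounds yields $|\med(X)-\E[X]|\le\min(\sqrt{ab},\,a\sqrt{\pi b}/2)$, as claimed.

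Under the second hypothesis, $P[|X-\E[X]|>t]\le\tfrac a2 e^{-4t^2/b}$, I would instead use $|\med(X)-\E[X]|\le\E[|X-\E[X]|]$ and $|\med(X)-\E[X]|\le\sqrt{\E[(X-\E[X])^2]}$, and run the same tail-integral computation (the factor $4$ in the exponent is absorbed by the substitution $u=2t/\sqrt b$): this gives $\E[|X-\E[X]|]\le a\sqrt{\pi b}/8$ and $\E[(X-\E[X])^2]\le ab/8$, both strictly sharper than what is claimed, so the bound $\min(\sqrt{ab},\,a\sqrt{\pi b}/2)$ holds a fortiori. There is no genuine obstacle here — the proof is a direct assembly of Jensen, the $L^1$-optimality of the median, and Fubini — the only points requiring care are the value of the Gaussian integral and the rescaling that accounts for the constants $4$ and $1/2$ appearing in the second tail hypothesis.
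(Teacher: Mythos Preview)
Your proof is correct. Under the first hypothesis your argument coincides with the paper's: both integrate the tail bound to control $\E[|X-\med(X)|]$ and $\E[(X-\med(X))^2]$, then feed these through Jensen and the mean--median inequality.

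Where you diverge is in handling the second hypothesis $P[|X-\E[X]|>t]\le \tfrac a2 e^{-4t^2/b}$. The paper does not treat this case by a separate computation; instead it first proves a reduction lemma---sub-Gaussian concentration about the mean with constants $(a/2,\,b/4)$ implies sub-Gaussian concentration about the median with constants $(a,b)$---and then invokes the argument already done for the first hypothesis. Your route bypasses this reduction entirely: you use $|\med(X)-\E[X]|\le\E[|X-\E[X]|]$ (via the $L^1$-optimality of the median) and $|\med(X)-\E[X]|\le\sqrt{\V[X]}$, and integrate the tail about $\E[X]$ directly. This is more elementary and, as you note, yields the sharper intermediate bounds $a\sqrt{\pi b}/8$ and $ab/8$. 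The paper's reduction, on the other hand, is a standalone structural fact (mean sub-Gaussianity $\Rightarrow$ median sub-Gaussianity with explicit constant loss) that may be of independent use, at the cost of a slightly longer argument and looser constants here.
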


\begin{proof} First, note $P[|X-\E[X]|]\leq \frac{a}{2}e^{-\frac{4t^2}{b}}$ implies $P[|X-\med(X)|>t] \leq a e^{-\frac{t^2}{b}}$. Indeed, consider two cases: (i) $t \geq 2 |\E[X]-\med(X)|$, and (ii) $t < 2 |\E[X]-\med(X)|$. For (i), note 
\begin{align*}
P[|X - \med(X)| \geq t] &\leq P[|X-\med(X)|\geq t/2+|\E[X]-\med(X)|] \\
&\leq P[|X-\E[X]| \geq t/2] \leq \frac{a}{2}e^{-\frac{t^2}{b}},
\end{align*}
using the fact that $|X-\med(X)|\leq |X-\E[X]|+|\E[X]-\med(X)|$. For (ii), by the definition of median,
\begin{align*}
\frac{1}{2} &\leq P\left[|X-\med(X)|\geq 0\right] \leq P\left[|X-\E[X]|\geq |\E[X]-\med(X)|\right] \\
&\leq a e^{-\frac{(\E[X]-\med(X))^2}{b}} \leq ae^{- \frac{t^2}{4b}},
\end{align*}
which implies $2ae^{-\frac{t^2}{4b}}\geq 1$, so the bound must hold. This proves exponential mean concentration implies exponential median concentration.	

Now, proceeding as in Proposition \ref{prop:mean-median},
 \begin{align*}
  |\med(X) - \E[X]| &\leq \E[|X-\med(X)|] = \int_{0}^{\infty} P[|X-\med(X)|>t] dt \\
  &\leq \int_{0}^{\infty} ae^{-t^2/b} dt =a\sqrt{\pi b}/2.
 \end{align*}
On the other hand,
\begin{align*}
 \V[X] &= \V[X-\med(X)] \leq \E[(X-\med(X))^2] \\
 &= \int_{0}^{\infty} P[(X-\med(X))^2>t] dt \\
 &\leq \int_{0}^{\infty} ae^{-t/b} dt = ab,
\end{align*}
so, using Proposition \ref{prop:mean-median}, 
\begin{equation*}
 |\med(X) - \E[X]| \leq \sqrt{\V[X]} \leq \sqrt{ab}.
\end{equation*}
Hence, $|\med(X) - \E[X]|\leq \min(\sqrt{ab}, a\sqrt{\pi b}/2)$.
\end{proof}

Note that taking expectation on the result of Proposition \ref{prop:mean-median} gives the following bound on the bias of the conditional median:
\begin{equation*}
|\E[X]-\E_{\hat{\bm{\theta}}}[\med(X|\hat{\bm{\theta}})]| \leq \E_{\hat{\bm{\theta}}}[|\E[X|\hat{\bm{\theta}}]-\med(X|\hat{\theta})|] \leq \E_{\hat{\bm{\theta}}}\left[\sqrt{\V[X|\hat{\bm{\theta}}]}\right] \leq \V[X].
\end{equation*}

These results provide useful bounds for the Bayesian median of means estimator.

\begin{proposition} \label{prop:bounds_medY_theta}
 Suppose $\hat{\theta}_1, \ldots, \hat{\theta}_n \iid [\theta, \sigma^2]$, $\mathbf{p} \sim \Dir_n(\alpha, \ldots, \alpha)$ and take $Y= \sum_{i=1}^{n} p_i \hat{\theta}_i$. Then,
 \begin{equation*}
  |\med(Y)- \theta| \leq \sqrt{\frac{\sigma^2}{n} \frac{n(\alpha+1)}{n\alpha+1}},
 \end{equation*}
and, conditioning on $\hat{\theta}_1, \ldots, \hat{\theta}_n$, almost surely
\begin{equation*}
 |\med(Y|\hat{\bm{\theta}}) - \SM| \leq \sqrt{\frac{1}{n\alpha+1}s^2_{\hat{\bm{\theta}}}}.
\end{equation*}
In particular, as $n \to \infty$, the unconditional median converges to $\theta$ at least as fast as $O(1/\sqrt{n})$ and the conditional median converges to $\SM$ at the same speed.
\end{proposition}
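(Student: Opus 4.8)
The plan is to apply the mean--median inequality of Proposition \ref{prop:mean-median} twice, once unconditionally and once conditionally, and then read off the moments of $Y$ that were already computed in Section \ref{sec:cond_moments_and_density_Y}. This makes the proposition essentially a corollary of earlier results, so I do not expect any genuine obstacle; the work is entirely bookkeeping.

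For the first bound, I would invoke Proposition \ref{prop:mean-median} applied to the random variable $Y$ (with randomness over both $\hat{\bm{\theta}}$ and $\mathbf{p}$), giving $|\med(Y)-\E[Y]|\leq\sqrt{\V[Y]}$. By (\ref{eq:unconditional_mean}) we have $\E[Y]=\theta$, and by (\ref{eq:unconditional_variance}) we have $\V[Y]=\frac{\sigma^2}{n}\frac{n(\alpha+1)}{n\alpha+1}$, which yields the claimed inequality directly. For the second bound, I would apply the conditional form of Proposition \ref{prop:mean-median} to $Y$ given $\hat{\bm{\theta}}$, obtaining $|\med(Y|\hat{\bm{\theta}})-\E[Y|\hat{\bm{\theta}}]|\leq\sqrt{\V[Y|\hat{\bm{\theta}}]}$ almost surely; then (\ref{eq:mean}) gives $\E[Y|\hat{\bm{\theta}}]=\SM$ and (\ref{eq:variance}) gives $\V[Y|\hat{\bm{\theta}}]=\frac{1}{n\alpha+1}s^2_{\hat{\bm{\theta}}}$, completing the second inequality.

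For the asymptotic claims, I would note that $\frac{\sigma^2}{n}\frac{n(\alpha+1)}{n\alpha+1}=\frac{\sigma^2(\alpha+1)}{n\alpha+1}=O(1/n)$ as $n\to\infty$ (with $\alpha>0$ fixed), so the right-hand side of the first bound is $O(1/\sqrt{n})$, establishing $\med(Y)\to\theta$ at rate $O(1/\sqrt n)$. For the conditional statement I would use the Strong Law of Large Numbers to conclude $s^2_{\hat{\bm{\theta}}}\to\sigma^2$ almost surely, so $\frac{1}{n\alpha+1}s^2_{\hat{\bm{\theta}}}=O(1/n)$ almost surely and hence $|\med(Y|\hat{\bm{\theta}})-\SM|=O(1/\sqrt n)$ almost surely; since $\SM\to\theta$ almost surely as well, the conditional median converges to $\theta$ at the same rate. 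The only point requiring a sentence of care is that the conditional inequality holds on a set of probability one (covered by Proposition \ref{prop:mean-median}) and that the SLLN application is legitimate because $\hat{\theta}_i$ are iid with finite variance.
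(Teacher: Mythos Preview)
Your proposal is correct and matches the paper's proof essentially verbatim: the paper applies Proposition~\ref{prop:mean-median} unconditionally with (\ref{eq:unconditional_variance}) and conditionally with (\ref{eq:variance}), exactly as you describe. Your discussion of the asymptotic rates is slightly more detailed than the paper's (which leaves those as evident from the bounds), but otherwise there is nothing to add.
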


\begin{proof}
From Proposition \ref{prop:mean-median} and (\ref{eq:unconditional_variance}),
\begin{equation*}
 |\med(Y)-\theta| = |\med(Y)-\E[Y]| \leq \sqrt{\V[Y]} =\sqrt{\frac{\sigma^2}{n} \frac{n(\alpha+1)}{n\alpha+1}},
\end{equation*}
while Proposition \ref{prop:mean-median} and (\ref{eq:variance}) imply
\begin{equation*}
 |\med(Y|\hat{\bm{\theta}})-\SM| = |\med(Y|\hat{\bm{\theta}})-\E[Y|\hat{\bm{\theta}}]| \leq \sqrt{\V[Y|\hat{\bm{\theta}}]} =\sqrt{\frac{1}{n\alpha+1}s^2_{\hat{\bm{\theta}}}}. \qedhere
\end{equation*}
\end{proof}

The results above highlights the impact $\alpha$ has on the difference between $\med(Y|\hat{\bm{\theta}})$ and $\theta$. For instance, if $\alpha = O(n)$, the bound implies a maximum bias of order $O(1/n)$. In fact, any value of $O(n^{\kappa})$, for $\kappa>0$, implies the bias in estimating the median instead of the mean is asymptotically negligible relative to the variance of $\BMM$. This, of course, comes at the expense of variance reduction, since larger $\alpha$ mean $\BMM$ becomes closer to $\SM$. In any case, they are enough to establish $\med(Y|\hat{\bm{\theta}})$ and $\SM$ are asymptotically equivalent.

\begin{corollary}
If $\hat{\theta}_1, \ldots, \hat{\theta}_n \iid [\theta, \sigma^2]$, $\mathbf{p} \sim \Dir_n(\alpha, \ldots, \alpha)$ and $Y= \sum_{i=1}^{n} p_i \hat{\theta}_i$, then $\med(Y|\hat{\bm{\theta}})-\SM \stackrel{L_2}{\longrightarrow} 0$, and $\med(Y|\hat{\bm{\theta}}) \stackrel{L_2}{\longrightarrow} \theta$. In particular, almost surely, $\med(Y|\hat{\bm{\theta}})$ is asymptotically unbiased.
\end{corollary}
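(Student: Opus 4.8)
The plan is to read off both $L_2$ statements directly from Proposition~\ref{prop:bounds_medY_theta}, combined with the variance bookkeeping already carried out in the derivation of (\ref{eq:unconditional_variance}), in particular the elementary fact $\E[s^2_{\hat{\bm{\theta}}}]=\tfrac{n-1}{n}\sigma^2$. Since Proposition~\ref{prop:bounds_medY_theta} provides a conditional (almost sure) bound on $|\med(Y|\hat{\bm{\theta}})-\SM|$, the main move is to square that inequality and take expectations over $\hat{\bm{\theta}}$, rather than trying to use the unconditional bound.

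First I would square the almost sure inequality $|\med(Y|\hat{\bm{\theta}})-\SM|\le\sqrt{s^2_{\hat{\bm{\theta}}}/(n\alpha+1)}$ and take expectations, obtaining
\[
 \E\big[(\med(Y|\hat{\bm{\theta}})-\SM)^2\big]\ \le\ \frac{\E[s^2_{\hat{\bm{\theta}}}]}{n\alpha+1}\ =\ \frac{(n-1)\sigma^2}{n(n\alpha+1)}\ \stackrel{n\to\infty}{\longrightarrow}\ 0,
\]
which is exactly $\med(Y|\hat{\bm{\theta}})-\SM\stackrel{L_2}{\longrightarrow}0$. Next, for $\med(Y|\hat{\bm{\theta}})\stackrel{L_2}{\longrightarrow}\theta$ I would decompose $\med(Y|\hat{\bm{\theta}})-\theta=(\med(Y|\hat{\bm{\theta}})-\SM)+(\SM-\theta)$ and apply Minkowski's inequality in $L_2$: the first summand tends to $0$ in $L_2$ by the previous display, while $\|\SM-\theta\|_2=\sigma/\sqrt{n}\to0$ because $\SM$ is unbiased with variance $\sigma^2/n$. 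Hence the sum tends to $0$ in $L_2$ as well.

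Finally, asymptotic unbiasedness follows since $L_2$ convergence implies $L_1$ convergence, so
\[
 \big|\E[\med(Y|\hat{\bm{\theta}})]-\theta\big|\ =\ \big|\E[\med(Y|\hat{\bm{\theta}})-\theta]\big|\ \le\ \E\big|\med(Y|\hat{\bm{\theta}})-\theta\big|\ \le\ \big\|\med(Y|\hat{\bm{\theta}})-\theta\big\|_2\ \to\ 0.
\]

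I do not anticipate any genuine obstacle: all the substantive content is already packaged in Proposition~\ref{prop:bounds_medY_theta}. The only point worth flagging is not to substitute the unconditional bound $|\med(Y)-\theta|\le\sqrt{(\sigma^2/n)\,n(\alpha+1)/(n\alpha+1)}$ here — that controls the median of the unconditional law of $Y$, a deterministic quantity distinct from the random variable $\med(Y|\hat{\bm{\theta}})$ that appears in the statement — so the $L_2$ claims really do require the conditional inequality.
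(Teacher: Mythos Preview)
Your proposal is correct and mirrors the paper's proof essentially step for step: square the conditional bound from Proposition~\ref{prop:bounds_medY_theta} and take expectations using $\E[s^2_{\hat{\bm{\theta}}}]=\tfrac{n-1}{n}\sigma^2$, then apply the triangle inequality in $L_2$ together with $\|\SM-\theta\|_2=\sigma/\sqrt n$, and finally deduce asymptotic unbiasedness from $L_2\Rightarrow L_1$. Your remark distinguishing the random conditional median from the deterministic unconditional median is a useful clarification that the paper leaves implicit.
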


\begin{proof}
 Note from Proposition \ref{prop:bounds_medY_theta} that
\begin{equation*}
 \E\left[(\med(Y|\hat{\bm{\theta}})-\SM)^2\right] \leq \frac{1}{n\alpha+1} \frac{n-1}{n} \sigma^2 \stackrel{n \to \infty}{\longrightarrow}0,
\end{equation*}
so in particular the $L_2$ norm of $\med(Y|\hat{\bm{\theta}})$ and $\SM$ goes to zero as $n \to \infty$, the more so the smaller $\sigma^2$ is. Furthermore, by the triangle inequality,
\begin{equation*}
\sqrt{\E\left[(\med(Y|\hat{\bm{\theta}})-\theta)^2\right]} \leq \sqrt{\E\left[(|\med(Y|\hat{\bm{\theta}})-\SM|)^2\right]} + \sqrt{\E\left[(|\SM-\theta|)^2\right]} \stackrel{n \to \infty}{\longrightarrow} 0.
\end{equation*}
In particular, since $L_2$ convergence implies $L_1$ convergence,
\begin{equation*}
 \E[\med(Y|\hat{\bm{\theta}})-\theta] \leq \E[|\med(Y|\hat{\bm{\theta}})-\theta|] \stackrel{n \to \infty}{\longrightarrow} 0,
\end{equation*}
so $\med(Y|\hat{\bm{\theta}})$ is asymptotically unbiased.
\end{proof}

Note Proposition \ref{prop:bound_BMM_m} bounds the distance between $\BMM$ and $m=\med(Y|\hat{\bm{\theta}})$ and Proposition \ref{prop:bounds_medY_theta} bounds the distance between $m$ and $\theta$. Putting them together, it is possible to upper bound the finite-sample (unconditional) bias of the Bayesian median of means.

\begin{proposition} \label{prop:bias_bound}
Let $\hat{\theta}_1, \ldots, \hat{\theta}_n \iid [\theta, \sigma^2]$, $\mathbf{p}^{(j)} \iid \Dir_n(\alpha, \ldots, \alpha)$ and $Y_j= \sum_{i=1}^{n} p^{(j)}_i \hat{\theta}_i$ for $j=1, \ldots, J$, with $J>2$ and unique $m=\med(Y|\hat{\bm{\theta}})$. Consider $t_0 = 4\sqrt{s^2_{\hat{\bm{\theta}}}/\alpha}$, and assume there exists $C_{t_0}(\hat{\bm{\theta}})>0$ such that $\min_{[m, m+t_0]}f(\xi) > C_{t_0}(\hat{\bm{\theta}})$. Then
\begin{equation*}
|\E[\BMM]-\theta| \leq \sqrt{\frac{\pi}{2J}} \frac{1}{\tilde{C}} + \frac{2}{J-2}\frac{4 n^{-J/4}}{\sqrt{\alpha}} \sqrt{\sigma^2} + \sqrt{\frac{1}{n\alpha+1} \sigma^2},
\end{equation*}
where $\tilde{C}=\sqrt{\E[1/C_{t_0}(\hat{\bm{\theta}})]}$.
\end{proposition}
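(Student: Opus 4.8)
The plan is to peel off, via a single triangle inequality, the Monte Carlo error of the sample median from the structural error of the conditional median. Writing $m(\hat{\bm{\theta}})=\med(Y\mid\hat{\bm{\theta}})$, bound
$|\E[\BMM]-\theta| \le |\E[\BMM]-\E[m(\hat{\bm{\theta}})]| + |\E[m(\hat{\bm{\theta}})]-\theta|.$
The second summand is immediate from the remark following Proposition~\ref{prop:mean-median} applied to $X=Y$: using $\E[Y\mid\hat{\bm{\theta}}]=\SM$, $\V[Y\mid\hat{\bm{\theta}}]=s^2_{\hat{\bm{\theta}}}/(n\alpha+1)$ (equations~(\ref{eq:mean}) and~(\ref{eq:variance})) and $\E[Y]=\theta$, one gets $|\E[m(\hat{\bm{\theta}})]-\theta|\le\E_{\hat{\bm{\theta}}}[\sqrt{\V[Y\mid\hat{\bm{\theta}}]}]\le\sqrt{\E_{\hat{\bm{\theta}}}[\V[Y\mid\hat{\bm{\theta}}]]}=\sqrt{\frac{n-1}{n}\,\frac{\sigma^2}{n\alpha+1}}\le\sqrt{\sigma^2/(n\alpha+1)}$, which is the last summand of the claimed bound.

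For the first summand, I would use the tower property $\E[\BMM]=\E_{\hat{\bm{\theta}}}[\E_{\mathbf{p}}[\BMM\mid\hat{\bm{\theta}}]]$ and Jensen on the inner expectation to get $|\E[\BMM]-\E[m(\hat{\bm{\theta}})]|\le\E_{\hat{\bm{\theta}}}\big[\E_{\mathbf{p}}[\,|\BMM-m|\mid\hat{\bm{\theta}}\,]\big]$, and then evaluate the inner $L_1$ distance by the layer-cake identity $\E_{\mathbf{p}}[|\BMM-m|\mid\hat{\bm{\theta}}]=\int_0^\infty P_{\mathbf{p}}[|\BMM-m|>t\mid\hat{\bm{\theta}}]\,dt$, splitting the integral at $t_0=4\sqrt{s^2_{\hat{\bm{\theta}}}/\alpha}$. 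On $[0,t_0]$ Proposition~\ref{prop:bound_BMM_m} applies with the single constant $C=C_{t_0}(\hat{\bm{\theta}})$ for every $t\le t_0$ (since $[m,m+t]\subseteq[m,m+t_0]$ only improves the density lower bound), so the integrand is $\le 2e^{-2Jt^2C_{t_0}(\hat{\bm{\theta}})^2}$ and integrating over $(0,\infty)$ gives $\sqrt{\pi/(2J)}\,/\,C_{t_0}(\hat{\bm{\theta}})$. On $(t_0,\infty)$ Proposition~\ref{prop:bound_BMM_m_large} applies, because $\V[Y\mid\hat{\bm{\theta}}]=s^2_{\hat{\bm{\theta}}}/(n\alpha+1)<s^2_{\hat{\bm{\theta}}}/\alpha$ forces $t_0>2\sqrt{\V[Y\mid\hat{\bm{\theta}}]}$ whenever $n\ge1$; integrating $(4\sqrt{\V[Y\mid\hat{\bm{\theta}}]})^{J/2}t^{-J/2}$ over $(t_0,\infty)$ (convergent since $J>2$) produces $\frac{2}{J-2}\,t_0\,(4\sqrt{\V[Y\mid\hat{\bm{\theta}}]}/t_0)^{J/2}$, and since $4\sqrt{\V[Y\mid\hat{\bm{\theta}}]}/t_0=\sqrt{\alpha/(n\alpha+1)}\le n^{-1/2}$ and $t_0=4\sqrt{s^2_{\hat{\bm{\theta}}}}/\sqrt{\alpha}$, this is $\le\frac{2}{J-2}\,\frac{4n^{-J/4}}{\sqrt{\alpha}}\sqrt{s^2_{\hat{\bm{\theta}}}}$.

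It then remains to take the outer expectation over $\hat{\bm{\theta}}$: for the tail part, Jensen's inequality for the concave square root gives $\E[\sqrt{s^2_{\hat{\bm{\theta}}}}]\le\sqrt{\E[s^2_{\hat{\bm{\theta}}}]}\le\sqrt{\sigma^2}$, yielding the second summand $\frac{2}{J-2}\frac{4n^{-J/4}}{\sqrt{\alpha}}\sqrt{\sigma^2}$; for the $[0,t_0]$ part, $\E_{\hat{\bm{\theta}}}[1/C_{t_0}(\hat{\bm{\theta}})]$ is moved inside a square root by Cauchy--Schwarz and recorded as $1/\tilde C$ with $\tilde C$ as defined, giving the first summand $\sqrt{\pi/(2J)}\,/\,\tilde C$. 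Adding the three pieces proves the bound. The main obstacle is calibrating the cutoff $t_0$: it must simultaneously sit in the small-$t$ window where Proposition~\ref{prop:bound_BMM_m}'s exponential bound is sharp and in the large-$t$ window $t>2\sqrt{\V[Y\mid\hat{\bm{\theta}}]}$ where Proposition~\ref{prop:bound_BMM_m_large} is valid --- the choice $4\sqrt{s^2_{\hat{\bm{\theta}}}/\alpha}$ is engineered to do both, and is exactly what produces the $n^{-J/4}$ decay and the explicit $\sqrt{\alpha}$ --- together with the bookkeeping needed to pull every $\hat{\bm{\theta}}$-expectation outside the various square roots via Jensen/Cauchy--Schwarz; the remaining Gaussian- and power-tail integrations are routine.
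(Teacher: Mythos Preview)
Your proposal is correct and follows the paper's proof essentially line by line: decompose through $m=\med(Y\mid\hat{\bm{\theta}})$, control $\E[|\BMM-m|\mid\hat{\bm{\theta}}]$ by the layer-cake formula split at $t_0$ using Propositions~\ref{prop:bound_BMM_m} and~\ref{prop:bound_BMM_m_large}, control $|m-\SM|$ via Proposition~\ref{prop:mean-median}, and then pass the outer $\hat{\bm{\theta}}$-expectation through the square roots by Jensen. The only cosmetic difference is that you phrase the opening triangle inequality as $|\E[\BMM]-\E[m]|+|\E[m]-\theta|$ whereas the paper uses $\E[|\BMM-m|]+\E[|m-\SM|]$; after one application of Jensen these are the same bound.
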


\begin{proof}
First, decompose the bias as
\begin{equation} \label{eq:decompose_bias}
|\E[\BMM]-\theta| = |\E[\BMM - \SM]| \leq \E[|\BMM - \SM|] \leq \E[|\BMM-m|] + \E[|m-\SM|].
\end{equation}
To bound the first term, use Propositions \ref{prop:bound_BMM_m} and \ref{prop:bound_BMM_m_large} to get
\begin{align*}
\E[|\BMM-m| \mid \hat{\bm{\theta}}] &= \int_0^{t_0} P\left[|\BMM-m|\geq t\right] dt + \int_{t_0}^{\infty} P\left[|\BMM-m|\geq t\right] dt\\
&\leq 2 \int_0^{t_0}e^{-2JC_{t_0}(\hat{\bm{\theta}})t^2} dt + \left(4 \sqrt{\V[Y|\hat{\bm{\theta}}]}\right)^{J/2} \int_{t_0}^{\infty} t^{-J/2} dt \\
&\leq \sqrt{\frac{\pi}{2JC_{t_0}(\hat{\bm{\theta}})}} + \left(4 \sqrt{\V[Y|\hat{\bm{\theta}}]}\right)^{J/2} \frac{2}{J-2} t_0^{1-J/2} \\
&\leq \sqrt{\frac{\pi}{2JC_{t_0}(\hat{\bm{\theta}})}} + \frac{2^{J+1}}{J-2} \left(\sqrt{\frac{1}{n\alpha}s^2_{\hat{\bm{\theta}}}}\right)^{J/2} \left(4 \sqrt{\frac{1}{\alpha} s^2_{\hat{\bm{\theta}}}}\right)^{1-J/2}\\
&= \sqrt{\frac{\pi}{2JC_{t_0}(\hat{\bm{\theta}})}} + \frac{8}{J-2} n^{-J/4} \alpha^{-1/2} \sqrt{s^2_{\hat{\bm{\theta}}}} \\
&= \sqrt{\frac{\pi}{2JC_{t_0}(\hat{\bm{\theta}})}} + \frac{2}{J-2} \frac{1}{n^{J/4}} t_0.
\end{align*}
Thus, by the Law of Iterated Expectations and Jensen,
\begin{equation*}
\E[|\BMM-m|] \leq \sqrt{\frac{\pi}{2J} \E\left[\frac{1}{C_{t_0}(\hat{\bm{\theta}})}\right]} + \frac{8}{J-2} \frac{1}{n^{J/4}} \sqrt{\frac{n-1}{n\alpha}\sigma^2}.
\end{equation*}
For the second term on the left-hand side of (\ref{eq:decompose_bias}), note
\begin{equation*}
\E[|m-\theta|] \leq \sqrt{\frac{1}{n\alpha+1} \frac{n-1}{n} \sigma^2} \leq \sqrt{\frac{1}{n\alpha+1}\sigma^2},
\end{equation*}
which yields the bound.
\end{proof}

Note to have the bias go to zero it is necessary to take both $J, n \to \infty$. Increasing $J$ concentrates $\BMM$ around the conditional median, and increasing $n$ (or $\alpha$) lessens the bias in estimating the median instead of the mean. Indeed, as $\alpha \to \infty$, $\BMM$ converges to $\SM$ and $\med(Y\mid \hat{\bm{\theta}})=\theta$, but at the expense of possibly higher variance.

The decomposition in (\ref{eq:decompose_bias}) also readily gives a bound for the $L_1$ error of the procedure.

\begin{corollary}
Consider the setting of Proposition \ref{prop:bias_bound}. Then, 
\begin{equation*}
\E\left[|\BMM - \theta|\right] \leq \sqrt{\frac{\pi}{2J}} \frac{1}{\tilde{C}} + \frac{2}{J-2}\frac{4 n^{-J/4}}{\sqrt{\alpha}} \sqrt{\sigma^2} + \sqrt{\frac{1}{n\alpha+1} \sigma^2} + \sqrt{\frac{\sigma^2}{n}}.
\end{equation*}
In particular, if $J=O(n)$, then $\BMM$ is consistent.
\end{corollary}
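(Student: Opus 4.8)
The proof reuses almost everything already established in Proposition \ref{prop:bias_bound}. The plan is to insert the two intermediate quantities $m = \med(Y \mid \hat{\bm{\theta}})$ and $\SM$ and apply the triangle inequality:
\begin{equation*}
\E[|\BMM - \theta|] \leq \E[|\BMM - m|] + \E[|m - \SM|] + \E[|\SM - \theta|].
\end{equation*}
The proof of Proposition \ref{prop:bias_bound} already controls the first two summands: splitting $\E[|\BMM - m| \mid \hat{\bm{\theta}}] = \int_0^{t_0} + \int_{t_0}^{\infty} P[|\BMM - m| \geq t]\,dt$ and using the sub-Gaussian-type bound of Proposition \ref{prop:bound_BMM_m} on $[0,t_0]$ and the polynomial-tail bound of Proposition \ref{prop:bound_BMM_m_large} on $[t_0,\infty)$, then taking expectations over $\hat{\bm{\theta}}$ and applying Jensen, gives $\E[|\BMM - m|] \leq \sqrt{\pi/(2J)}\,\tilde C^{-1} + \frac{2}{J-2}\cdot\frac{4 n^{-J/4}}{\sqrt{\alpha}}\sqrt{\sigma^2}$; and Proposition \ref{prop:bounds_medY_theta}, which gives $|m - \SM| \leq \sqrt{s^2_{\hat{\bm{\theta}}}/(n\alpha+1)}$ almost surely, combined with $\E[s^2_{\hat{\bm{\theta}}}] = \frac{n-1}{n}\sigma^2 \leq \sigma^2$ and Jensen, gives $\E[|m - \SM|] \leq \sqrt{\sigma^2/(n\alpha+1)}$. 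So the first three summands of the asserted bound are already in hand.

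The only genuinely new term is $\E[|\SM - \theta|]$, which appears precisely because the corollary bounds the full $L_1$ error rather than merely the absolute bias $|\E[\BMM] - \theta|$. Since $\SM$ is unbiased, Jensen's inequality gives $\E[|\SM - \theta|] \leq \sqrt{\E[(\SM - \theta)^2]} = \sqrt{\V[\SM]} = \sqrt{\sigma^2/n}$, which is the fourth summand. Adding it to the previous three bounds yields the displayed inequality.

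For the consistency claim, fix $\alpha > 0$. The last two summands are $O(1/\sqrt{n})$ and vanish as $n \to \infty$; the second summand vanishes whenever $J \to \infty$, since $n^{-J/4} \leq 1$ and $1/(J-2) \to 0$; and the first summand $\sqrt{\pi/(2J)}\,\tilde C^{-1}$ vanishes provided $J \to \infty$ and $\tilde C$ stays bounded away from $0$, which is the same regularity content already invoked in Proposition \ref{prop:bias_bound} and is consistent with Example \ref{example:density_at_median}, where $f_{Y \mid \hat{\bm{\theta}}}(m)$ converges to a strictly positive constant. Choosing $J$ of order $n$ forces $J, n \to \infty$ simultaneously, so the right-hand side tends to $0$ and $\BMM \to \theta$ in $L_1$, hence in probability.

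I expect the only real work to be bookkeeping, all of it inherited from Proposition \ref{prop:bias_bound}: lining up the constants from the two-regime tail integral (carrying $t_0 = 4\sqrt{s^2_{\hat{\bm{\theta}}}/\alpha}$ through $\int_{t_0}^{\infty} t^{-J/2}\,dt = \frac{2}{J-2} t_0^{1-J/2}$ and simplifying $\bigl(4\sqrt{\V[Y\mid\hat{\bm{\theta}}]}\bigr)^{J/2} t_0^{1-J/2}$), and checking that each Jensen step is applied to the correct conditional quantity. There is no conceptual obstacle beyond what Proposition \ref{prop:bias_bound} already supplies.
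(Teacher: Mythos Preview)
Your proposal is correct and follows essentially the same approach as the paper: the same triangle-inequality decomposition through $m$ and $\SM$, the same invocation of Proposition \ref{prop:bias_bound} for the first two terms, and the same Jensen bound $\E[|\SM-\theta|] \leq \sqrt{\V[\SM]} = \sqrt{\sigma^2/n}$ for the new term. Your treatment of the consistency claim is slightly more explicit than the paper's (which simply says ``consistency follows from $L_1$ convergence''), but the content is identical.
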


\begin{proof}
 Note that
 \begin{equation*}
 \E\left[|\BMM-\theta|\right] \leq \E\left[|\BMM - m|\right] + \E[|m - \SM|] + \E[|\SM-\theta|].
 \end{equation*}
Proposition \ref{prop:bias_bound} gives a bound on the first two terms, while the second can be controlled via Chebyshev: $\E\left[|\SM-\theta|\right] \leq \sqrt{\E(\SM-\theta)} = \sqrt{\sigma^2/n}$. Consistency follows from $L_1$ convergence.
\end{proof}

The bound on the $L_1$ error above, while holding quite generally, is not very useful, particularly in terms of comparing $\BMM$ with $\SM$, since $\E[|\SM-\theta|]$ is needed to upper bound $\E[|\BMM-\theta|]$. To get a better understanding of when and how $\BMM$ outperforms the sample mean, it is necessary to investigate further asymptotic properties of this estimator using the results of Section \ref{sec:cond_moments_and_density_Y}.

\subsubsection{Asymptotic approximation} \label{sec:asymptotic_approximation}

Proposition \ref{prop:BMM_m_bias_and_variance} showed the variance of the Bayesian median of means is of order $O(1/J)$, and so is the bias in estimating $\med(Y|\hat{\bm{\theta}})$ with $\BMM$. This section develops an asymptotic expansion of $\med(Y|\hat{\bm{\theta}})$ to show the bias in estimating $\theta$ with $\med(Y|\hat{\bm{\theta}})$ is of order $O(1/(n\alpha))$. This implies, when $J=n$ and $\alpha=1$, the variance of $\BMM$ is of order $O(1/n)$ while the squared bias is only $O(1/n^2)$. The asymptotic approximation also gives a deterministic, approximate algorithm for $\BMM$.

\begin{proposition}\label{prop:aBMM}
Let $\hat{\theta}_1, \ldots, \hat{\theta}_n \iid [\theta, \sigma^2]$ with $\E[|\hat{\theta}_i|^3]<\infty$, and denote by $\SM$ the sample mean. If $Y=\sum_{i=1}^{n}p_i\hat{\theta}_i$ for $p\sim\Dir(\alpha, \ldots, \alpha)$, then almost surely with respect to the sampled $\hat{\theta}_1, \ldots, \hat{\theta}_n$,
\begin{equation} \label{eq:aBMM_with_remainder}
\med(Y|\hat{\bm{\theta}}) = \SM - \frac{1}{3}\frac{\sqrt{s^2_{\hat{\bm{\theta}}}}}{n \alpha + 2} \widhat{\skew}(\hat{\bm{\theta}})+ o\left(\frac{1}{n \alpha}\right).
\end{equation}
\end{proposition}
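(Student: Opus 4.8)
The plan is to treat $Y\mid\hat{\bm\theta}$ as a nearly Gaussian random variable whose (small) skewness is the only feature that matters at order $1/(n\alpha)$, and to extract its median by a Cornish--Fisher (Edgeworth-inversion) expansion. First I would read off the low-order conditional cumulants of $Y$ from the moment recursion (\ref{eq:recursive_moments}) and (\ref{eq:first_moment})--(\ref{eq:third_moment}): one has $\E[Y\mid\hat{\bm\theta}]=\SM$, $\V[Y\mid\hat{\bm\theta}]=s^2_{\hat{\bm\theta}}/(n\alpha+1)$, and a short computation of the third central moment from (\ref{eq:second_moment})--(\ref{eq:third_moment}) gives
\[
\E[(Y-\SM)^3\mid\hat{\bm\theta}] = \frac{2}{(n\alpha+1)(n\alpha+2)}\cdot\frac1n\sum_{i=1}^n(\hat\theta_i-\SM)^3 ,
\]
so that the conditional skewness is $\gamma_n:=\skew(Y\mid\hat{\bm\theta})=\dfrac{2\sqrt{n\alpha+1}}{n\alpha+2}\,\widhat{\skew}(\hat{\bm\theta})$. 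Since $\E[|\hat\theta_i|^3]<\infty$, the strong law gives $s^2_{\hat{\bm\theta}}\to\sigma^2$ and $\widhat{\skew}(\hat{\bm\theta})\to\skew(\hat\theta_1)$ almost surely, so $\gamma_n=O((n\alpha)^{-1/2})$ a.s.; the same recursion shows the standardized variable $Z_n:=(Y-\SM)/\sqrt{\V[Y\mid\hat{\bm\theta}]}$ has mean $0$, variance $1$, third cumulant $\gamma_n$, and higher cumulants $O(1/(n\alpha))$.

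With the cumulants in hand, I would invoke the one-term Edgeworth expansion for the distribution function of $Z_n$,
\[
F_{Z_n}(z) = \Phi(z) - \frac{\gamma_n}{6}(z^2-1)\phi(z) + O\!\left(\frac{1}{n\alpha}\right),
\]
uniformly in $z$; this is legitimate because $Y\mid\hat{\bm\theta}$ is absolutely continuous (Proposition \ref{prop:density}), so Cram\'er's condition holds, and the Gamma-ratio representation $p_i\stackrel{d}{=}G_i/\sum_j G_j$ from Proposition \ref{prop:bayesian_bootstrap_CLT} reduces the required characteristic-function bounds to routine estimates. Inverting at the level $1/2$, using $\Phi(0)=1/2$ and $\Phi'(0)=\phi(0)$, the quantile equation $F_{Z_n}(z^\star)=1/2$ gives $z^\star = -\gamma_n/6 + O(1/(n\alpha))$, hence
\[
\med(Y\mid\hat{\bm\theta}) = \SM + z^\star\sqrt{\V[Y\mid\hat{\bm\theta}]} = \SM - \frac{\gamma_n}{6}\sqrt{\frac{s^2_{\hat{\bm\theta}}}{n\alpha+1}} + O\!\left((n\alpha)^{-3/2}\right).
\]
Substituting $\gamma_n = 2\sqrt{n\alpha+1}\,\widhat{\skew}(\hat{\bm\theta})/(n\alpha+2)$ collapses the main term to $\tfrac13\,\sqrt{s^2_{\hat{\bm\theta}}}\,\widhat{\skew}(\hat{\bm\theta})/(n\alpha+2)$, and since $O((n\alpha)^{-3/2})=o(1/(n\alpha))$, this is exactly (\ref{eq:aBMM_with_remainder}).

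I expect the main obstacle to be making the Edgeworth remainder genuinely of the stated order \emph{almost surely and uniformly} over realizations of the sequence $\hat\theta_1,\hat\theta_2,\ldots$: because $Y$ is a Dirichlet mean rather than a normalized sum of i.i.d.\ summands, Esseen's theorem cannot be quoted directly, and the clean route is to control the conditional characteristic function $\E[e^{itZ_n}\mid\hat{\bm\theta}]$ on the relevant frequency range via the Gamma representation and then apply the smoothing inequality, checking that all constants that appear are themselves controlled through the almost-sure limits of the empirical moments of $\hat{\bm\theta}$. A secondary point is that the inversion takes place where $f_{Y\mid\hat{\bm\theta}}$ is bounded below, which follows from Proposition \ref{prop:density} together with $m\to\theta$ (Proposition \ref{prop:bounds_medY_theta}) once $n$ is large.
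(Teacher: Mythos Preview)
Your proposal is correct and follows essentially the same route as the paper: compute the conditional skewness of $Y\mid\hat{\bm\theta}$, apply a one-term Edgeworth expansion to the standardized variable, and invert at the $1/2$-quantile to recover the median. The only real difference is that the paper does not derive the Edgeworth expansion from scratch via characteristic-function estimates as you propose, but instead invokes the second-order Edgeworth expansion for Dirichlet-weighted means due to Weng (1989), which already supplies the needed expansion with remainder $o((n\alpha)^{-1/2})$ almost surely; this is precisely the ``main obstacle'' you anticipated, and citing that result dispatches it in one line rather than the Gamma-representation argument you sketch.
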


\begin{proof}
Recall from Proposition \ref{prop:bayesian_bootstrap_CLT} that, conditional on $\hat{\bm{\theta}} = (\hat{\theta}_1, \ldots, \hat{\theta}_n)$, 
\begin{equation*}
\sqrt{n}\tilde{Y} = \frac{\sqrt{n}(Y-\SM)}{\sqrt{\frac{1}{n\alpha+1}s^2_{\hat{\bm{\theta}}}}} \Longrightarrow N(0, 1),
\end{equation*}
where $\tilde{Y}$ is the standardized version of $Y$.
Since $\E[\hat{\theta}_i^3]<\infty$, the Edgeworth expansion of \cite{weng1989second} implies that, almost surely with respect to the empirical distribution of $\hat{\theta}_i$,
\begin{equation}\label{eq:bayesian_edgeworth_expansion}
F_{\tilde{Y}|\hat{\bm{\theta}}}(\tilde{y}) = \Phi(\tilde{y}) - \frac{1}{3} \frac{\sqrt{n\alpha+1}}{n\alpha+2} \cdot \widhat{\skew}(\hat{\bm{\theta}}) \cdot (\tilde{y}^2-1) \varphi(\tilde{y}) + o\left(\frac{1}{\sqrt{n\alpha}}\right).
\end{equation}
where $\Phi(\tilde{y})$ and $\varphi(\tilde{y})$ denote the cumulative distribution and probability density functions, respectively, of a standardized Normal random variable, and $\widhat{\skew}(\hat{\bm{\theta}})$ is the sample skewness of $\hat{\theta}_1, \ldots, \hat{\theta}_n$, that is, 
\begin{equation*} 
\widhat{\skew}(\hat{\bm{\theta}}) = \frac{\frac{1}{n}\sum_{i=1}^{n}(\hat{\theta}_i - \SM)^3}{\left(\frac{1}{n} \sum_{i=1}^{n} (\hat{\theta}_i - \SM)^2\right)^{3/2}}.
\end{equation*}
The median of a standardized Normal is zero, so, from Proposition \ref{prop:bounds_medY_theta}, it suffices to consider the median of $\tilde{Y}$ to be $\tilde{m} = O(1/\sqrt{n\alpha})$. In this case, a Taylor expansion yields
\begin{equation*}
\Phi(0) = \Phi(\tilde{m}) + \varphi(\tilde{m}) (0-\tilde{m}) + O\left(\frac{1}{n}\right),
\end{equation*}
and so plugging in $\tilde{y}=\tilde{m}$ in (\ref{eq:bayesian_edgeworth_expansion}) gives
\begin{align*}
\frac{1}{2} &= \Phi(\tilde{m}) - (\tilde{m}^2-1)\varphi(\tilde{m}) \cdot \frac{1}{3} \frac{\sqrt{n\alpha+1}}{n\alpha+2} \cdot\widhat{\skew}(\hat{\bm{\theta}}) + o\left(\frac{1}{\sqrt{n\alpha}}\right)\\
&= \frac{1}{2} + \tilde{m} \varphi(\tilde{m}) + \varphi(\tilde{m}) \frac{1}{3} \frac{\sqrt{n\alpha+1}}{n\alpha+2} \cdot\widhat{\skew}(\hat{\bm{\theta}}) + o\left(\frac{1}{\sqrt{n\alpha}}\right),
\end{align*}
so
\begin{equation*}
\tilde{m} = -  \frac{1}{3} \frac{\sqrt{n\alpha+1}}{n\alpha+2} \cdot\widhat{\skew}(\hat{\bm{\theta}}) + o\left(\frac{1}{\sqrt{n\alpha}}\right).
\end{equation*}
Since $\med(Y|\hat{\bm{\theta}}) = m  = \SM + \sqrt{\frac{1}{n\alpha+1}s^2_{\hat{\bm{\theta}}}} \tilde{m}$, this implies
\begin{equation*}
\med(Y|\hat{\bm{\theta}}) = \SM - \frac{1}{3} \frac{1}{n\alpha+2} \sqrt{s^2_{\hat{\bm{\theta}}}}  \cdot\widhat{\skew}(\hat{\bm{\theta}}) + o\left(\frac{1}{n\alpha}\right),
\end{equation*}
so the conditional median is, to first-order, a variance and skewness correction applied to the sample mean.
\end{proof}

Recall that, as $J \to \infty$, $\med(Y|\hat{\bm{\theta}})$ becomes well-approximated by $\BMM$, and increasing the value of $J$ is generally easy since it only involves sampling Dirichlets. In this sense, the previous proposition illuminates important aspects of the Bayesian median of means estimator. Several key remarks are collected below.

\vspace{5mm}

\noindent
\textbf{Relationsip between $\BMM$ and $\SM$.} Formula (\ref{eq:aBMM_with_remainder}) synthesizes many previous results. For example, it makes it clear that as $\alpha \to \infty$, $\BMM$ is converging to $\SM$ and so, in particular, $\BMM$ is asymptotically unbiased. To first-order, the two estimators differ when the sample variance or the sample skewness are very large, in which case $\BMM$ applies a correction that reduces variance at the expense of bias. In cases where $\SM$ is known to be optimal, for instance when $\hat{\theta}_i\iid N(\theta, \sigma^2)$, $\BMM$ and $\SM$ are virtually the same. Also, while $\SM$ performs worse the higher $\sigma^2$ is, $\BMM$ does not have its asymptotic variance affected by $\sigma^2$, but its bias roughly depends on $\sigma^2 \cdot \skew(\hat{\bm{\theta}})$. Thus, the ideal setting for $\BMM$ relative to $\SM$ is one with high variance, low skewness --- precisely those of Example \ref{example:ARE}.

\vspace{5mm}

\noindent
\textbf{How $\alpha$ affects convergence.} The values of $n$ and $\alpha$ are directly related. If $\alpha$ is much smaller than $1$, the bias of the estimator can be relatively large, even asymptotically. For instance, if $\alpha=1/n$ the Bayesian median of means ceases to be asymptotically unbiased, as there is a constant correction to the sample mean, possibly incurring high variance. Values of $\alpha$ much larger than 1 suggest more data is available than is actually the case, and thus the estimator relies more on the sample mean. Proposition \ref{prop:bayesian_bootstrap_CLT} also holds when $\alpha \to \infty$, since a $\text{Gamma}(\alpha)$ can be thought of as the convolution of $\alpha$ $\text{Exponential}$ random variables, in which case the Central Limit Theorem yields $\sqrt{\alpha}(Y-\SM) \mid \hat{\bm{\theta}} \Rightarrow N(0, s^2_{\hat{\bm{\theta}}}/n)$, so Proposition \ref{prop:aBMM} still carries through for $\alpha \to \infty$ with fixed $n$.

Example \ref{example:compare_BMM_aBMM} compares the behavior of $\BMM$ and $\SM$ for varying values of $n$ and $\alpha$ for a Skewnormal simulation. The larger $n$ or $\alpha$, the more the estimators look alike.

\vspace{5mm}

\noindent
\textbf{Cornish-Fisher expansion.} The proof of Proposition \ref{prop:aBMM} can be thought of as the development of a Cornish-Fisher expansion for the median of $Y|\hat{\bm{\theta}}$, which provides a full asymptotic expansion of the quantiles of $F_{Y|\hat{\bm{\theta}}}(y)$ in terms of its cumulants. For example, using (\ref{eq:third_moment}), the skewness of $Y|\hat{\bm{\theta}}$ can be written
\begin{align*}
 \widhat{\skew}(Y|\hat{\bm{\theta}}) &= \frac{\E[Y^3|\hat{\bm{\theta}}] - 3 \E[Y|\hat{\bm{\theta}}]\V[Y|\hat{\bm{\theta}}] - (\E[Y|\hat{\bm{\theta}}])^3}{(\V[Y|\hat{\bm{\theta}}])^{3/2}} \\
 &= \frac{\frac{2}{(n\alpha+2)(n\alpha+1)}\left[\overline{\theta^3} - 3 \SM \left(\overline{\theta^2} - \left(\SM\right)^2\right)^2 - \SM^3\right]}{\left(\frac{1}{n\alpha+1}s^2_{\hat{\bm{\theta}}}\right)^{3/2}} \\
 &= \frac{2 (n\alpha+1)^{1/2}}{n\alpha+2} \cdot \widhat{\skew}(\hat{\bm{\theta}}),
\end{align*}
which appears as part of the coefficient of the first factor in (\ref{eq:bayesian_edgeworth_expansion}). Section \ref{sec:cond_moments_and_density_Y} established the density of $Y|\hat{\bm{\theta}}$ and a recursive formula for higher moments, which is what is needed to develop further terms and establish better approximations. In particular, a formal Cornish-Fisher expansion implies the remainder in (\ref{eq:aBMM_with_remainder}) is actually $O(\frac{1}{n^2 \alpha^2})$.

\vspace{5mm}

\noindent
\textbf{Bias of $\BMM$ is asymptotically negligible.} The motivation for the Bayesian median of means is to improve on the mean squared error of the sample mean by trading off some variance for bias. For this approach to work, it is paramount that the squared bias introduced is not larger than the reduction in variance. The following corollary provides an asymptotic assurance.

\begin{corollary}
Let $\hat{\theta}_1, \ldots, \hat{\theta}_n \iid [\theta, \sigma^2]$, with $\E[|\hat{\theta}_i|^3]<\infty$, and consider $Y_j = \sum_{i=1}^{n} p^{(j)}_i \hat{\theta}_i$, where $p^{(j)}\iid \Dir(\alpha, \ldots, \alpha)$, $j=1, \ldots, J$. Take and $J = n$ and $\alpha=1$. If $\med(Y|\hat{\bm{\theta}})$ is unique, then almost surely
\begin{equation*}
\E[\BMM] = \theta + O\left(\frac{1}{n}\right), \qquad \V[\BMM] = O\left(\frac{1}{n}\right).
\end{equation*}
\end{corollary}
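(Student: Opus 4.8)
The plan is to stack the three approximations already established: $\BMM$ is close to the conditional median $m:=\med(Y|\hat{\bm{\theta}})$ by Proposition \ref{prop:BMM_m_bias_and_variance}, $m$ is close to $\SM$ by the skewness expansion of Proposition \ref{prop:aBMM}, and $\SM$ is unbiased for $\theta$. So I would write $\BMM-\theta = (\BMM-m)+(m-\SM)+(\SM-\theta)$ and estimate the first two moments of each summand over the joint randomness in $\hat{\bm{\theta}}$ and $\{\mathbf{p}^{(j)}\}_{j=1}^J$, specialising to $J=n$, $\alpha=1$ at the end.

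For the bias, average the decomposition; the last term drops since $\E[\SM]=\theta$. For $\E[m-\SM]$, insert Proposition \ref{prop:aBMM}: $m-\SM = -\tfrac13(n+2)^{-1}\sqrt{s^2_{\hat{\bm{\theta}}}}\,\widhat{\skew}(\hat{\bm{\theta}}) + o(1/n)$ almost surely. By the strong law (finite third moment) $s^2_{\hat{\bm{\theta}}}\to\sigma^2$ and $\tfrac1n\sum_i(\hat\theta_i-\SM)^3\to\E[(\hat\theta_1-\theta)^3]$ a.s., so the leading term has expectation $O(1/n)$, and the $o(1/n)$ remainder is controlled in expectation using the envelope $|m-\SM|\le\sqrt{s^2_{\hat{\bm{\theta}}}/(n+1)}$ from Proposition \ref{prop:bounds_medY_theta}. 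Finally $\E[\BMM]-\E[m]=O(1/J)=O(1/n)$ by Proposition \ref{prop:BMM_m_bias_and_variance}; its constant is $-f'_{Y|\hat{\bm{\theta}}}(m)/\big(8Jf_{Y|\hat{\bm{\theta}}}(m)^3\big)$, and the Edgeworth expansion (\ref{eq:bayesian_edgeworth_expansion}) used in the proof of Proposition \ref{prop:aBMM} pins $f_{Y|\hat{\bm{\theta}}}$ near $m$ to order $\sqrt n\,\varphi$, so this constant is $O(1/\sqrt n)$ and the term is in fact $O(n^{-3/2})$. Summing, $|\E[\BMM]-\theta|=O(1/n)$.

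For the variance, bound $\V[\BMM]\le 2\V[\BMM-\SM]+2\V[\SM]\le 2\E[(\BMM-\SM)^2]+2\sigma^2/n$, and split $\E[(\BMM-\SM)^2]\le 2\E[(\BMM-m)^2]+2\E[(m-\SM)^2]$. The second piece is $\le 2\E[s^2_{\hat{\bm{\theta}}}/(n+1)]=O(1/n)$ by Proposition \ref{prop:bounds_medY_theta}. The first piece equals, conditionally on $\hat{\bm{\theta}}$, $\V[\BMM|\hat{\bm{\theta}}]+(\E[\BMM|\hat{\bm{\theta}}]-m)^2 = \tfrac{1}{4Jf_{Y|\hat{\bm{\theta}}}(m)^2}+O(1/J^2)$ by Proposition \ref{prop:BMM_m_bias_and_variance}; with $J=n$ and $f_{Y|\hat{\bm{\theta}}}(m)$ of order $\sqrt n$ (again from (\ref{eq:bayesian_edgeworth_expansion})) this is $O(1/n^2)$, uniformly enough to survive taking expectations. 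Hence $\E[(\BMM-\SM)^2]=O(1/n)$ and $\V[\BMM]=O(1/n)$.

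The main obstacle is not the bookkeeping but promoting the almost-sure $O(\cdot)$ statements of Propositions \ref{prop:BMM_m_bias_and_variance} and \ref{prop:aBMM} to $O(\cdot)$ bounds on expectations when only a finite third moment is available. Two ingredients are needed: (i) a lower bound $f_{Y|\hat{\bm{\theta}}}(m)\gtrsim\sqrt n$ holding with overwhelming probability, which I would extract from the Edgeworth expansion (\ref{eq:bayesian_edgeworth_expansion}); and (ii) uniform integrability of $\sqrt{s^2_{\hat{\bm{\theta}}}}\,\widhat{\skew}(\hat{\bm{\theta}})$ — equivalently an $O(1)$ bound on $\E\!\big[\big|\tfrac1n\sum_i(\hat\theta_i-\SM)^3\big|/s^2_{\hat{\bm{\theta}}}\big]$ — together with uniform integrability of the expansion remainders. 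Point (ii) is where the minimal-moment hypothesis bites: the clean route is to add a finite fourth-moment assumption, which simultaneously gives concentration of $s^2_{\hat{\bm{\theta}}}$ and the sharper $O(1/(n\alpha)^2)$ Cornish--Fisher remainder noted after Proposition \ref{prop:aBMM}, after which all the remaining estimates are routine; alternatively, the variance bound holds conditionally on a typical $\hat{\bm{\theta}}$-sequence with no extra hypothesis (and there even improves to $O(1/n^2)$).
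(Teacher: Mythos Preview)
Your proposal is correct and follows essentially the same route as the paper: split through $m=\med(Y\mid\hat{\bm{\theta}})$ and invoke Proposition~\ref{prop:aBMM} for the $m$--$\theta$ gap and Proposition~\ref{prop:BMM_m_bias_and_variance} for the $\BMM$--$m$ gap, with $J=n$, $\alpha=1$. The paper's own proof is a three-sentence sketch that does exactly this and stops; it does not engage with the uniform-integrability concerns or the $f_{Y\mid\hat{\bm{\theta}}}(m)\asymp\sqrt{n}$ scaling you identify, so your version is strictly more careful than the original.
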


\begin{proof}
 If $\alpha=1$ and $J = O(n)$, then Proposition \ref{prop:aBMM} implies the bias between $\med(Y|\hat{\bm{\theta}})$ and $\theta$ is of order $O\left(\frac{1}{n}\right)$, while Proposition \ref{prop:BMM_m_bias_and_variance} imply the bias between $\BMM$ and $\med(Y|\hat{\bm{\theta}})$ is of order $O\left(\frac{1}{n}\right)$, so the overall bias of $\BMM$ is $O(\frac{1}{n})$. From Proposition \ref{prop:BMM_m_bias_and_variance}, the variance is of order $O\left(\frac{1}{n}\right)$.
\end{proof}

The crucial implication of the above corollary is that, in terms of mean squared error, the squared bias incurred by the Bayesian median of means is of order $O(\frac{1}{n^2})$, and so it is negligible with respect to the variance, which is of order $O(\frac{1}{n})$. Put another way, for $n$ large enough, the expected decrease in variance of the Bayesian median of means is sure to make its mean squared error smaller than that of the sample mean. Determining how large $n$ should be, however, for the asymptotic regime to be sufficiently accurate depends on the underlying distribution of $\hat{\theta}_i$; in particular, note the MLE generally exhibits a similar behavior of trading-off some bias for variance, but it does so by requiring the distribution to be specified.

\vspace{5mm}

\noindent
\textbf{Deterministic approximation to $\BMM$.} Proposition \ref{prop:aBMM} also suggests a new, robust mean estimator when $\E[|\hat{\theta}_i|^3]<\infty$: the \emph{approximate Bayesian median of means}, defined as
\begin{equation} \label{eq:aBMM}
\aBMM = \SM - \frac{1}{3} \frac{\sqrt{s^2_{\hat{\bm{\theta}}}}}{n\alpha+2} \widhat{\skew}(\hat{\bm{\theta}}).
\end{equation}
Note $\aBMM$ is deterministic and simpler to compute than the original $\BMM$, since it doesn't require the sampling of auxiliary Dirichlet random variables. It only has one hyperparameter, namely $\alpha$, and Section \ref{sec:choosing_alpha} considers how to set it. Example \ref{example:compare_BMM_aBMM} shows that $\aBMM$ is accurate in replicating the performance of $\BMM$ for a Skewnormal example; further comparisons are provided in Section \ref{sec:comparison_with_sample_mean_and_median_of_means}. Better approximations are possible by using additional terms of the Cornish-Fisher expansion, but at the expense of more computation and assumptions on the distribution of $\hat{\theta}_1, \ldots, \hat{\theta}_n$.

\begin{example}[Approximate BMM] \label{example:compare_BMM_aBMM} Let $\hat{\theta}_1, \ldots, \hat{\theta}_n$ be a sample of $n=1000$ iid samples from a Skewnormal distribution, with location $\xi=0$, scale $\omega=20$ and shape $\beta=10$. The sample mean is $\SM \approx 15.625$ and the true mean is $\theta \approx 15.878$ (with variance $147.87$ and skewness $0.956$). 
	
Figure \ref{fig:aBMM} illustrates the result of applying both the approximate and the exact Bayesian median of means procedures to such data, with $J=n=1000$ and varying values of $\alpha$. The violin plot in the figure, drawn from $100$ simulations, shows the distribution of $\BMM$ is approximately a scaled Normal, in accordance with Proposition $\ref{prop:medianCLT}$, and that $\aBMM$ and $\BMM$ agree with each other the bigger $\alpha$ is.

Figure \ref{fig:aBMM_change_n} considers the same setting, but with fixed $\alpha=1$ and varying values of $n$. Note the sampled $\hat{\theta}_1, \ldots, \hat{\theta}_n$ change with $n$, which is why there are variations in each case. Overall, it is clear that $\aBMM$ provides a decent approximation to $\BMM$, and they are generally similar to $\SM$, especially as $n$ increases, in which case all estimators approach the value of $\theta$. For these simulations, $J$ is kept fixed at $1000$.
\end{example}

\begin{figure}[htbp]
\begin{center}
{\fontfamily{cmss}\selectfont \textbf{$\aBMM$ vs $\BMM$, different $\alpha$}}
\includegraphics[width=\textwidth]{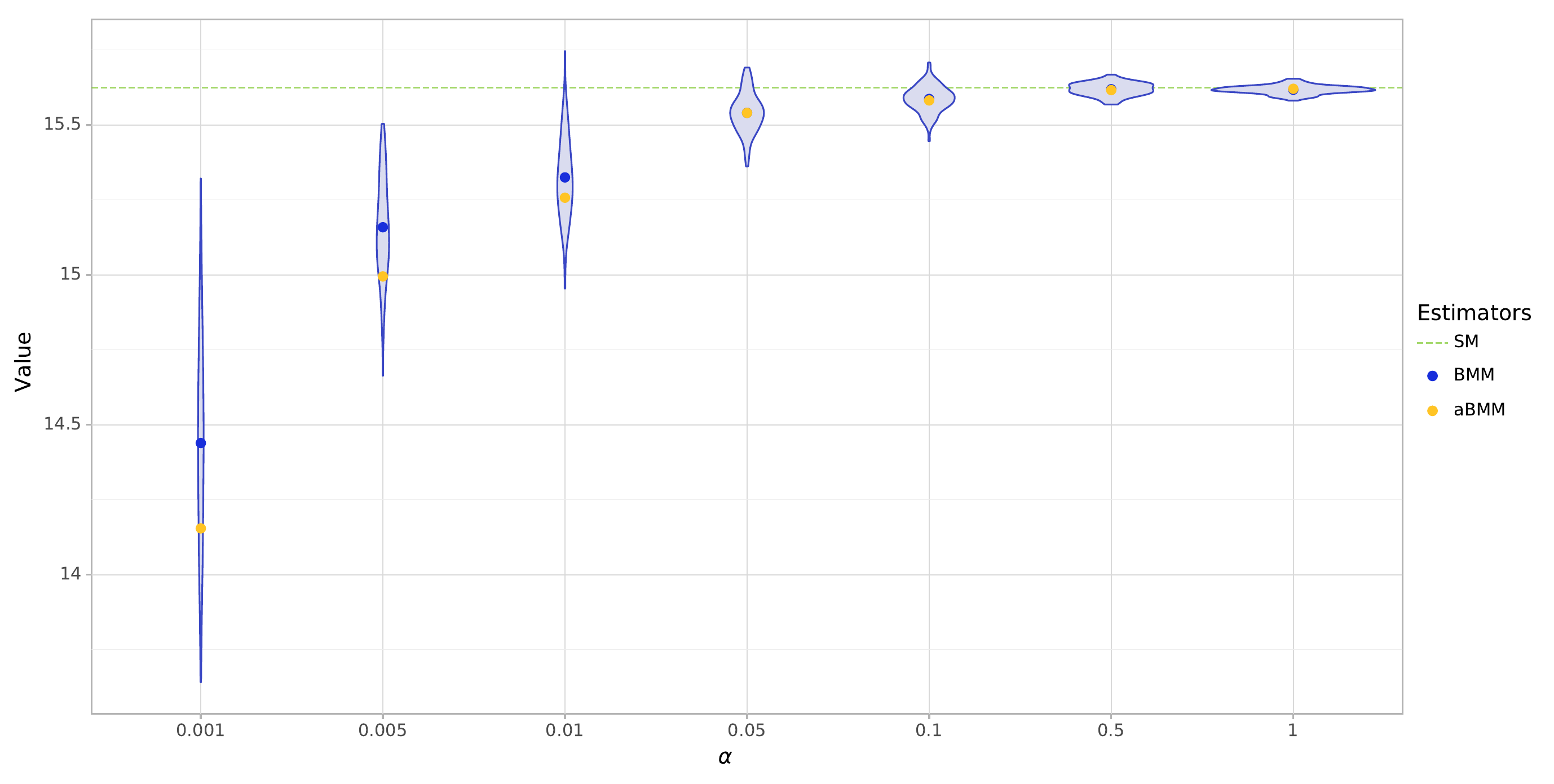}
\caption{Violin plot for $\BMM$ for different values of $\alpha$, for fixed $\hat{\theta}_1, \ldots, \hat{\theta}_{1000}$ sampled iid from a Skewnormal distribution, and 100 simulations. The blue dot is the mean of the distribution; the yellow dot is the deterministic approximation to $\BMM$, and the green line is the sample mean.}
\label{fig:aBMM}

\vspace{15mm}

{\fontfamily{cmss}\selectfont \textbf{$\aBMM$ vs $\BMM$, different $n$}}
\includegraphics[width=\textwidth]{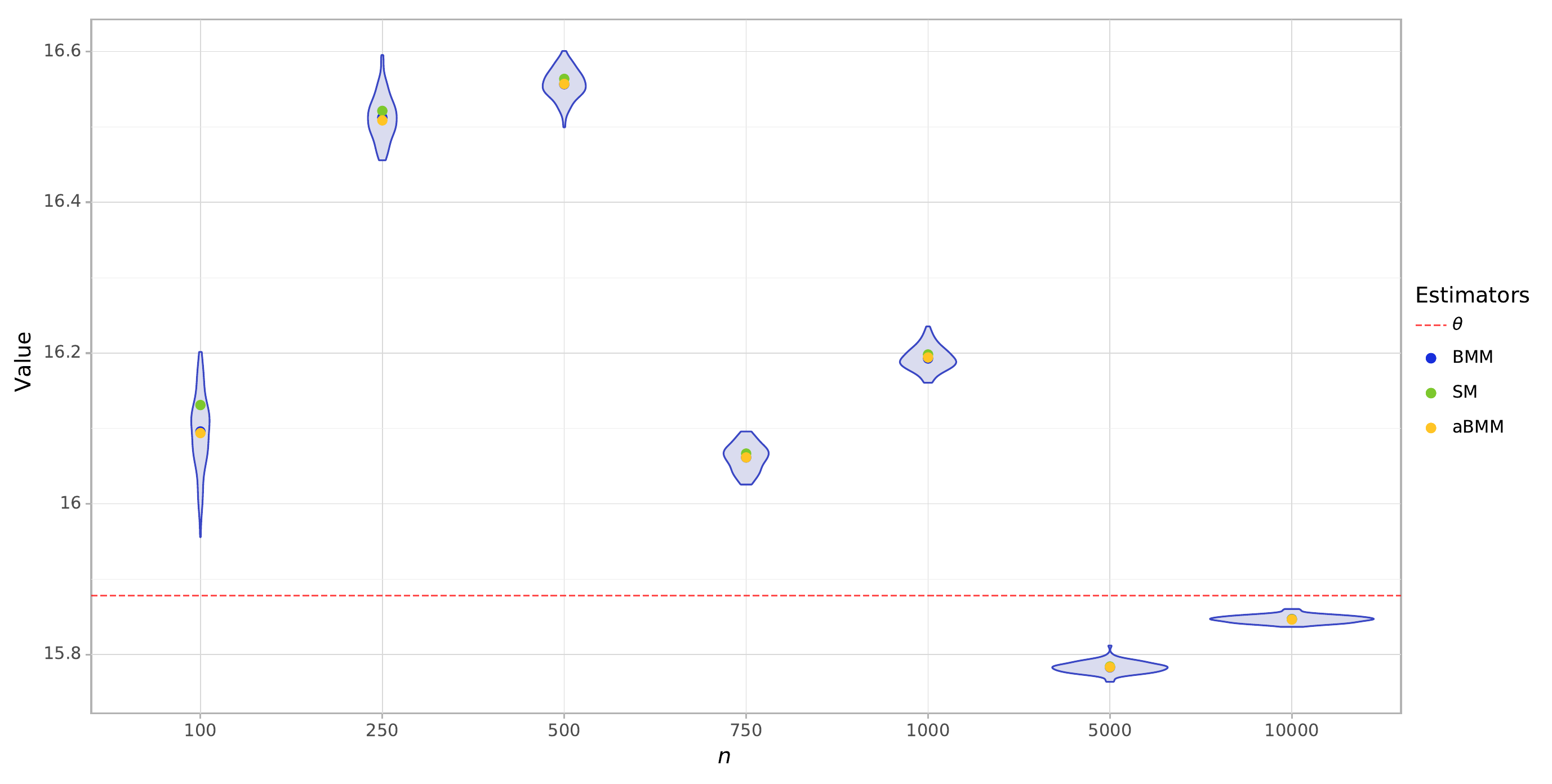}
\caption{Violin plot for $\BMM$ for different values of $n$, for fixed $\hat{\theta}_1, \ldots, \hat{\theta}_n$ sampled iid from a Skewnormal distribution, and 100 simulations. The yellow dot is the deterministic approximation to $\BMM$, the blue dot is the mean of the $\BMM$ distribution; the green dot is the sample mean; and the red line is the true value of the parameter.}
\label{fig:aBMM_change_n}
 \end{center}
\end{figure}

\subsection{Choosing $\alpha$} \label{sec:choosing_alpha}

The Bayesian median of means algorithm is fully specified, except for the choice of hyperparameter $\alpha$. This section discussesr how to set $\alpha$, from picking it independently of the data to more data-driven choices. For its computational simplicity and statistical properties, $\alpha=1$ is taken to be the recommended value.

\subsubsection{Setting $\alpha=1$}

Recall large values of $\alpha$ approximate $\BMM$ to $\SM$, so they induce less bias but more variance. Ideally, one would set $\alpha$ large enough so as to have minimum bias while still keeping the variance in control. From this regard, recall $\BMM = \widhat{\med}(Y_1, \ldots, Y_J)$, where
\begin{align*}
\V[Y_1] &= \E[\V[Y_1|\hat{\bm{\theta}}]] + \V[\E[Y_1|\hat{\bm{\theta}}]] = \E\left[\frac{1}{n\alpha+1}s^2_{\hat{\bm{\theta}}}\right]+\V[\SM]\\
&= \frac{1}{n\alpha+1} \frac{n-1}{n} \sigma^2 + \frac{\sigma^2}{n}.
\end{align*}
The second term above is not affected by $\alpha$, so one could pick $\alpha$ to have the first term of the same asymptotic order of the second. This amounts to setting $\alpha=1-2/n$. For sufficiently large $n$, it makes sense to simply set $\alpha=1$.

Computationally, the choice $\alpha=1$ is also advantageous, since in this case sampling $(p_1, \ldots, p_n) \sim \Dir_n(1, \ldots, 1)$ can be done by sampling $n-1$ Uniform random variables, $U_1, \ldots, U_{n-1}$, and ordering them. Then, set $U_0=0$, $U_{n}=1$ and take $p_{i}=U_{(i)}-U_{(i-1)}$ for $i=1, \ldots, n$. For extremely large values of $n$, sorting is more expensive than sampling Gammas, the usual way of obtaining a Dirichlet draw; still, for the case $\alpha=1$, Exponentials can be sampled instead of Gammas, which are much faster.

Other canonical, data-independent choices for $\alpha$ are $\alpha=4$, so the first two terms of the Edgeworth expansion (\ref{eq:bayesian_edgeworth_expansion}) match that of the regular bootstrap, and $\alpha=0.5$, which amounts to the half-sampled bootstrap scheme considered in \cite{friedman2007bagging}.

\subsubsection{Picking $\alpha$ via prior information}

One of the advantages of the Bayesian median of means is that prior information can be easily incorporated. If the user is confident the underlying distribution is close to Normal, a higher $\alpha$ should be set; if they expect a distribution with high or infinite variance, such as the importance sampling estimator in Section \ref{sec:introduction}, then lower values of $\alpha$ are better. In particular, $\alpha$ can be attributed its own prior, say $\alpha \sim \text{Beta}(\beta_1, \beta_2)$, and have its parameters determined via an Empirical Bayes approach. More generally, the prior might depend on sample quantities such as standard deviation or skewness.

Prior information can also help set $\alpha$ through asymptotic considerations. Recall the bias of $\BMM$ is of order $O(1/J)+O(1/n\alpha)$, so, for instance, $\alpha=1/\sqrt{n}$ and $J=n\alpha$ imply the bias squared is of the same order as the variance, which might be desirable if it is known beforehand that variance is a bigger concern than bias. 

\subsubsection{Picking $\alpha$ adaptively}

It is also possible to pick $\alpha$ depending on the sampled values $\hat{\theta}_1, \ldots, \hat{\theta}_n$ by using cross-validation. That is, for a given choice of $\alpha$, split $\hat{\theta}_1, \ldots, \hat{\theta}_n$ into $k$ folds, and use $\BMM$ with $k-1$ folds worth of data to estimate, say, the sample mean of the unseen fold as a proxy for $\theta$. Average the errors over the folds to obtain an error estimate, and pick the $\alpha$ that yields lowest error. Similarly, if $\alpha$ is set much smaller than $1$, most samples $\mathbf{p}^{(1)}, \ldots, \mathbf{p}^{(J)}$ will contain coordinates very close to zero. Setting them to zero amounts to not using some of the $\hat{\theta}_i$ in creating $Y_j$, so these $\hat{\theta}_i$ can be thought of as out-of-bag samples, and the $\alpha$ that best predicts the $\hat{\theta}_i$ not used is selected. Note, however, that the number of folds now becomes another hyperparameter to be determined. Furthermore, if the sample is highly skewed or with large variance then cross-validation is expected to fail, since there might be severe mismatches between the folds.

\section{Empirical Results} \label{sec:empirical_results}

This section considers the empirical behavior of the Bayesian median of means in a variety of settings. The results will generally be compared against the sample mean, a standard nonparametric location estimator, using mean squared error loss. By default, the Bayesian median of means will use $\alpha=1$ and $J=n$. The full procedure is given in Algorithm \ref{alg:bmm} below. Note this is readily parallelizable. All the code to generate the figures and examples can be found at \url{https://github.com/paulo-o/bmm}.

The examples below range from low to high-variance distributions, coming from real and simulated data. Section \ref{sec:comparison_with_sample_mean_and_median_of_means} compares the behavior of the Bayesian median of means, $\BMM$, against four other candidates: the sample mean, $\SM$; the approximate Bayesian median of means, $\aBMM$; the classical median of means, $\MM$; and the sample median, $\widhat{\med}(\hat{\bm{\theta}})$. Section \ref{sec:confidence intervals} considers the issue of developing confidence intervals for $\BMM$ in an efficient manner, while Section \ref{sec:applications} applies $\BMM$ to procedures such as importance sampling, cross-validation and bagging. Note in some of these cases the estimators $\hat{\theta}_i$ are no longer independent, as was assumed throughout the paper, but the conditional results still hold, and it is interesting to see how the Bayesian median of means fares in this context.

\vspace{5mm}

\begin{algorithm}
\caption{Bayesian Median of Means}\label{alg:bmm}
\begin{algorithmic}[1]
\Procedure{BMM}{$\{\hat{\theta}_i\}_{i=1}^n$, $J=n$, $\alpha=1$}
\For{$j=1, \ldots, J$}
\State draw $\mathbf{p}^{(j)} \sim \Dir_n(\alpha, \ldots, \alpha)$
\State set $Y_j = \sum_{i=1}^{n} p^{(j)}_i \hat{\theta}_i$
\EndFor
\State \textbf{return} $\BMM = \widhat{\med}(Y_1, \ldots, Y_J)$
\EndProcedure
\end{algorithmic}
\end{algorithm}

\subsection{Comparison with sample mean and median of means}
\label{sec:comparison_with_sample_mean_and_median_of_means}

Recall both $\BMM$ and $\MM$ can be thought of as interpolating between the sample mean and the sample median. The simulations below explore how $\BMM$, $\aBMM$ and $\MM$ fare in settings that favor these extremes. Since the the Bayesian median of mean has its hyperparameter set at $\alpha=1$, a fair comparison would have the median of means with as few groups $g$ as possible. Since $g=1$ and $g=2$ lead to the sample mean, $g=3$ is used in the examples below.

\begin{example}[Skewnormal]
Suppose $\hat{\theta}_i \iid \text{SkewN}(\xi, \sigma, \lambda)$, with location parameter $\xi=0$, scale $\sigma=1000$ and shape $\lambda=0$ (so this is a Normal distribution). The boxplot of $\SM$, $\aBMM$, $\BMM$, $\MM$ and $\widhat{\med}(\hat{\theta}_1, \ldots, \hat{\theta}_n)$, evaluated over $1000$ simulations, is shown on the left side of Figure \ref{fig:compare_estimators_skewnormal}. The numbers in the figure correspond to the mean squared error of each estimator. As expected, all estimators are unbiased, but the median has greater variance, contributing to worse performance. The median of means, being closer to the median than the Bayesian median of means, shows slightly degraded results. Both the Bayesian median of means and its approximation, as well as the sample mean, exhibit similar performance.

On the other hand, consider increasing the skewness of the distribution to $\lambda=40$. Now, the underlying distribution of the $\hat{\theta}_i$ is no longer symmetric, so the median becomes severely biased, as shown on the right side of Figure \ref{fig:compare_estimators_skewnormal}. Note, however, that $\BMM$ and $\aBMM$ incur minimal bias and still have a similar performance to $\SM$.
\end{example}

\begin{figure}[htbp]
\centering {\fontfamily{cmss}\selectfont\text{\textbf{Boxplots for Skewnormal example}}}
 \begin{center} \includegraphics[width=.49\textwidth]{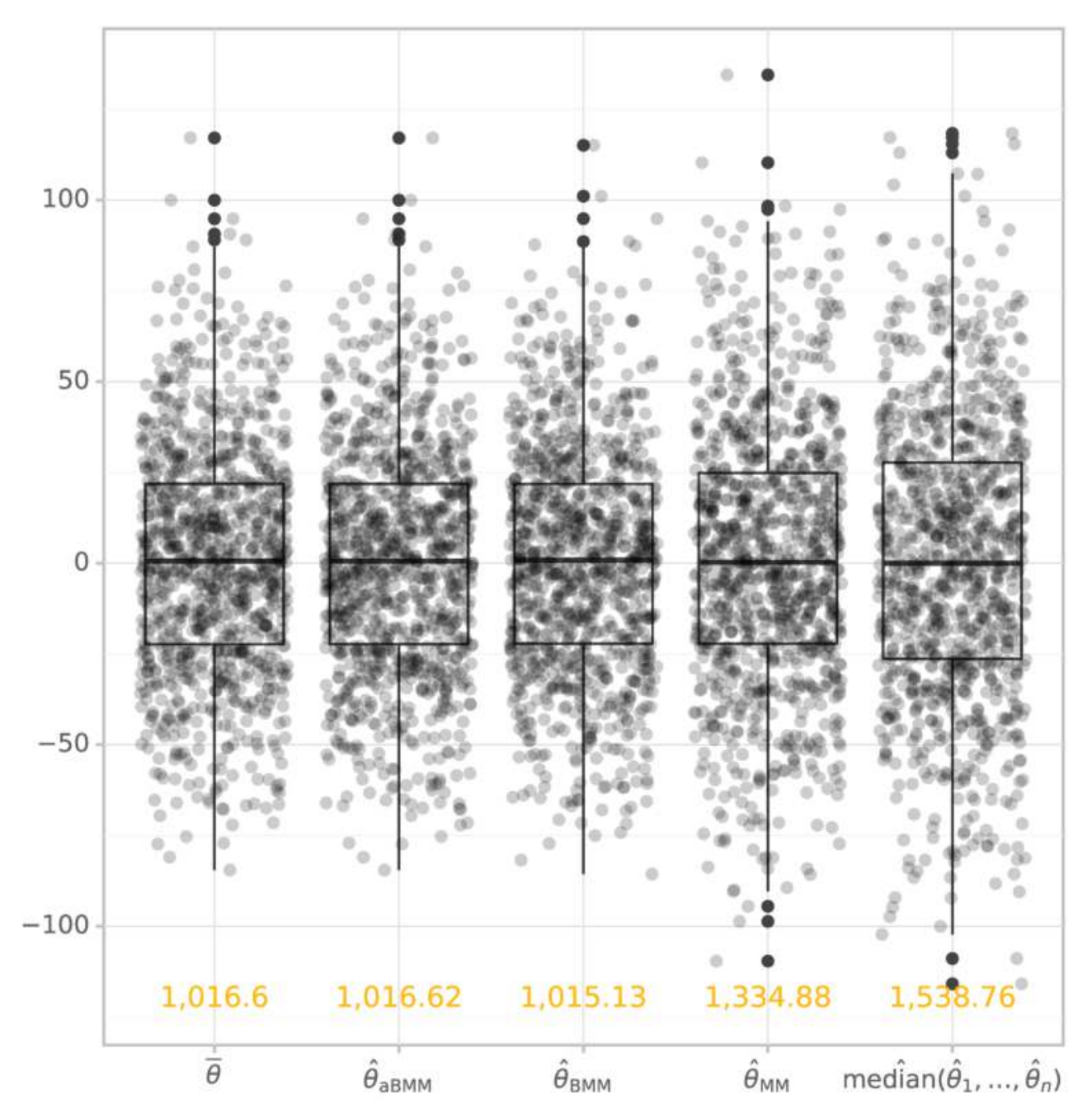}
\includegraphics[width=.49\textwidth]{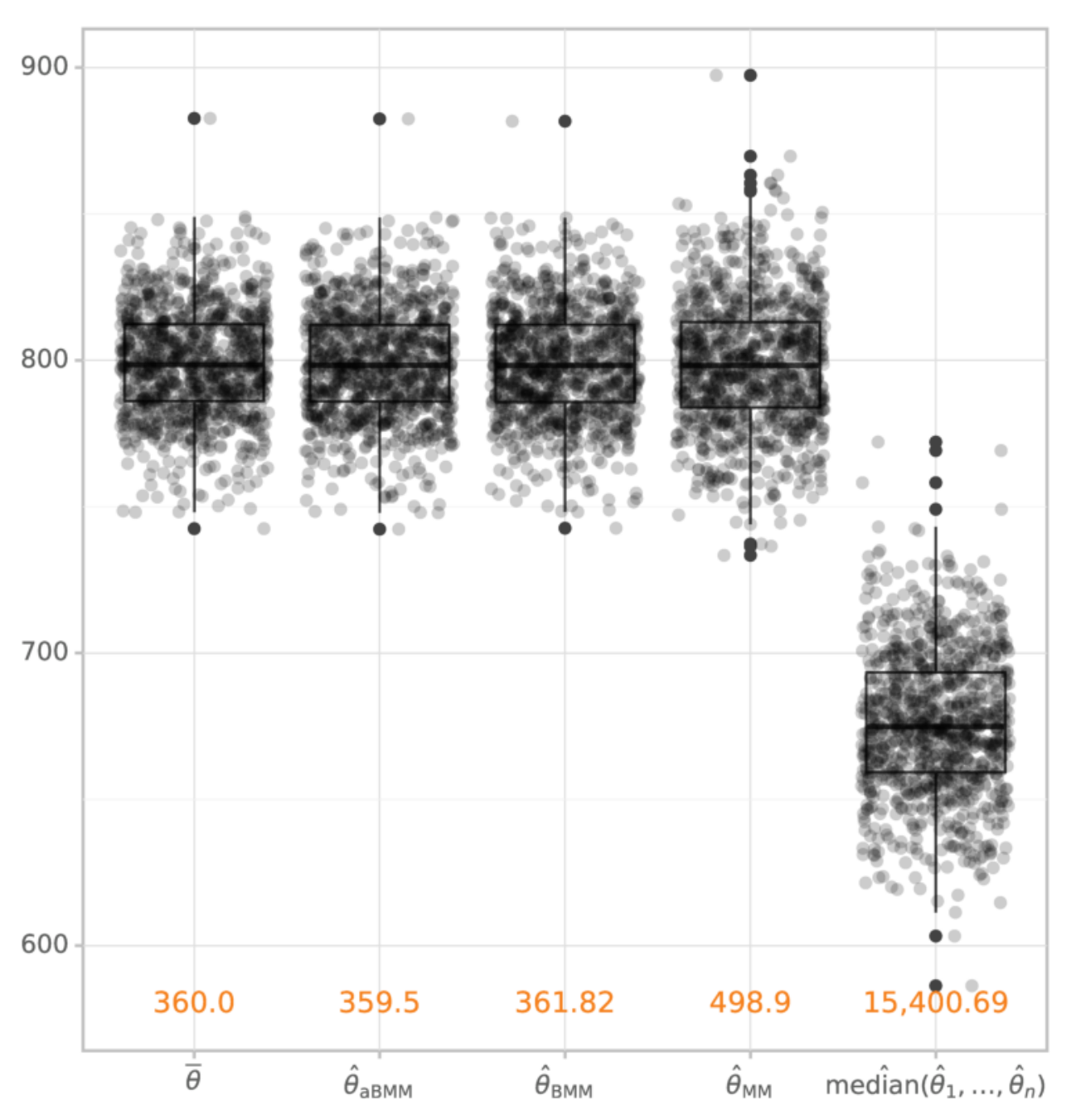}
\caption{Boxplot for different estimators in the Skewnormal example. The left figure has zero skew; the right figure has high skew. The numbers indicate the mean squared error of each estimator.}
\label{fig:compare_estimators_skewnormal}
 \end{center}
\end{figure}

\begin{example}[Pareto]
Let $\hat{\theta}_i \iid \text{Pareto}(\xi, \sigma, \lambda)$ with location parameter $\xi=0$, scale $\sigma=1000$ and two possible shapes $\lambda=4$ (lower skewness) and $\lambda = 2.5$ (higher skewness). In both cases, displayed in Figure \ref{fig:compare_estimators_pareto}, there is enough bias in using the median that it performs far worse than any other estimator. The sample mean works well when the skewness is small, but it is still comparable to both $\BMM$ and $\aBMM$. Once the skewness and variance increase, the sample mean performs worse than either, particularly because it gives weights to extreme sample points. The median of means exhibits slightly worse performance than both the exact and approximate Bayesian median of means.
\end{example}

\begin{figure}[htbp]
\centering {\fontfamily{cmss}\selectfont\text{\textbf{Boxplots for Pareto example}}}
 \begin{center} \includegraphics[width=.49\textwidth]{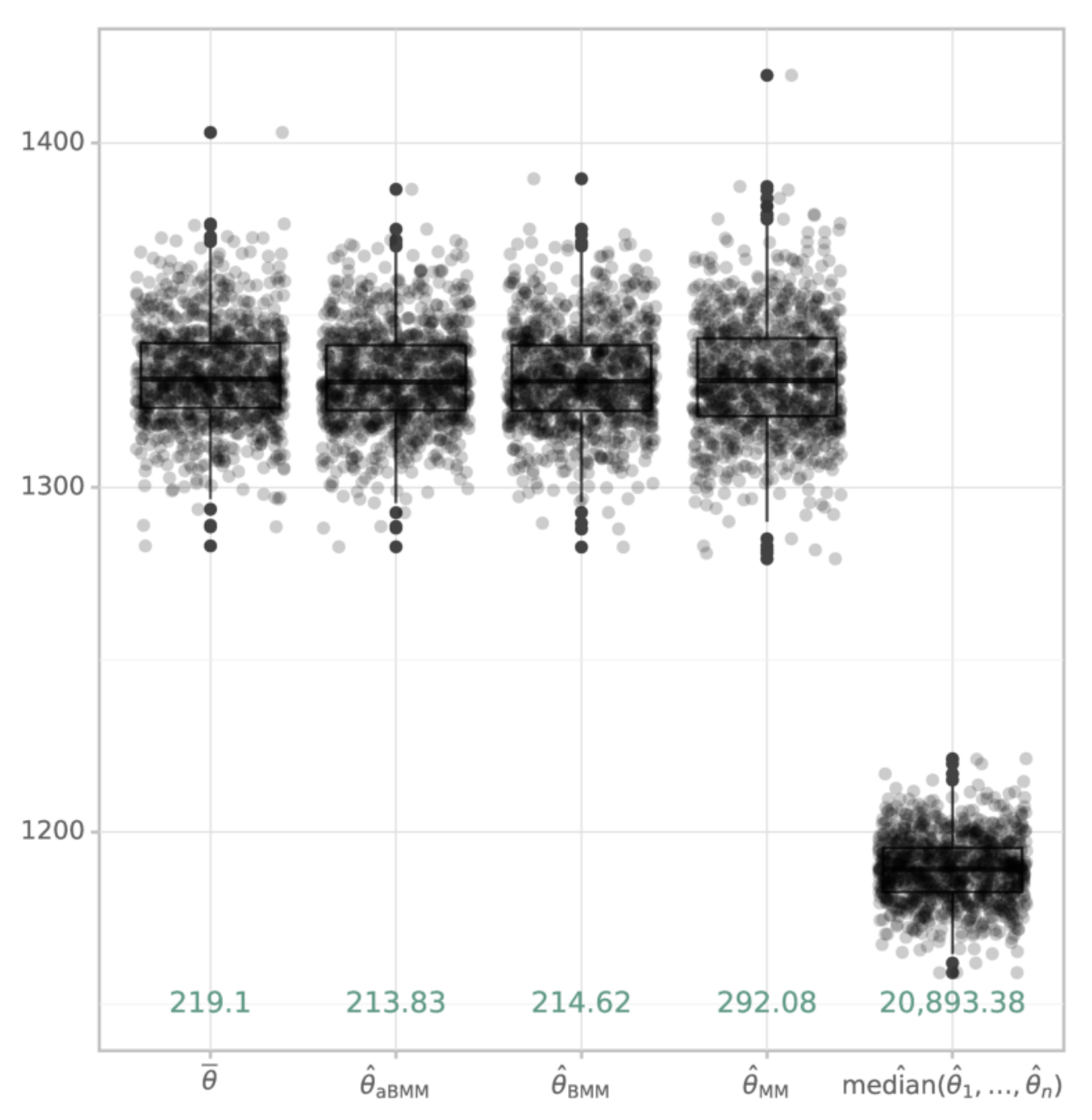}
\includegraphics[width=.49\textwidth]{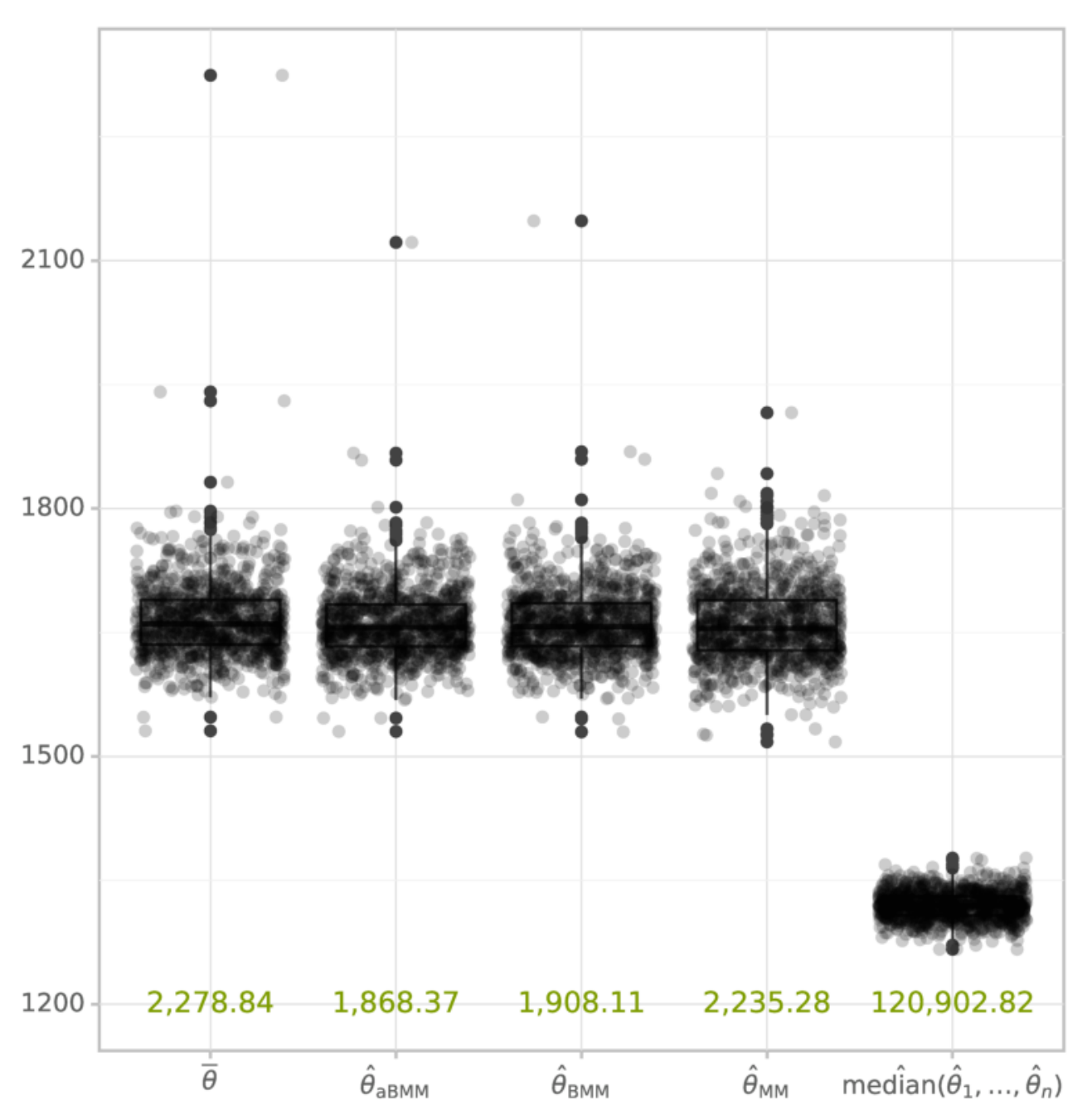}
\caption{Boxplot for different estimators in the Pareto example. The left figure has low skewness; the right figure is higher skewed. The numbers indicate the mean squared error of each estimator.}
\label{fig:compare_estimators_pareto}
 \end{center}
\end{figure}

\begin{example}[Discrete]
Assume now a distribution that is particularly favorable to the sample median relative to the sample mean: suppose $\hat{\theta}_i$ is distributed as
\begin{equation*}
\begin{cases} 
	n^2 \sigma &\mbox{with probability } \frac{1}{2n^p} \\ 
	0		   &\mbox{with probability } 1-\frac{1}{n^p} \\ 
	-n^2 \sigma &\mbox{with probability } \frac{1}{2n^p}. 
\end{cases}
\end{equation*}
Note $\E[\hat{\theta}_i]=0$ and the distribution is symmetric, so the median is unbiased. On the other hand, the variance is $\V[\hat{\theta}_i]=n^{4-p} \sigma^2$. For almost all samples drawn $\hat{\theta}_i=0$, which is the true value of the parameter. Rarely, however, $\hat{\theta}_i$ attains a large value that is capable of throwing the sample mean off, while the sample median remains immune. 

Figure \ref{fig:compare_estimators_discrete} shows the result of the different estimators when $n=1000$, $\sigma=30$ and $p=1$ (high probability of extremes) or $p=1.5$ (lower). Note in both cases the sample median attains zero mean squared error, while the sample mean behaves poorly. When $p$ is bigger, the extreme values seldom survive the median operation, so the median of means achieves good performance, and less so the Bayesian median of means, as it doesn't give weight zero to any single $\hat{\theta}_i$. When the probability of extreme events increase, the median of means becomes relatively more susceptible to extreme measurements, and displays worse results. Also, because the underlying distribution is symmetric, $\widhat{\skew}(\hat{\bm{\theta}})\approx0$, so $\aBMM$ is virtually the same as the sample mean. In this case, the approximation fails, as the neglected higher-order terms in the expansion become consequential.
\end{example}

\begin{figure}[htbp]
\centering {\fontfamily{cmss}\selectfont\text{\textbf{Boxplots for discrete example}}}
 \begin{center} \includegraphics[width=.49\textwidth]{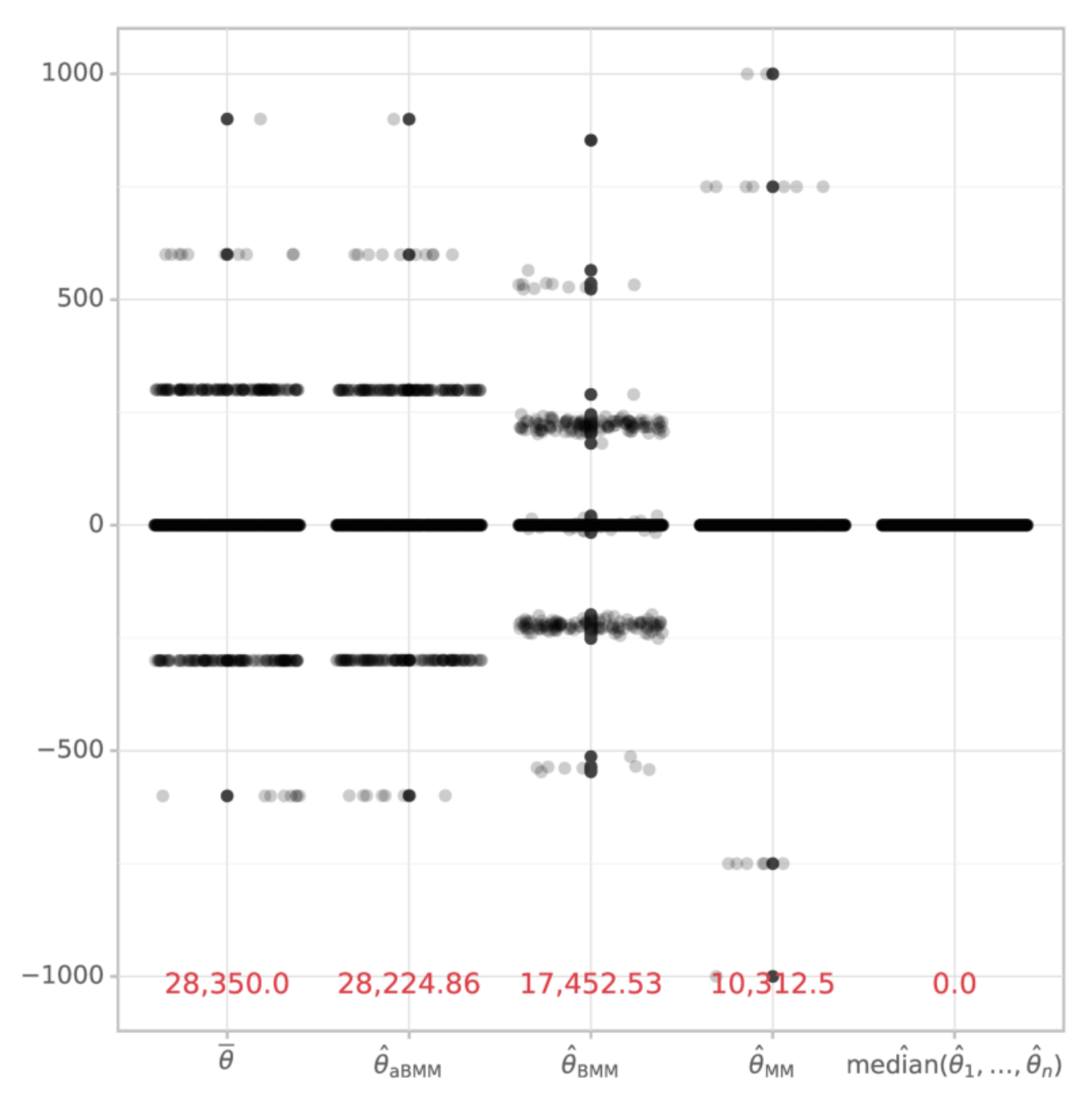}
\includegraphics[width=.49\textwidth]{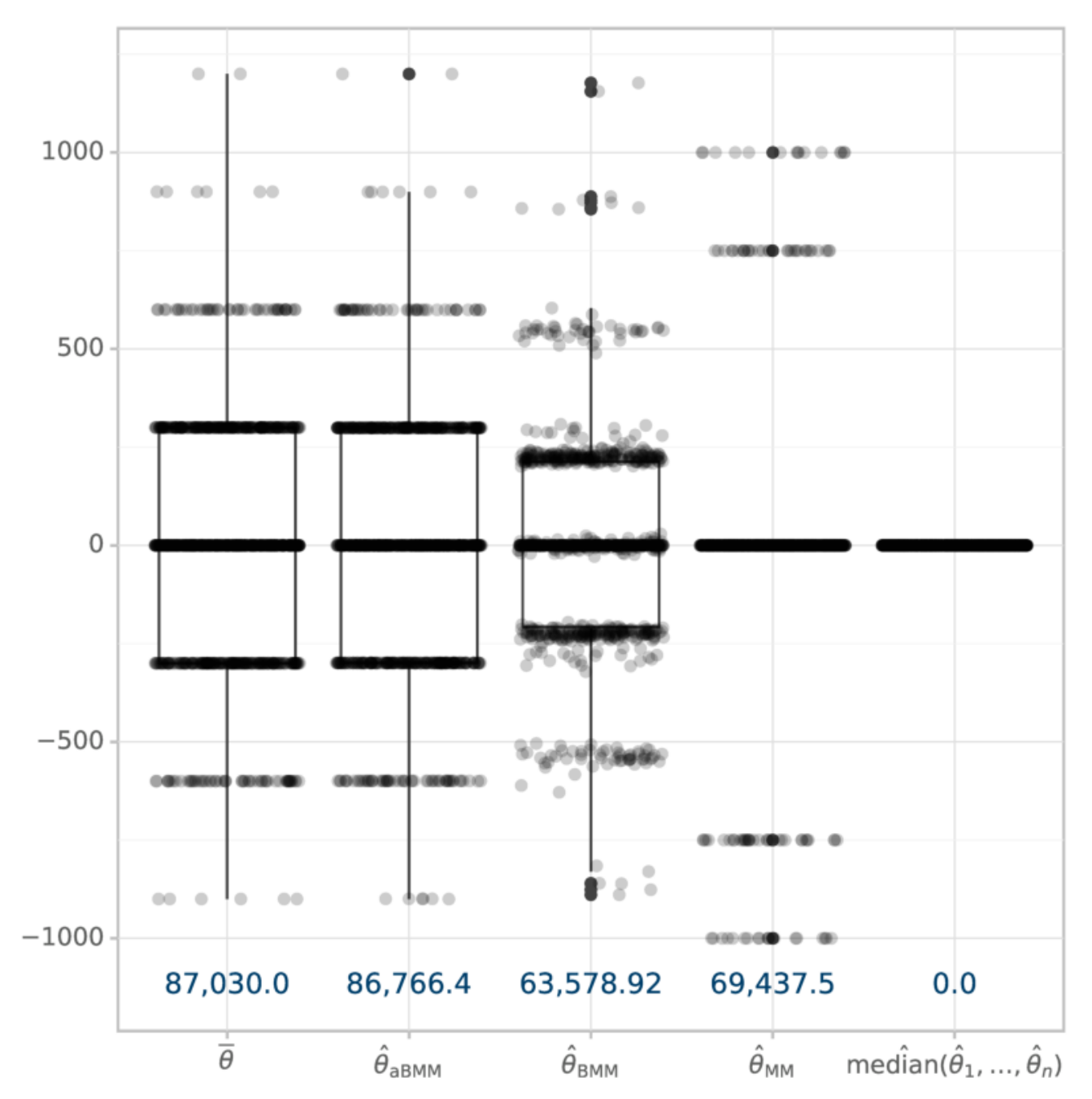}
\caption{Boxplot for different estimators in the discrete example. The left figure has low probability of extreme events; the right figure's probability is higher. The numbers indicate the mean squared error of each estimator.}
\label{fig:compare_estimators_discrete}
 \end{center}
\end{figure}

The behavior of the estimators in this last example can be understood as follows. While the median of means is much better protected against extremes because of the median operation, it is less efficient in the sense that it underutilizes the data, as the final estimator has only used $n/g$ datapoints, where $g$ is the number of groups. The Bayesian median of means is somewhere in between the two. Indeed, write
\begin{align*}
 \SM = \widhat{\med} (\SM, \ldots, \SM), \qquad
 \BMM = \widhat{\med} (Y_1, \ldots, Y_J), \qquad
 \MM = \widhat{\med}(\overline{\theta}_1, \ldots, \overline{\theta}_g).
\end{align*}
In each case, $\E[\SM]=\E[Y_1]=\E[\overline{\theta}_1]$, but the variances are very different: 
\begin{equation*}
 \V[\SM] = \frac{\sigma^2}{n}, \qquad \V[Y_1] = \frac{\sigma^2}{n} \frac{n(\alpha+1)}{n\alpha+1}, \qquad \V[\overline{\theta}_1] = \frac{\sigma^2g}{n}.
\end{equation*}
It is clear $\V[Y_1] \geq \V[\SM]$, while $\V[\overline{\theta}_1]\geq\V[Y_1]$ if $n\alpha \geq (\alpha+1)n/g$. With $\alpha=1$, this is always the case (since $g\geq2$), and the inequality becomes more pronounced as $\alpha \to \infty$, that is, as $\BMM \to \SM$.

However, while the terms in $\BMM$ have less variance than those of $\MM$, the Bayesian median of means is less immune to extremes. Indeed, the blocks $\overline{\theta}_i$ are completely independent of each other, while the $Y_j$ are dependent, as they all rely on the same values of $\hat{\theta}_1, \ldots, \hat{\theta}_n$. Still, this dependence is weaker than that of the sample mean, where all terms are equal to $\overline{\theta}$. In particular, the sample mean has an asymptotic breakdown point of zero, so it suffices to have one outlier to severely alter it, while the median of means has an asymptotic breakdown point equal or bigger to $g/2n$, since at least $g/2$ outliers are necessary, but not sufficient, to damage it. The Bayesian median of means sits somewhere in-between as $\alpha$ varies, though the notion of breakdown point is harder to quantify in this case.

\vspace{5mm}

Consider now how the estimators behave as $n$ changes. Recall that as $n \to \infty$, both $\BMM$ and $\SM$ should become indistinguishable. In the examples below, $n=0, 10, 20, 30, \ldots, 2000$, and $100$ simulations are used for each $n$ to obtain the mean squared error. As before, the median of means estimators uses $g=3$ groups. All the figures below have the $y$-axis in $\log_{10}$ scale.

\begin{example}[Exponential-$t$ distribution] Let $\hat{\theta}_1, \ldots, \hat{\theta}_n \iid  \text{Expo}(\lambda)+t_{2.5}(0, \sigma)$, where $\sigma=\frac{1}{\lambda}=30$, so the $\hat{\theta}_i$ are not symmetric and have a distribution with relatively heavy tails. Figure \ref{fig:exponential_t} shows the decaying MSE for each estimator, except the median. Note the approximate and exact Bayesian median of means have virtually the same performance, while the sample mean exhibits consistently bigger MSE. The median of means displays the worst performance in this scenario because it relies too much on the median, which has MSE close to $17$ for large $n$ due to the bias.
\end{example}

\begin{figure}[htbp]
 \begin{center}
{\fontfamily{cmss}\selectfont\text{\textbf{MSE for Exponential-$t$ example}}}  \includegraphics[width=1\textwidth]{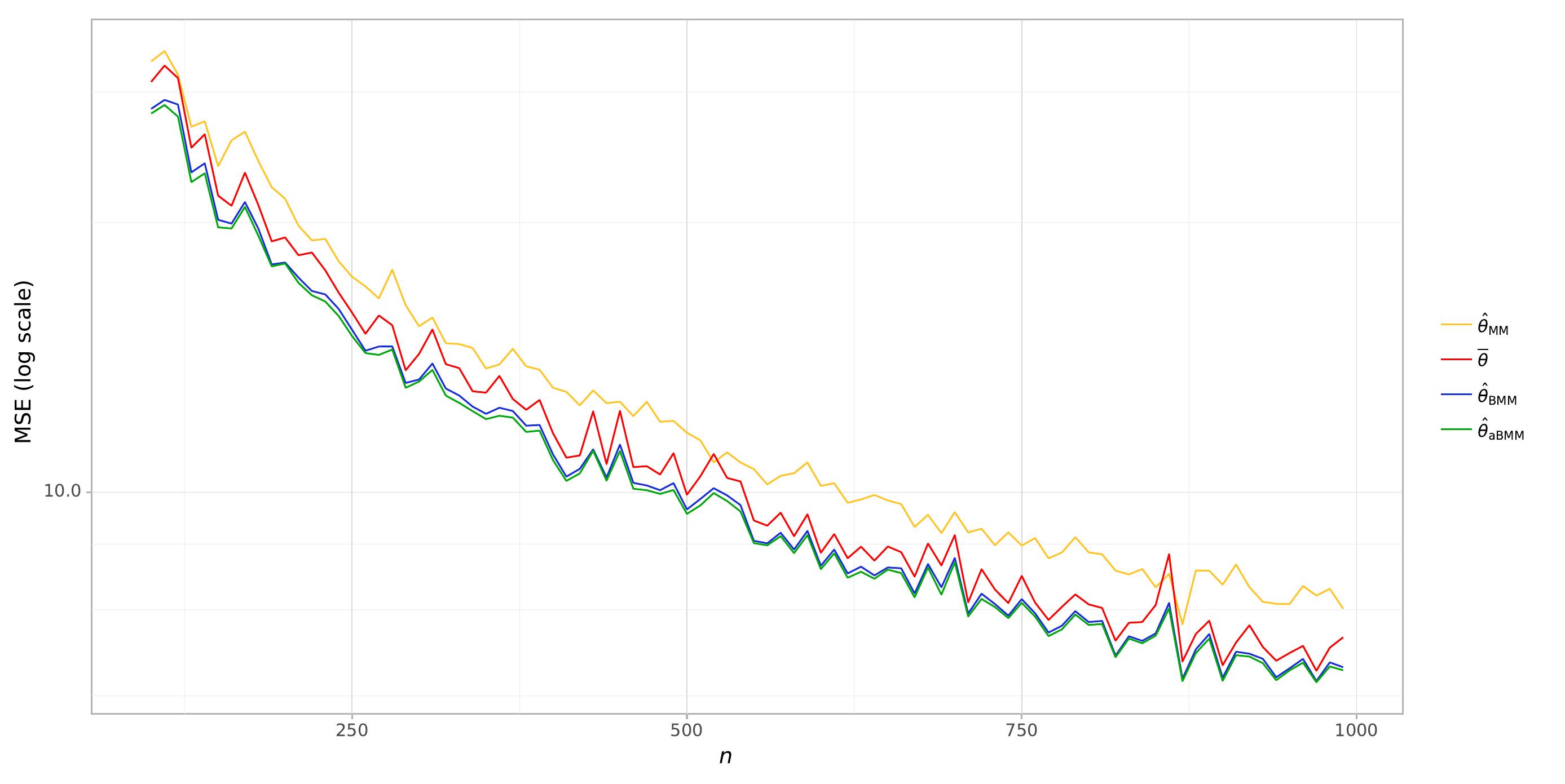}
\caption{Performance of $\SM$, $\BMM$, $\aBMM$ and $\MM$ in terms of MSE for Exponential-$t$ example. The $y$-axis is in the logarithmic scale. The MSE of $\hat{\med}(\hat{\theta}_1, \ldots, \hat{\theta}_n)$ stays around 17 as $n$ grows and is not shown.}
\label{fig:exponential_t}
 \end{center}
\end{figure}

\begin{example}[Lognormal]
Suppose $\hat{\theta}_i \iid \text{Lognormal}(\theta, \sigma^2)$, with $\mu=4$ and $\sigma=1$ (so the shape parameter is 1, the location is 0 and the scale is $e^4$). Recall the Lognormal distribution is in the exponential family, so the sample mean is the MLE. Figure \ref{fig:lognormal} shows the median of means has consistently worse performance. The Bayesian median of means and the sample mean are very close to each other, although the Bayesian median of means is generally better, in particular for small $n$. The approximate Bayesian median of means is not shown because it is virtually identical to its exact counterpart. The median is not shown because its MSE revolves around $1250$ and does not diminish with $n$ (the bias squared in using the median is $(e^{4.5}-e^{4})^2 \approx 1254$).
\end{example}

\begin{figure}[htbp]
 \begin{center}
	 {\fontfamily{cmss}\selectfont\text{\textbf{MSE for Lognormal example}}}  \includegraphics[width=1\textwidth]{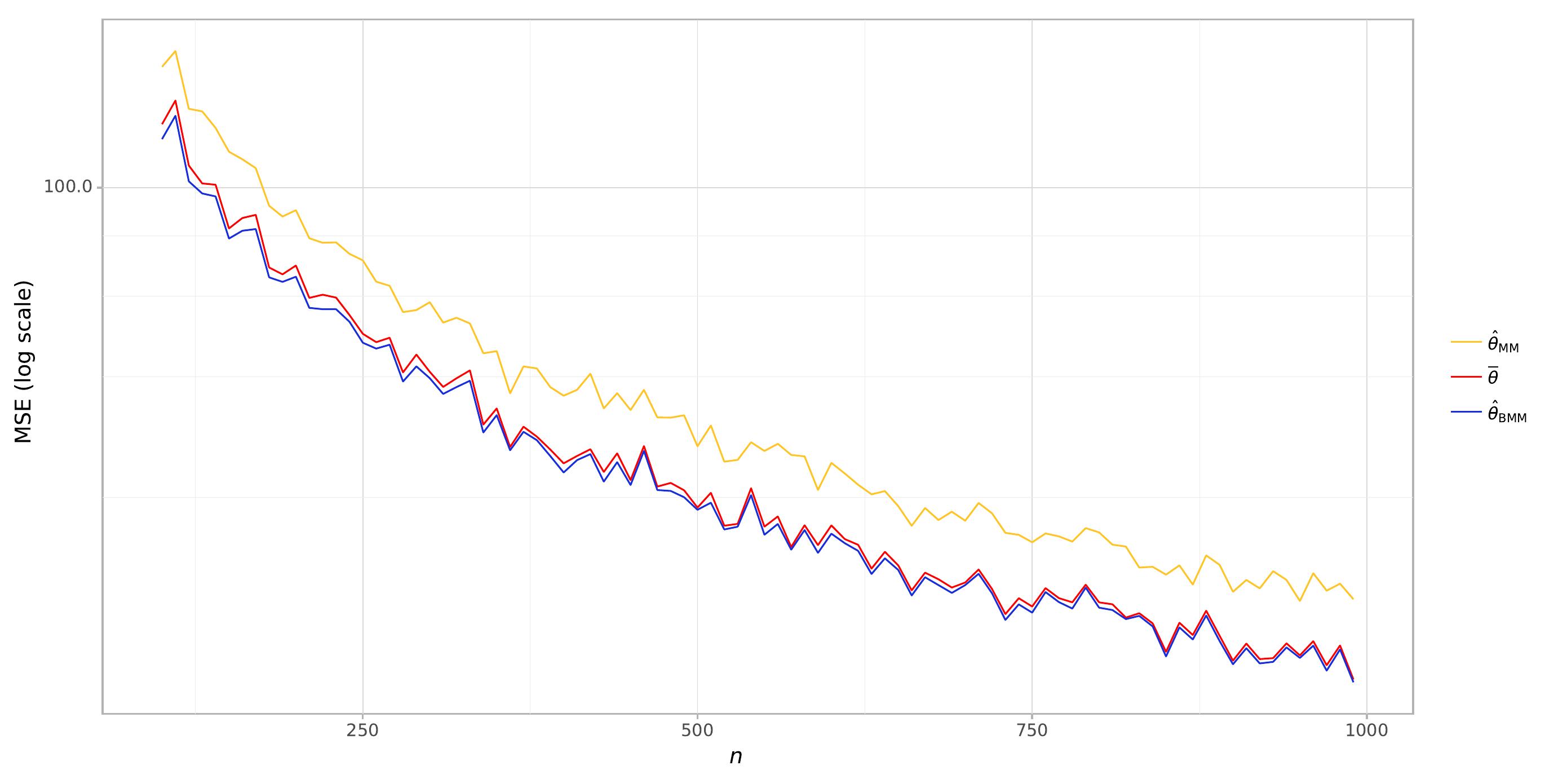}
\caption{Performance of $\SM$, $\BMM$ and $\MM$ in terms of MSE for Lognormal example. The $y$-axis is in the logarithmic scale. The curve for $\aBMM$ is not shown since it is virtually the same as that of $\BMM$; also not shown is $\hat{\med}(\hat{\theta}_1, \ldots, \hat{\theta}_n)$ which has MSE around $1250$ and not decreasing with $n$.}
\label{fig:lognormal}
 \end{center}
\end{figure}

\begin{example}[Maximal bias] For a given variance level, Proposition \ref{prop:mean-median} ensures that the maximum distance between mean and median is the standard deviation. This can be achieved by sampling $\hat{\theta}_i$ as 
\begin{equation*}
 \begin{cases} \sigma &\mbox{with probability } \frac{1}{2} + \varepsilon  \\ -\sigma  &\mbox{with probability } \frac{1}{2} - \varepsilon, \end{cases}
\end{equation*}
for sufficiently small $\varepsilon$, so $\theta\approx0$ while $\med(\hat{\theta}_1)=\sigma$. Note this represents the worst case for any median-based procedure, since the bias is maximal. Figure \ref{fig:worst_case} shows the result of applying the sample mean, the Bayesian median of means and the median of means to this distribution. For small $n$ the sample mean dominates by a small amount, but that gap quickly disappears as $n$ increases.
\begin{figure}[htbp]
 \begin{center}
	 {\fontfamily{cmss}\selectfont\text{\textbf{MSE for maximal bias example}}}  \includegraphics[width=1\textwidth]{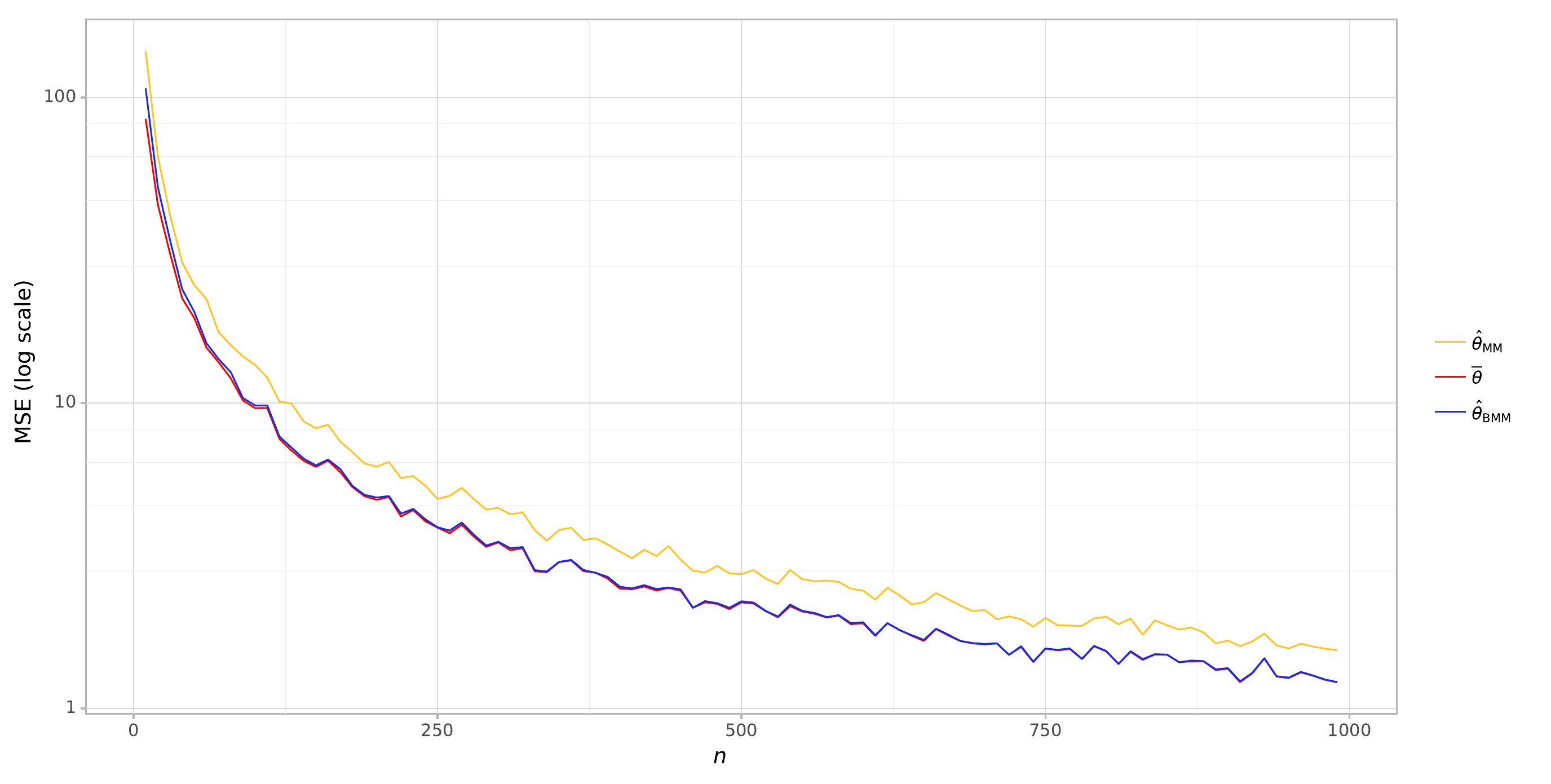}
\caption{Performance of $\SM$, $\BMM$ and $\MM$ in terms of MSE for the maximal bias example. The $y$-axis is in the logarithmic scale. The sample mean has better MSE, but the gap quickly disappears. The curve for $\aBMM$ is not shown since it is virtually the same as that of $\BMM$; also not shown is $\hat{\med}(\hat{\theta}_1, \ldots, \hat{\theta}_n)$ which has MSE around $880$}
\label{fig:worst_case}
 \end{center}
\end{figure}
\end{example}

\subsection{Confidence intervals} \label{sec:confidence intervals}

Constructing confidence intervals for $\BMM$ or $\aBMM$ is not so straightforward since the samples $Y_1, \ldots, Y_J$ are not independent, and confidence intervals for the median are usually based on asymptotic considerations. An alternative is to use the bootstrap. That is, sample $\hat{\theta}_1^{(b)}, \ldots, \hat{\theta}_n^{(b)}$ with replacement from $\{\hat{\theta}_i\}_{i=1}^n$, $b=1, \ldots, B$ times. For each bootstrap sample $b$, find the Bayesian median of means estimator, $\BMM^{(b)}$, and use the empirical distribution to generate confidence intervals for $\theta$ under the assumption that $P[\BMM- \theta \leq t] \approx P[\BMM^{(b)} - \BMM\leq t]$. The examples below show that this procedure yields coverage near the prescribed levels.

Unfortunately, it might be hard or unfeasible to obtain $B$ bootstrap samples for the Bayesian median of means since it involves $O(Bn^2)$ operations. A computational shortcut is to fix the Dirichlet draws $\mathbf{p}^{(1)}, \ldots, \mathbf{p}^{(n)}$ for all $b=1, \ldots, B$ samples, reducing the complexity to $O(Bn)$, which is considerably faster and similar to bootstrapping the sample mean. One would not expect this to significantly change the estimates, since the $\hat{\theta}_i^{(b)}$ are being sampled independently with replacement, but it does add correlation across the samples. In the examples below, the effect of fixing the Dirichlet draws is negligible, while the computational speedup is considerable. Also, both percentile and BCa intervals were analyzed, and the difference was again minor, likely due to the stabilizing effect of the Dirichlet averages. See Algorithm \ref{alg:ci_bmm} for the full description of the percentile bootstrap used.

\begin{algorithm}
\caption{Confidence Interval for Bayesian Median of Means}\label{alg:ci_bmm}
\begin{algorithmic}[1]
\Procedure{CI\_BMM}{$\{\hat{\theta}_i\}_{i=1}^n$, $\tilde{\alpha}=0.05$, $J=n$, $\alpha=1$}
\For{$j=1, \ldots, J$}
\State draw $\mathbf{p}^{(j)} \sim \Dir_n(\alpha, \ldots, \alpha)$
\EndFor
\For{$b=1, \ldots, B$}
\State sample $\hat{\theta}^{(b)}_1, \ldots, \hat{\theta}^{(b)}_1$ with replacement from $\{\hat{\theta}_i\}_{i=1}^n$
\State let $Y_j^{(b)} = \sum_{i=1}^{n} p^{(j)}_i\hat{\theta}_i^{(b)}$
\State let $\BMM^{(b)} = \widhat{\med}(Y_1^{(b)}, \ldots, Y_J^{(b)})$
\EndFor
\State set $(l_{\tilde{\alpha}}, u_{\tilde{\alpha}})$ to be the $\tilde{\alpha}/2$ and $1-\tilde{\alpha}/2$ quantiles of the empirical distribution of $\{\BMM^{(b)}\}_{b=1}^B$
\State \textbf{return} $(l_{\tilde{\alpha}}, u_{\tilde{\alpha}})$
\EndProcedure
\end{algorithmic}
\end{algorithm}

\begin{example}[CIs for Exponential, Pareto and Normal distributions] Figure \ref{fig:ci} shows the confidence intervals at the $95\%$ level for $\BMM$ when (i) $\hat{\theta}_i \iid \text{Expo}(1/3)+5$; (ii) $\hat{\theta}_i \iid \text{Pareto}(4, 10)$; and (iii) $\hat{\theta}_i \iid N(0, 1)$. The $x$-axis refers to $1000$ different draws of $\{\hat{\theta}_i\}_{i=1}^n$, the black dots represent $\BMM$ and the grey lines are the intervals. The dotted line in red is the actual value of $\theta$, and the red crosses are instances where the interval doesn't cover. Note the coverage over these $1000$ draws is close to the nominal level of $95\%$.
\begin{figure}[htbp]
 \begin{center}
{\fontfamily{cmss}\selectfont \textbf{Confidence intervals for $\BMM$}}
\vspace{5mm}
 \includegraphics[width=.78\textwidth]{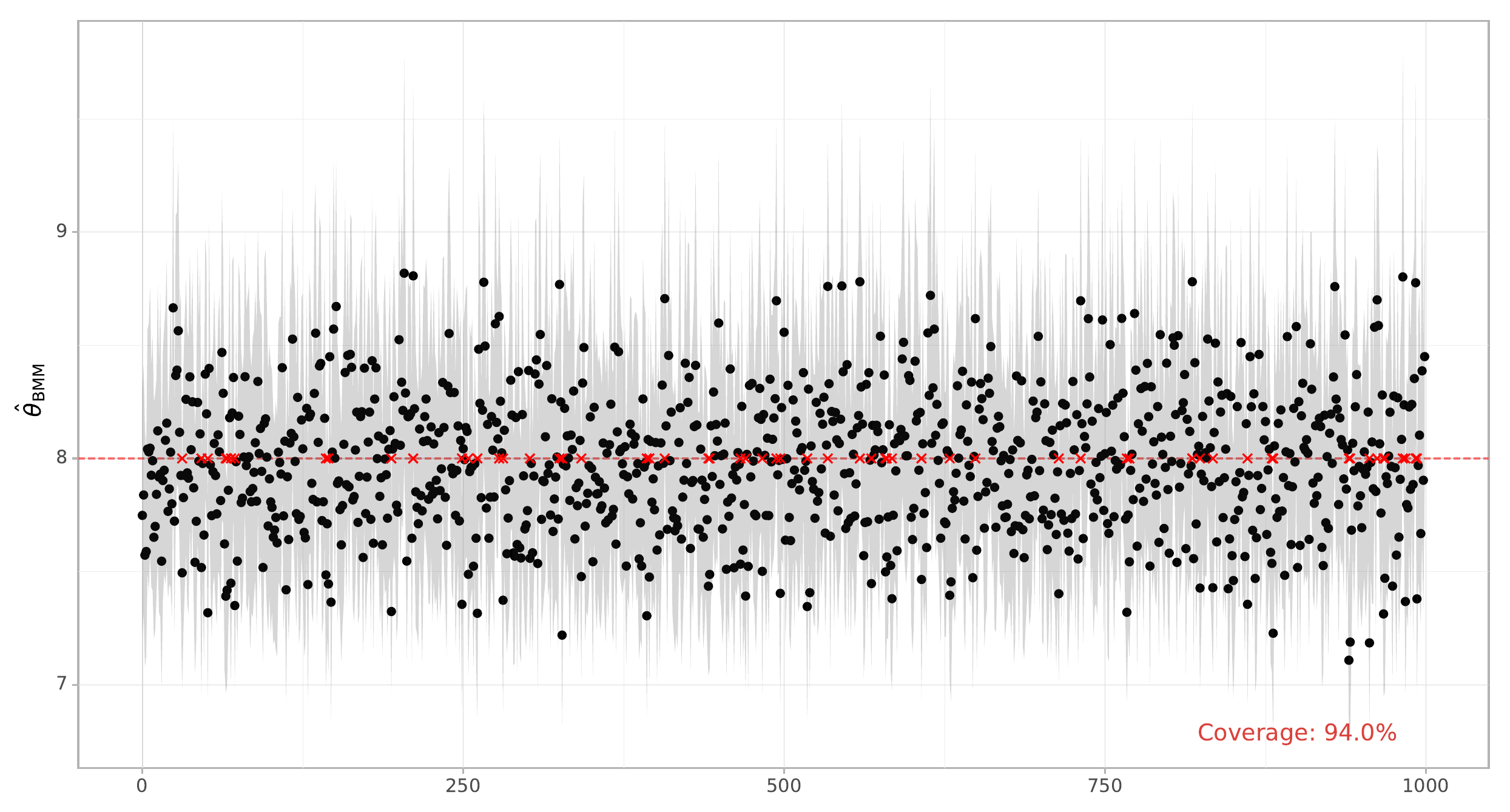}
\includegraphics[width=.78\textwidth]{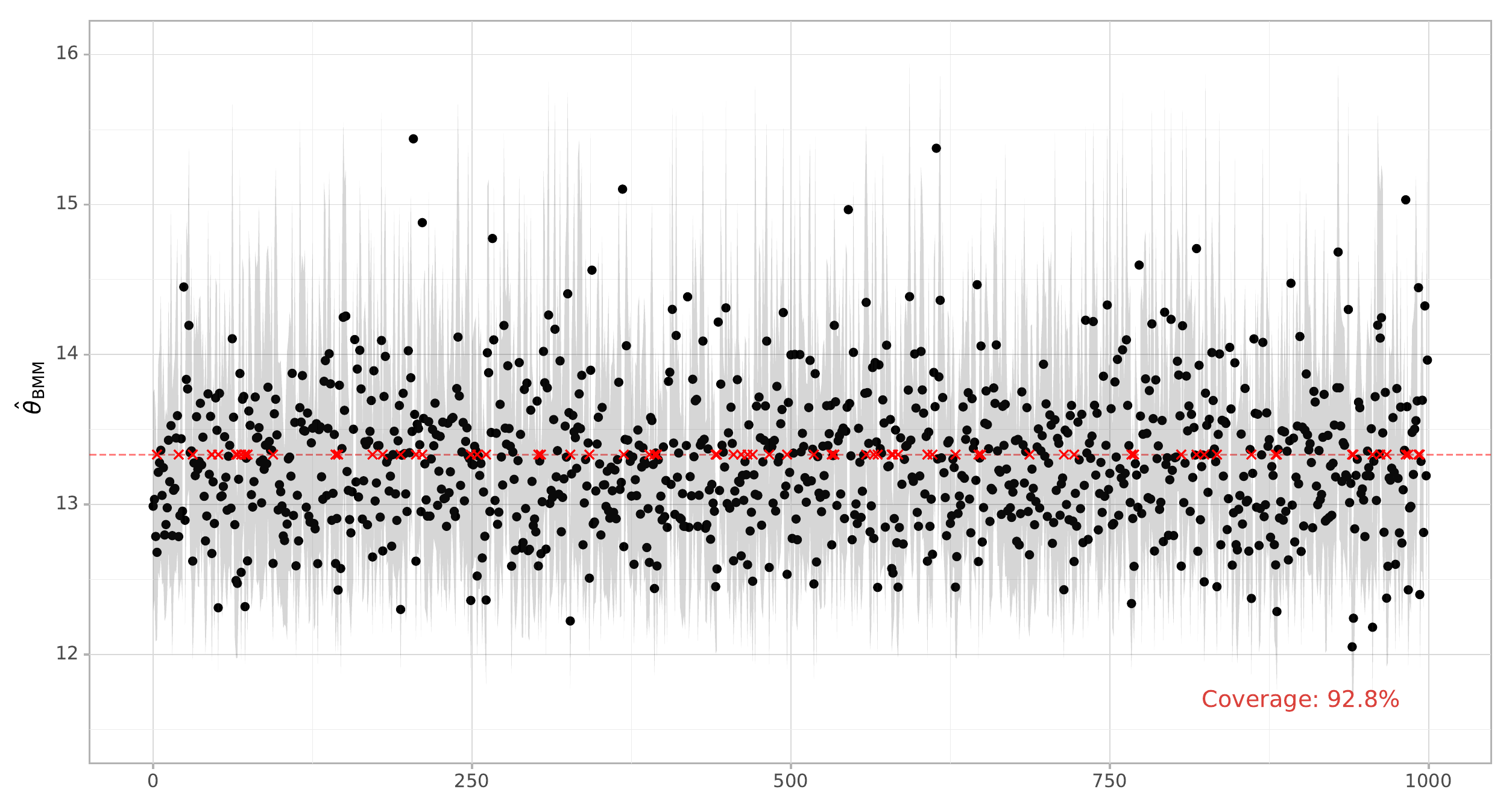}
\includegraphics[width=.8\textwidth]{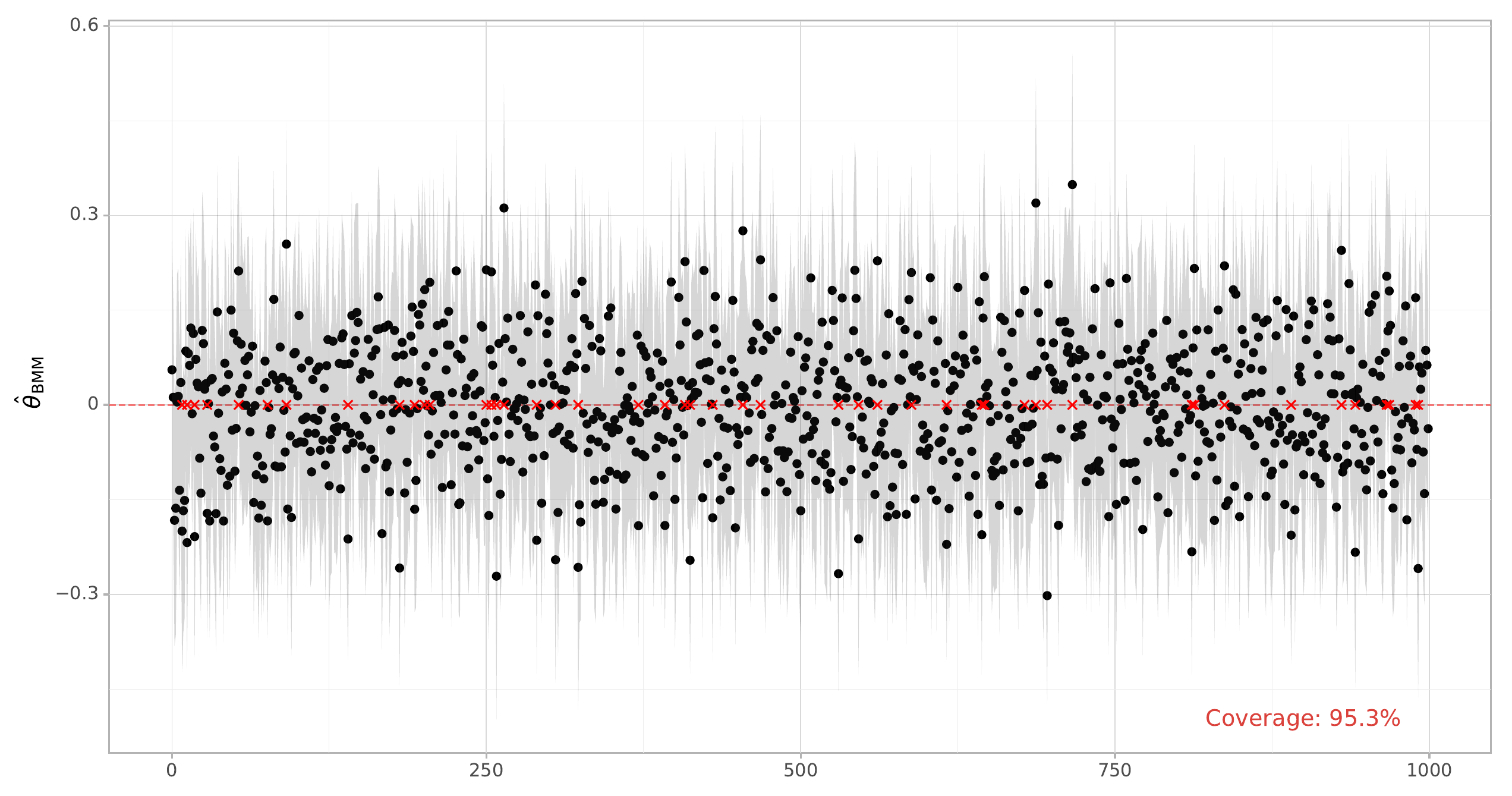}
\caption{Bootstrapped confidence interval and coverages for $\BMM$ and (i) $\hat{\theta}_i \iid \text{Expo}(1/3)+5$; (ii) $\hat{\theta}_i \iid \text{Pareto}(4, 10)$; (iii) $\hat{\theta}_i \iid N(0, 1)$. The red line indicates $\theta$; crosses indicate when CI does not cover.}
\label{fig:ci}
\end{center}
\end{figure}
\end{example}

\subsection{Applications} \label{sec:applications}

Many statistical procedures are based on the idea of aggregating multiple unbiased estimators. This section considers the performance of the Bayesian median of means in importance sampling, bagging and cross-validation. The takeaway is that as long as the estimators being aggregated are low variance and low skewness, the Bayesian median of means performs well and comparable to the sample mean. However, once the estimators exhibit high variance then the sample mean will frequently underperform, and using the Bayesian median of means yields significant gains in mean squared error. 

\subsubsection{Importance sampling}

A class of problems that depend on the aggregation of unbiased estimates with potentially huge variance is importance sampling, as previously illustrated in Example \ref{example:IS_intro_example_MSE}. As another application of importance sampling, consider estimating the number of Fibonacci permutations.

A \emph{Fibonacci permutation} refers to a permutation of $m$ objects such that element $i$ is restricted to positions $i-1$, $i$ or $i+1$ ($1$ can be placed in position $1$ or $2$, and $m$ can placed in position $m-1$ or $m$). For example, if $m=7$, then
\begin{equation*}
[2, 1, 3, 4, 5, 7, 6], \quad [1, 3, 2, 4, 6, 5, 7], \quad [2, 1, 4, 3, 5, 7, 6], \quad [1, 2, 4, 3, 5, 6, 7], \quad [2, 1, 4, 3, 6, 5, 7]
\end{equation*}
are examples of Fibonacci permutations, while $[3, 1, 2, 4, 5, 7, 6]$, $[1, 2, 5, 4, 3, 6, 7]$ and $[2, 1, 4, 3, 6, 7, 5]$ are not. How to efficiently estimate the number of Fibonacci permutations for arbitrary $m$?

On the one hand, it is not hard to see that the number of such Fibonacci permutations is given by $\text{Fib}(m+1)$, where $\text{Fib}(m)$ denotes the $m$-th element in the Fibonacci sequence, $0, 1, 1, 2, 3, 5, 8, \ldots$. Indeed, let $P_m$ be the number of Fibonacci permutations of $m$ elements. Object $1$ can be placed in positions $1$ or $2$; if in position $1$, then there are $P_{m-1}$ possibilities left; if in position $2$, then position $1$ must necessarily have object $2$, and thus there are $P_{m-2}$ possibilities left. Hence, $P_m = P_{m-1} + P_{m-2}$ and $P_1=1$ give the Fibonacci recursion, so $P_m = \text{Fib}(m+1)$.

On the other hand, importance sampling can also be used for this task. The fact that the solution is known means it is possible to faithfully investigate whether using the Bayesian median of means yields any improvement to the method. Also, there are many other similar problems that do not admit closed-form solutions (see \cite{diacoms2001statistical}), so this serves as a benchmark. 

To use importance sampling, let $Z_m$ denote the number of  Fibonacci permutations of $m$ objects, which is the target of the exercise, and take $p(x) = \frac{1}{Z_m}$ to be the Uniform distribution, which is hard to sample from. Consider an alternative distribution that is easier to simulate: (i) the first object's position is to be sampled uniformly at random from $\left\{1, 2\right\}$; (ii) if the first object went to position $1$, then the second object's position is to sampled uniformly at random from $\left\{2, 3\right\}$; if the first object went to position $2$, then the second object must necessarily go to position $1$; (iii-) keep picking among available alternatives for each object's position until all objects are placed. This defines a sequence of conditional distributions for $X_i = (X_{i}^{(1)}, X_{i}^{(2)}, \ldots, X_{i}^{(m)})$:
\begin{equation*}
X_{i}^{(1)} \sim \text{Unif}\left\{1, 2\right\}, \quad X_{i}^{(2)} \sim \begin{cases} \text{Unif}\left\{2, 3\right\} &\mbox{if } X_{i}^{(1)}=1 \\ 1  &\mbox{if } X_{i}^{(1)}=2 \end{cases}, \quad X_{i}^{(3)} \sim \begin{cases} \text{Unif}\left\{3, 4\right\} &\mbox{if } X_{i}^{(2)}=2 \\ 2  &\mbox{if } X_{i}^{(2)}=3 \end{cases}, \quad \ldots.
\end{equation*}
Denote this distribution by
\begin{equation*}
q(X_i)=q_1(X_{i}^{(1)})q_2(X_{i}^{(2)}|X_{i}^{(1)})q_3(X_{i}^{(3)}|X_{i}^{(2)}) \cdots q_n(X_{i}^{(m)}|X_{i}^{(m-1)}),
\end{equation*}
and note that if $\eta_{i, j}$ is the number of neighboring positions available to place $X_i^{(j)}$, (so $\eta_{i, 1}=2$ for all $i$; $\eta_{i, 2}=2$ if $X_i^{(1)}=1$ and $\eta_{i, 2}=1$ if $X_i^{(1)}=2$, etc.), then
\begin{equation*}
\frac{1}{q(X_i)} = \eta_{i}^{(1)} \cdot \eta_{i}^{(2)} \cdots \eta_{i}^{(m)},
\end{equation*}
that is, $q(X_i)$ is the reciprocal of the product of the available neighbors for each position when constructed sequentially.

Now, since it is easy to sample from $q$, $Z_m$ can be estimated using the usual importance sampling estimator:
\begin{align*}
Z_m &= \E_p[Z_m] = \E_q\left[Z_m \frac{p(X)}{q(X)}\right] = \E\left[ \eta^{(1)} \cdot \eta^{(2)} \cdots \eta^{(m)} \right] \approx \frac{1}{n} \sum_{i=1}^{n}  \eta_{i}^{(1)} \cdot \eta_{i}^{(2)} \cdots \eta_{i}^{(m)},
\end{align*}
where $ \eta_{i}$ refers to the neighbors available in constructing $X_i$. This can be done using Algorithm \ref{alg:fib} below. 

\begin{algorithm}
\caption{Importance sampling Fibonacci estimate}\label{alg:fib}
\begin{algorithmic}[1]
\Procedure{IS\_Fib\_est}{$m$}
\State \textbf{set} $p=(-1, -1, \ldots, -1)$ where $\text{length}(p)=m$
\State \textbf{initialize} $\text{prod\_available\_neighbors}=1$, $i=0$
\While{$i<m$}
\If{$i=0$}: \State $\text{available\_neighbors}=\{0, 1\}$
\ElsIf{$i=m-1$}
\State $\text{available\_neighbors}=\{x \in \left\{m-2, m-1\right\} \ : \ p[x]=-1\}$
\Else
\State $\text{available\_neighbors}=\{x \in \left\{i-1, i, i+1 \right\} \ : \ p[x]=-1\}$
\EndIf
\State $\text{prod\_available\_neighbors}  = \text{length}(\text{available\_neighbors}) * \text{prod\_available\_neighbors}$
\State \text{chosen\_neighbor} = \text{sample}(\text{available\_neighbors})
\State $p[\text{chosen\_neighbor}] = i$
\If{$\text{chosen\_neighbor} \neq i$}: 
\State $p[i] = \text{chosen\_neighbor}$
\State $i = i+1$
\EndIf
$i = i+1$
\EndWhile
\State \textbf{return} $\text{prod\_available\_neighbors}$
\EndProcedure
\end{algorithmic}
\end{algorithm}

Figure \ref{fig:fibonacci_permutations_hist} shows histograms for $\hat{\theta}_i$, for $i=1, \ldots, 1000$, when $m=20, 50, 80, 100$. Note the problem becomes harder as $m$ increases, in particular due to the presence of extremely large but unlikely values of $\hat{\theta}_i$, corresponding to rare permutations (so $q(X_i)$ is very small, and thus $1/q(X_i)$ is very big). These large weights are essential for unbiasedness, but add substantial variance to the estimates. 

\begin{figure}[!ht]
\centering
{\fontfamily{cmss}\selectfont\text{\textbf{Histogram of individual importance sampling estimates}}}
\vspace{5mm}

\begin{subfigure}{.35\textwidth}
  \centering
  \includegraphics[width=\linewidth]{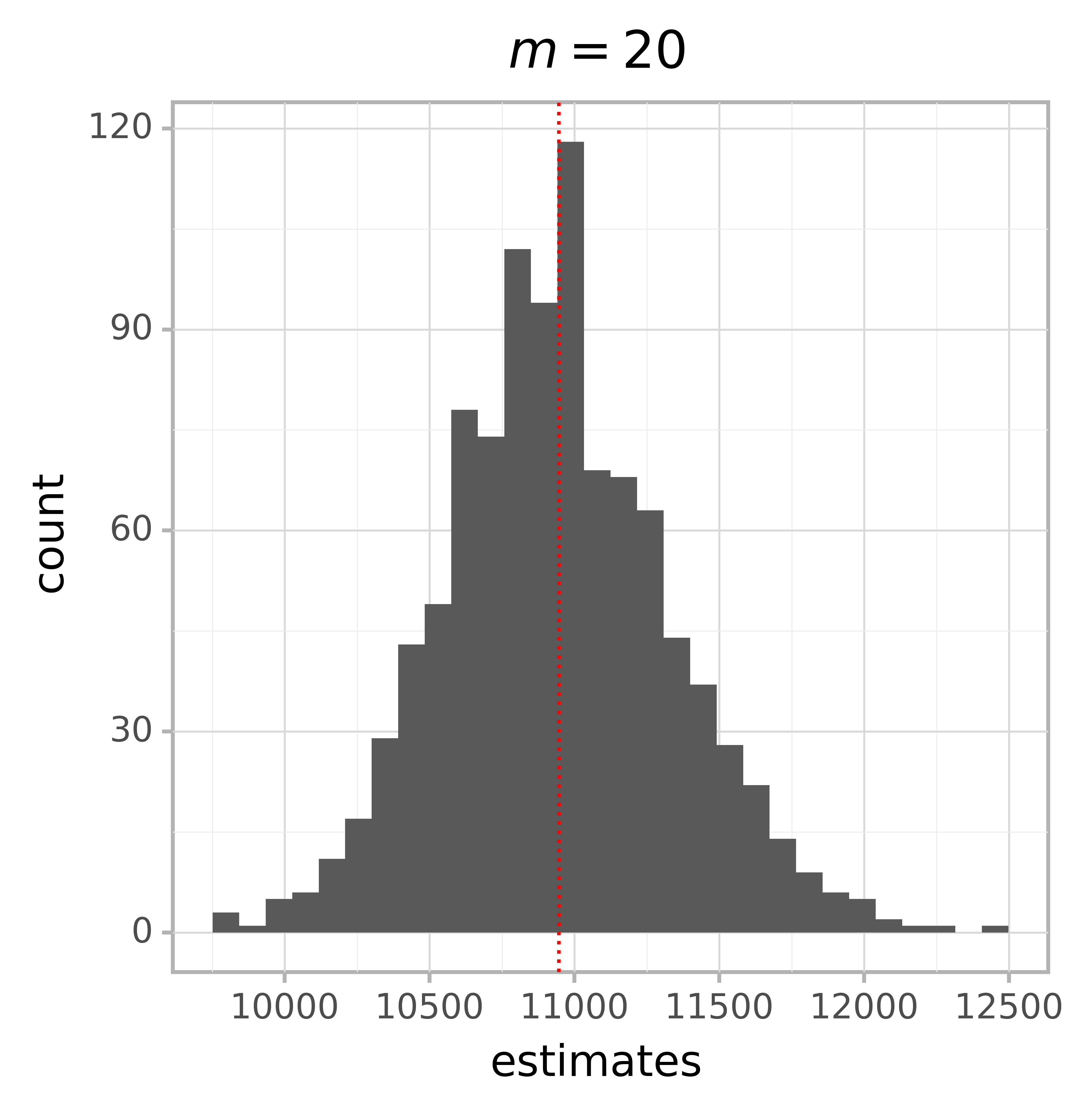}
\end{subfigure}%
\begin{subfigure}{.35\textwidth}
  \centering \includegraphics[width=\linewidth]{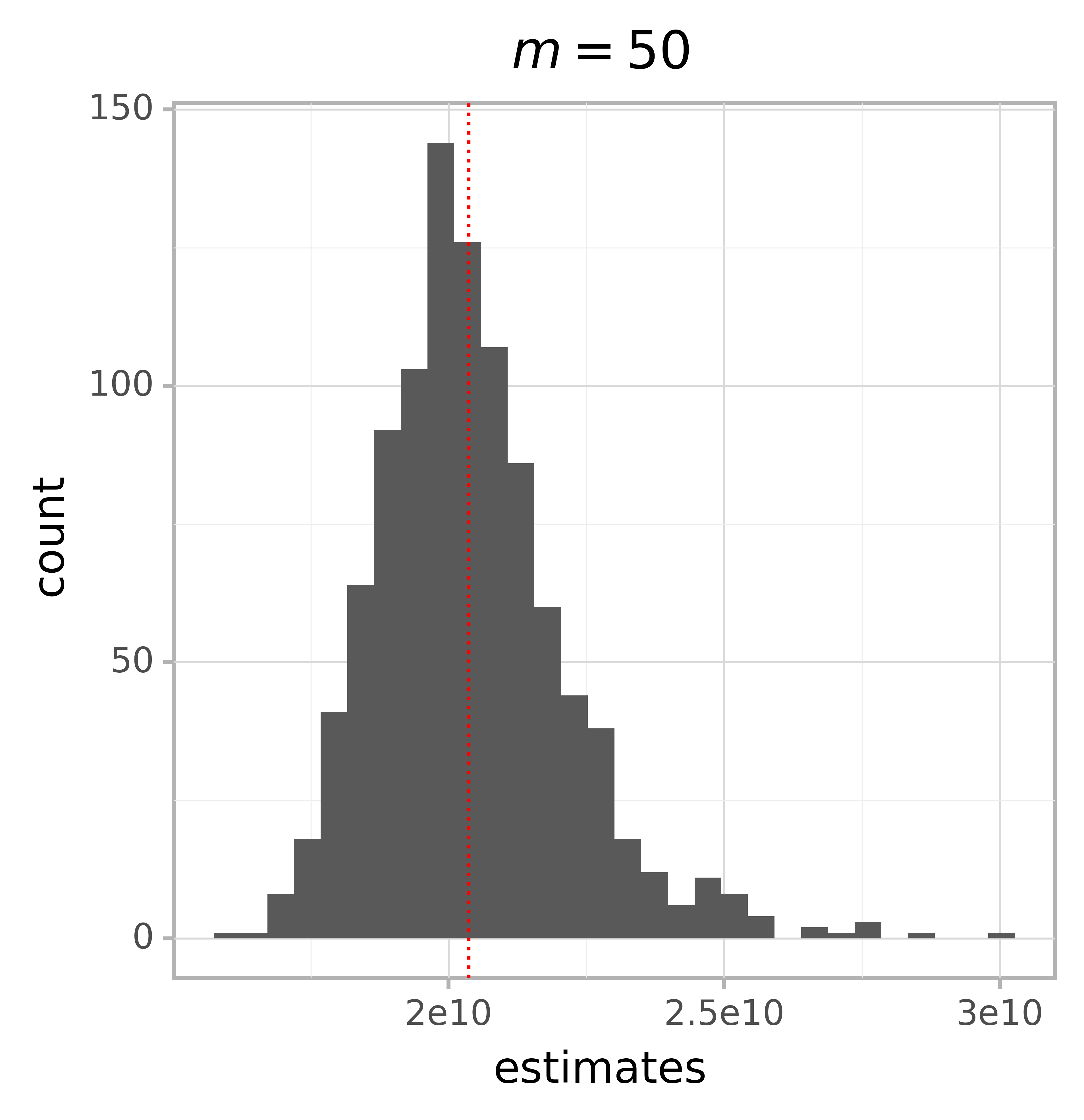}
\end{subfigure}

\begin{subfigure}{.35\textwidth}
  \centering
  \includegraphics[width=\linewidth]{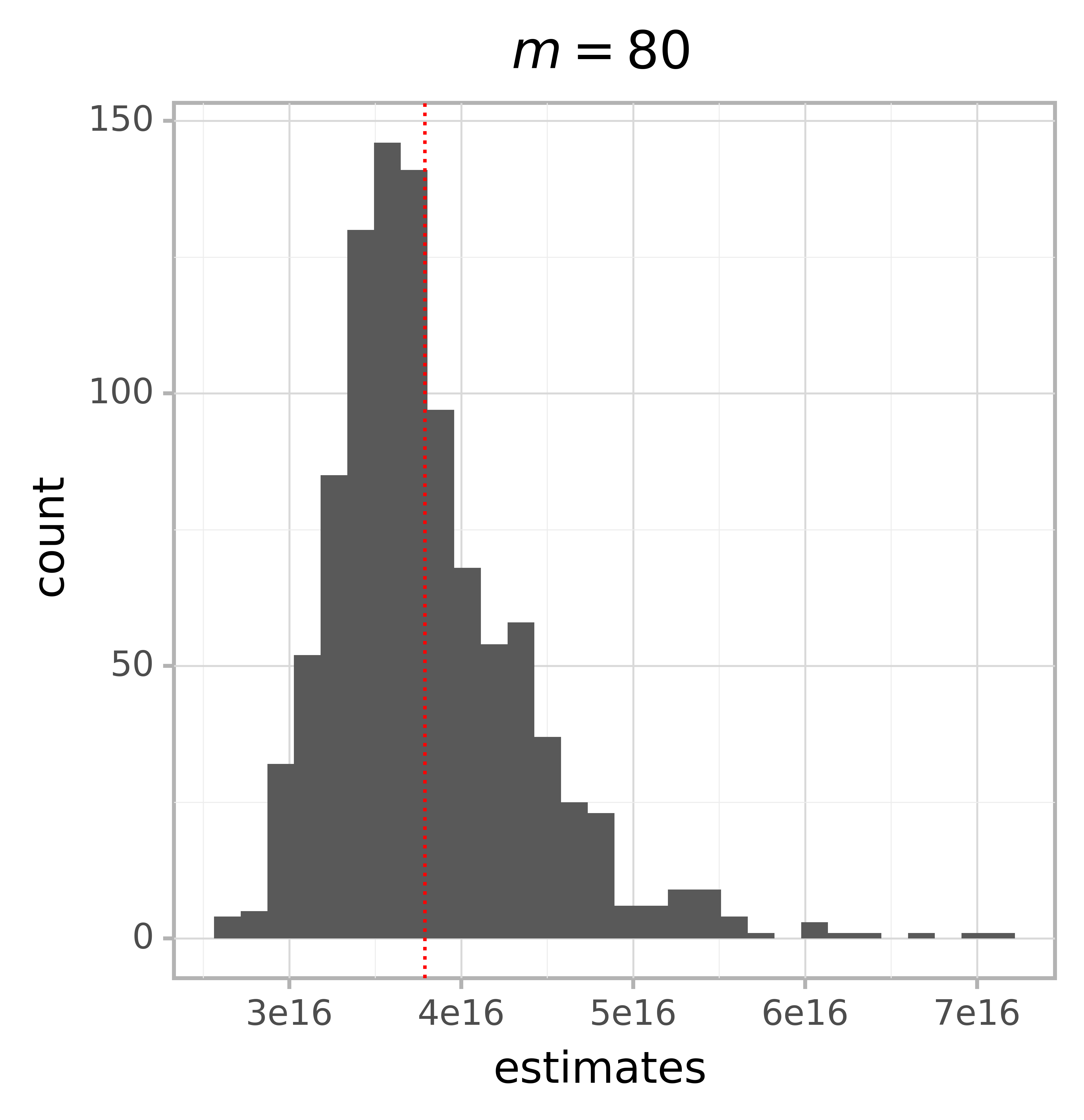}
\end{subfigure}%
\begin{subfigure}{.35\textwidth}
  \centering \includegraphics[width=\linewidth]{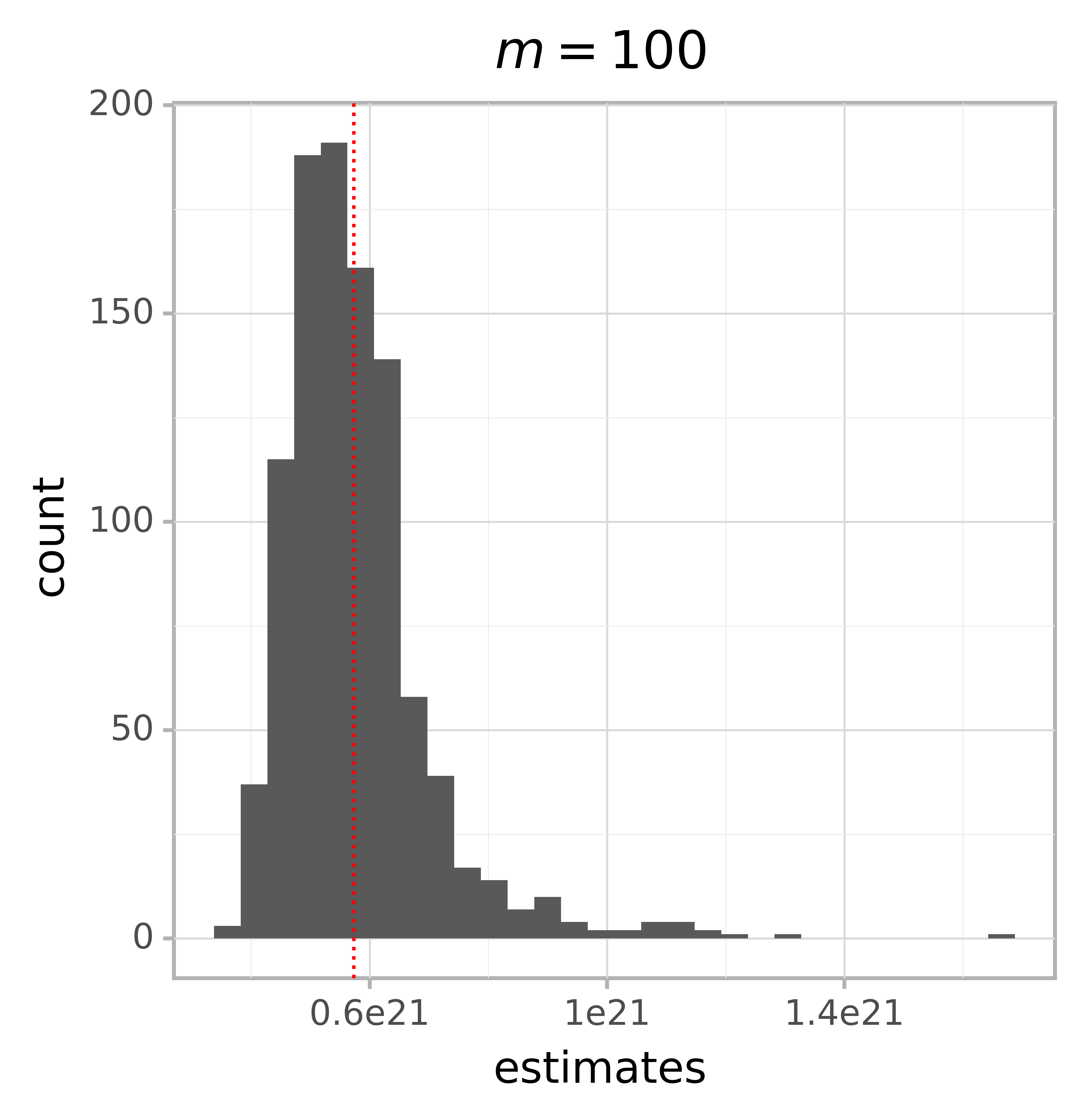}
\end{subfigure}
\caption{Histograms for each $\hat{\theta}_i$, for Fibonacci permutations of $m=20, 50, 80, 100$; in each plot, $n=1000$. As $m$ increases, extremely large but rare estimates appear. The red line indicates the true $\theta$.}
\label{fig:fibonacci_permutations_hist}
\end{figure}

Thus, importance sampling requires generating Fibonacci permutations $X_i$ from $q$ and calculating
\begin{equation*}
\hat{\theta}_i = \eta_{i}^{(1)} \cdot \eta_{i}^{(2)} \cdots \eta_{i}^{(n)},
\end{equation*}
where $\eta_i^{(j)}$ refer to the number of possible positions object $j$ could take when the previous objects $1, \ldots, j-1$ have been placed at positions $X_i^{(1)}, \ldots, X_i^{(j-1)}$. The usual importance sampling estimate is obtained by aggregating these estimates via the sample mean:
\begin{equation*}
\IS = \frac{1}{n} \sum_{i=1}^{n} \hat{\theta}_i.
\end{equation*}
As an alternative, one can consider aggregating $\hat{\theta}_i$ via the Bayesian median of means.

Figure \ref{fig:fibonacci_permutations_boxplot} has the results of using $\IS$, $\BMM$ and $\aBMM$ when $m=20, 50, 80, 100$ and $n=1000$. It becomes quite clear that $\BMM$, and even more so $\aBMM$, improve on the usual importance sampling estimates by severely biasing the large weights towards the mean (as illustrated in Figure \ref{fig:intuition_BMM}), resulting in larger gains in terms of MSE the larger the $m$.

Figure \ref{fibonacci_permutations_change_n} shows the relative MSE, measured as $\E[(\hat{\theta}-\theta)^2/\theta^2]$, for $\SM$, $\BMM$ and $\aBMM$. The top picture displays the case in which $n=100$ samples are used, and the lower picture has $n=1000$. Note that higher $m$ makes $\BMM$ and $\aBMM$ better since the variance component of MSE overwhelms the bias. For small $m$, all estimators perform comparably. Also, higher $n$ means all estimators are closer to each other, and the differences in relative MSE are lower.

\begin{figure}
\centering

{\fontfamily{cmss}\selectfont\text{\textbf{Boxplots for importance sampling estimates}}} 

\vspace{10mm}

\begin{subfigure}{.5\textwidth}
  \centering
  $m=20$ \includegraphics[width=\linewidth]{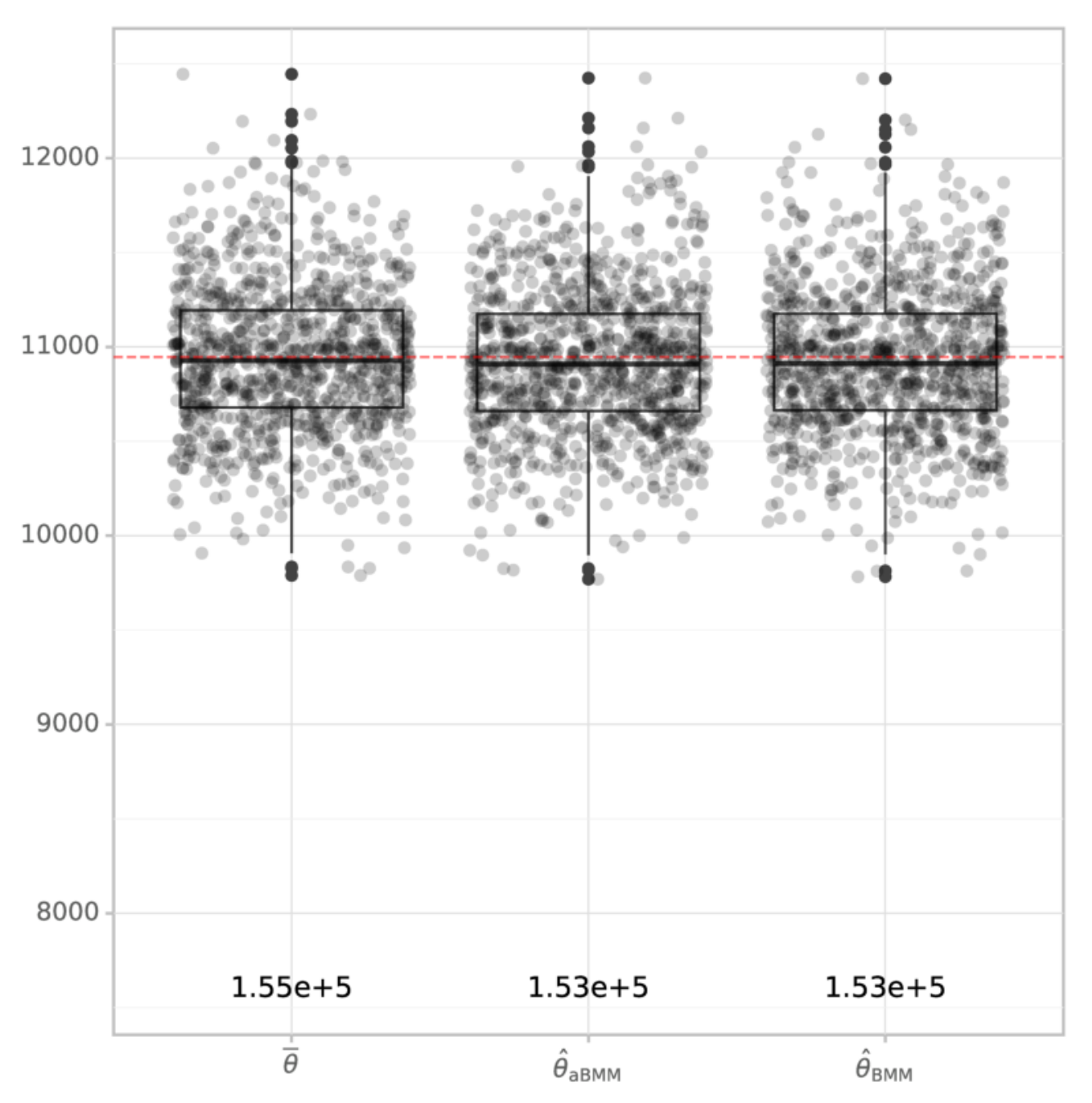}
\end{subfigure}%
\begin{subfigure}{.5\textwidth}
  \centering
  $m=50$ \includegraphics[width=\linewidth]{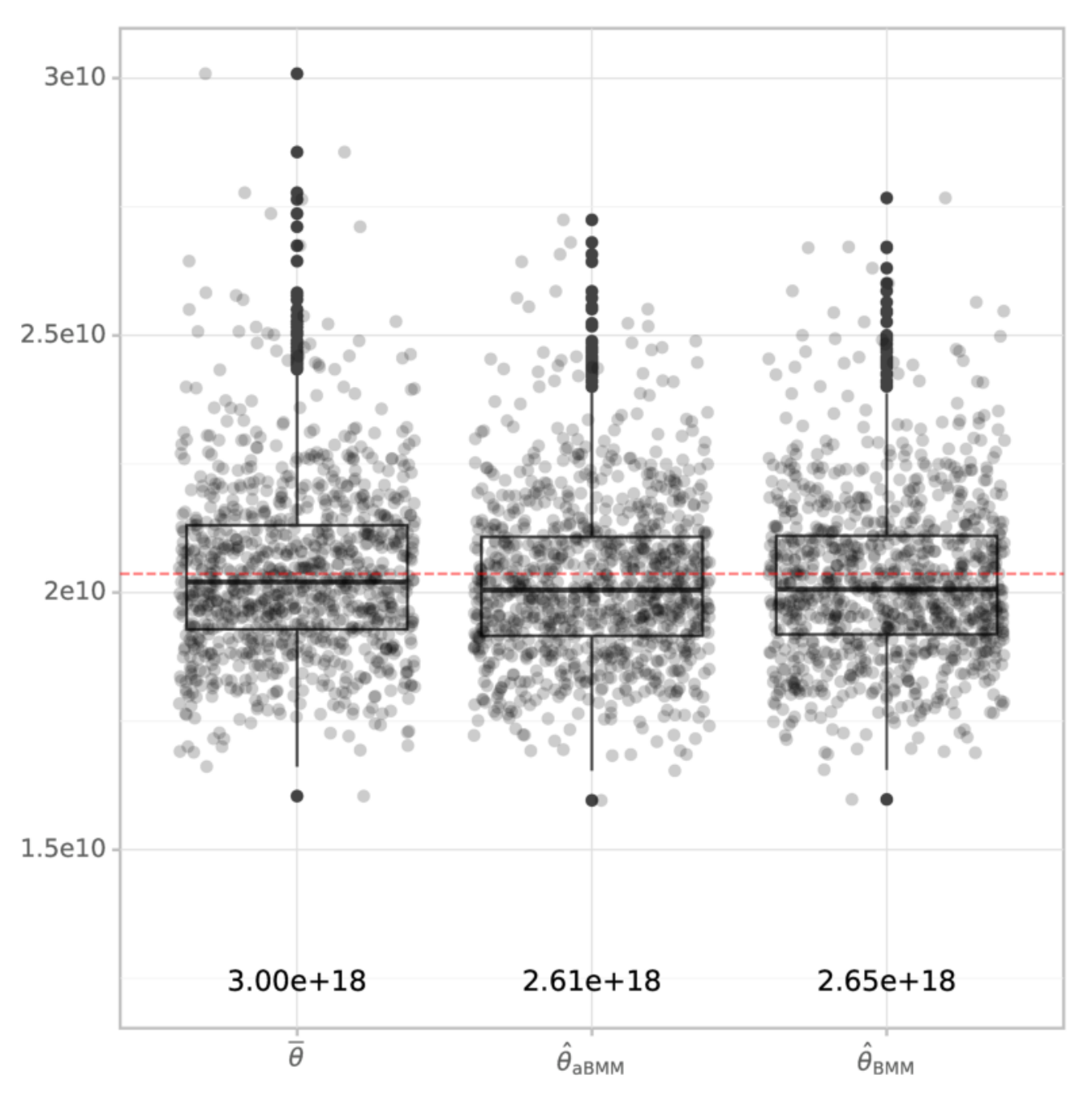}
\end{subfigure}

\vspace{5mm}

\begin{subfigure}{.5\textwidth}
  \centering
 $m=80$ \includegraphics[width=\linewidth]{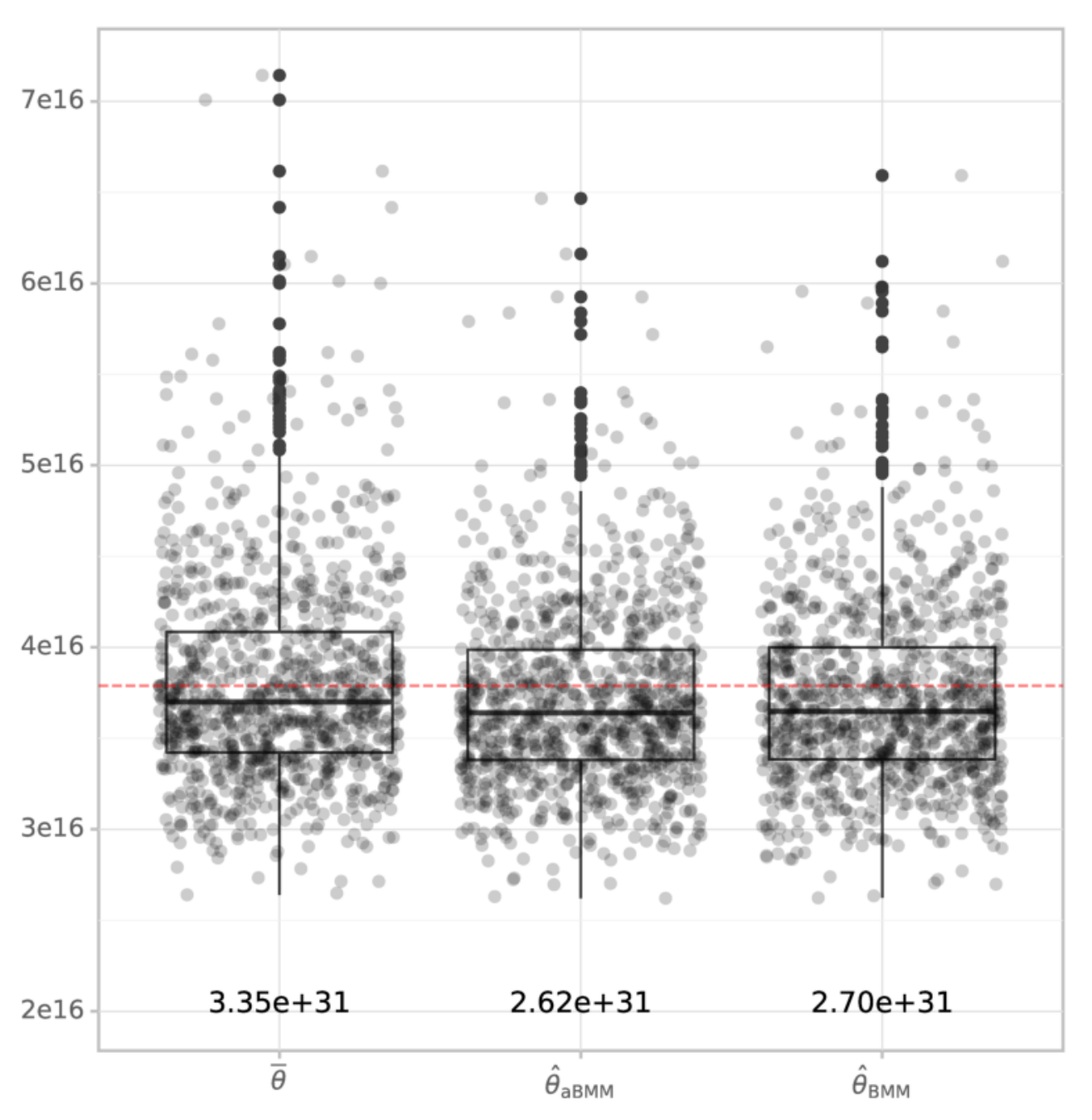}
\end{subfigure}%
\begin{subfigure}{.5\textwidth}
  \centering
  $m=100$ \includegraphics[width=\linewidth]{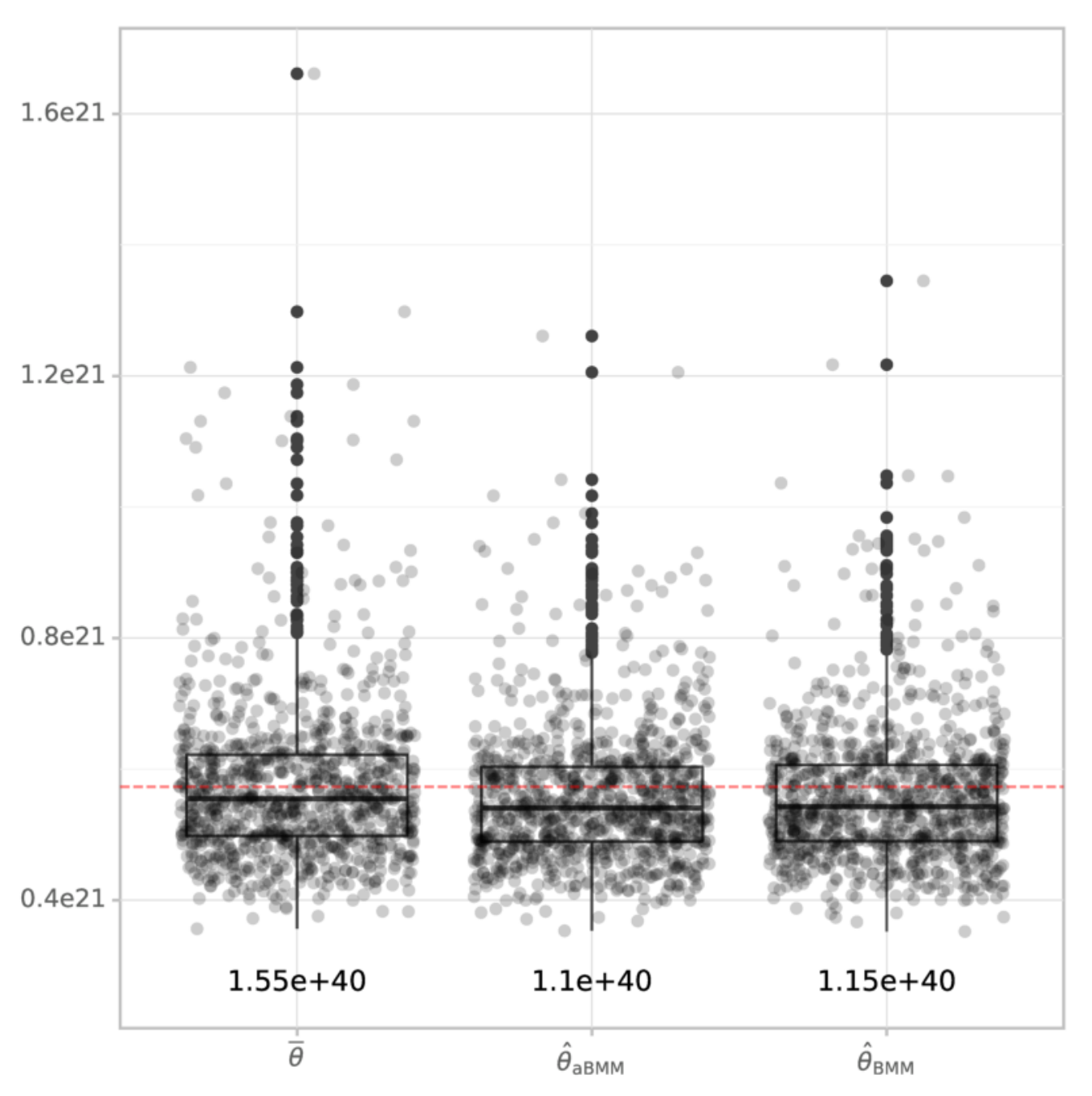}
\end{subfigure}
\caption{Boxplots for $\IS$, $\aBMM$ and $\BMM$, for Fibonacci permutations of $m=20, 50, 80, 100$; in each plot, $n=1000$, and the red line denotes the true value. The MSE for the estimators are shown in black. Note $\BMM$, and even more so $\aBMM$, improve on the usual IS estimator by biasing the estimates (the more so the more extreme the estimate).}
\label{fig:fibonacci_permutations_boxplot}
\end{figure}

\begin{figure}[htbp]
 \begin{center}
{\fontfamily{cmss}\selectfont\text{\textbf{Relative MSE for importance sampling estimates}}}
\vspace{5mm}

$n=100$
\includegraphics[width=1\textwidth]{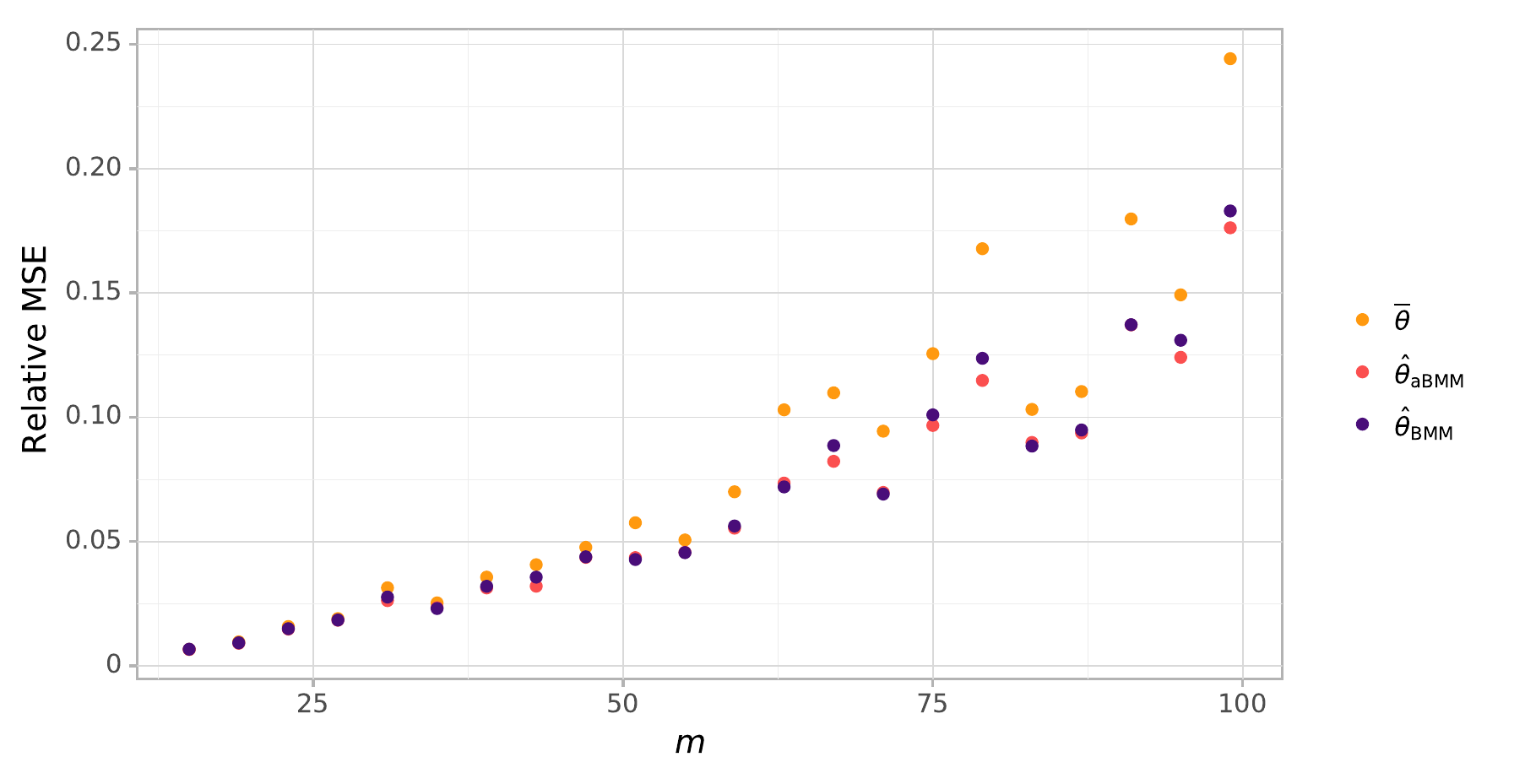}

\vspace{5mm}

$n=1000$ \includegraphics[width=1\textwidth]{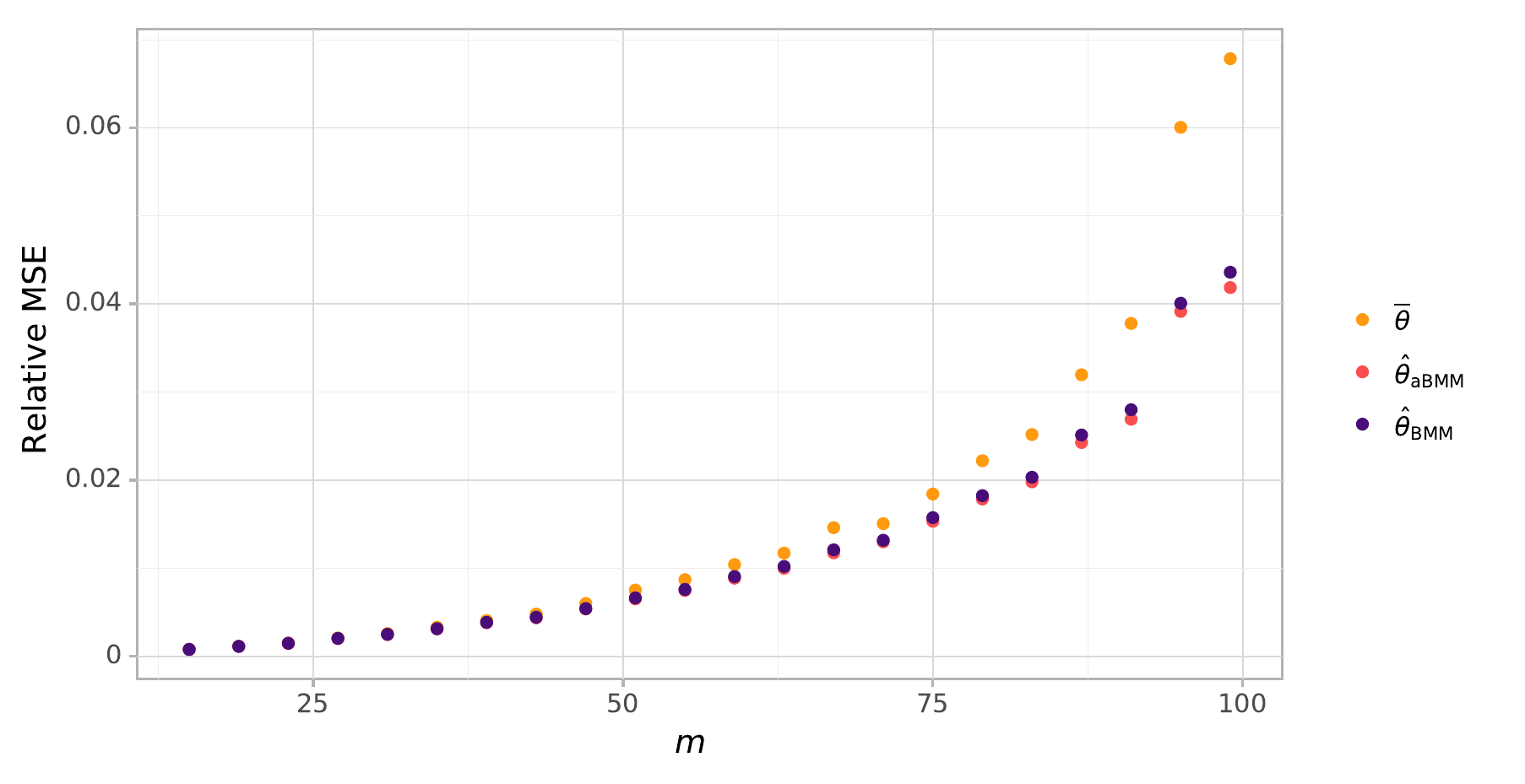}
\caption{Relative MSE, estimating $\E[(\hat{\theta}-\theta)^2/\theta^2]$, with $\hat{\theta}=\SM$, $\aBMM$ and $\BMM$, for $m=15, \ldots, 100$. Top picture shows case $n=100$ and lower picture shows $n=1000$. The $\BMM$ and $\aBMM$ gains are bigger the larger the $m$, the lower the $n$.}
\label{fibonacci_permutations_change_n}
 \end{center}
\end{figure}

\subsubsection{Cross-validation}

The Bayesian median of means can be used to improve many statistical methods that involve resampling the data. For instance, consider cross-validation, which is used to estimate the test error of a procedure, say linear regression. One divides the data into folds, and try to predict each fold using the other folds as training data. The estimated test error in each fold is aggregated via the sample mean, but consider using the Bayesian median of means instead. The gains should be especially evident when the estimates from different folds vary significantly.

Below, both methods are compared by estimating the test error in a variety of datasets. In each case, $20\%$ of the data is set aside as a validation set, and the remaining $80\%$ is used to generate the usual cross-validation (CV) estimates as well as the Bayesian median of means aggregation (BMM). Then, the full training data is used to train the statistical procedure, and the validation set provides an accurate estimate of test error (error). For each dataset, this is repeated for 100 different seeds, and the root mean squared error (RMSE) between CV and error, and BMM and error are computed.

The datasets were chosen from \cite{james2013introduction} and include the following:

\begin{enumerate}[label=(\roman*)]
\item \texttt{Advertising}: given advertising budgets for TV, radio and newspaper, predict sales of a product ($n=200$ data points, $p=4$ features).
\item \texttt{Auto}: given several features about a car, such as horsepower, weight and acceleration, predict its overall displacement ($n=392$, $p=9$).
\item \texttt{College}: using features from different colleges, such as whether they are private, number of accepted or enrolled students and number of graduate students, predict its graduation rate ($n=777$, $p=19$).
\item \texttt{Credit}: knowing a person's income, age, education and other financial data, predict their credit card balance ($n=400$, $p=12$). 
\item \texttt{Heart}: given features from people with chest pain, such as their age, sex and fitness measurements, predict whether they have heart disease or not ($n=297$, $p=17$).
\end{enumerate}

The first four datasets above involve regression tasks, while the last one is a classification problem. For the first four, the underlying procedures used were linear regression, random forest (with $20$ different trees) and $k$-nearest neighbors (with $5$ nearest neighbors). All features available were used, and dummies were created when necessary. The loss functions used was mean squared error for regression, and misclassification error for classification. The Bayesian median of means was used with $J=1000$ Dirichlet draws.

Generally, Tables \ref{table:advertising}-\ref{table:heart} show that the Bayesian median of means adds a small amount of stability to the cross-validation estimates, resulting in lower mean squared error. While the improvement is consistent across different datasets, it is often small. This is likely the consequence of estimates from different folds begin similar, so the Bayesian median of means cannot leverage its gains in variance reduction. In fact, in some cases, the usual cross-validation exhibit smaller standard error, such as in Table \ref{table:college}, but the only case in which cross-validation exhibits smaller MSE is in Table \ref{table:credit} for linear regression and $k$-nearest neighbors. Some care must be taken in interpreting these results, as real data represents only one realization from the probability distribution characterizing the data-generating mechanism. Still, the results support the conclusion that the Bayesian median of means is a competitive way to perform aggregation in data resampling schemes. 

\begin{table}[!ht]
\begin{center}
 \caption{{\fontfamily{cmss}\selectfont\textbf{Results for \texttt{Advertising} data}}} \label{table:advertising}
 \begin{tabular}{c||c|c|c||c|c|c||c|c|c}
  & \multicolumn{3}{c||}{Linear Regression} & \multicolumn{3}{c||}{Random forest} & \multicolumn{3}{c}{$k$NN} \\ \hline
  & CV & BMM & error & CV & BMM & error & CV & BMM & error \\ \hline
 mean & 3.002 & 2.939 & 2.889 & 0.714 & 0.7 & 0.667 & 2.532 & 2.514 & 2.264  \\ \hline
 std & 0.246 & 0.23 & 0.937 & 0.08 & 0.076 & 0.227 & 0.205 & 0.201 & 0.592  \\ \hline
 RMSE & 1.184 & \textbf{1.16} & - & 0.263 & \textbf{0.257} & - & 0.737 & \textbf{0.725} & - \\ \hline
 \end{tabular}
\end{center}
\end{table}

\begin{table}[!ht]
\begin{center}
 \caption{{\fontfamily{cmss}\selectfont\textbf{Results for \texttt{Auto} data}}} \label{table:auto}
 \begin{tabular}{c||c|c|c||c|c|c||c|c|c}
  & \multicolumn{3}{c||}{Linear Regression} & \multicolumn{3}{c||}{Random forest} & \multicolumn{3}{c}{$k$NN} \\ \hline
  & CV & BMM & error & CV & BMM & error & CV & BMM & error \\ \hline
 mean & 536.23 & 531.63 & 508.11 & 389.7 & 384.74 & 366.69 & 1345.65 & 1322.42 & 1235.03  \\ \hline
 std & 27.89 & 27.19 & 105.39 & 36.69 & 34.28 & 120.37 & 100.9 & 95.69 & 339.19
  \\ \hline
 RMSE & 134.72 & \textbf{133.09} & - & 135.12 & \textbf{133.92} & - & 443.72 & \textbf{432.38} & -
  \\ \hline
 \end{tabular}
\end{center}
\end{table}

\begin{table}[!ht]
\begin{center}
 \caption{{\fontfamily{cmss}\selectfont\textbf{Results for \texttt{College} data}}} \label{table:college}
 \begin{tabular}{c||c|c|c||c|c|c||c|c|c}
  & \multicolumn{3}{c||}{Linear Regression} & \multicolumn{3}{c||}{Random forest} & \multicolumn{3}{c}{$k$NN} \\ \hline
  & CV & BMM & error & CV & BMM & error & CV & BMM & error \\ \hline
 mean & 172.15 & 171.802 & 168.24 & 180.363 & 179.883 & 176.662 & 212.989 & 212.687 & 210.43 \\ \hline
 std & 6.601 & 6.682 & 23.478 & 8.332 & 8.37 & 24.348 & 8.303 & 8.285 & 24.506 \\ \hline
 RMSE &  30.044 & \textbf{29.989} & - & 31.478 & \textbf{31.445} & - & 31.949 & \textbf{31.861} & - \\ \hline
 \end{tabular}
\end{center}
\end{table}

\begin{table}[!ht]
\begin{center}
 \caption{{\fontfamily{cmss}\selectfont\textbf{Results for \texttt{Credit} data}}} \label{table:credit}
 \begin{tabular}{c||c|c|c||c|c|c||c|c|c}
  & \multicolumn{3}{c||}{Linear Regression} & \multicolumn{3}{c||}{Random forest} & \multicolumn{3}{c}{$k$NN} \\ \hline
  & CV & BMM & error & CV & BMM & error & CV & BMM & error \\ \hline
 mean & 10116.4 & 10080.4 & 10451.1 & 13662 & 13485.5 & 12966.4 & 48018.5 & 47829.4 & 47001.5  \\ \hline
 std & 444.9 & 452.6 & 1640.4 & 1201.4 & 1159.8 & 3912 & 2548.6 & 2599.8 & 7793.7  \\ \hline
 RMSE & \textbf{2098} & 2111.9 & - & 4492.9 & \textbf{4481.8} & - & \textbf{10004.7} & 10027.3 & -  \\ \hline
 \end{tabular}
\end{center}
\end{table}

\begin{table}[!ht]
\begin{center}
 \caption{\fontfamily{cmss}\selectfont \textbf{Results for \texttt{Heart} data}} \label{table:heart}
 \begin{tabular}{c||c|c|c||c|c|c||c|c|c}
  & \multicolumn{3}{c||}{LDA} & \multicolumn{3}{c||}{Neural network} & \multicolumn{3}{c}{$k$NN} \\ \hline
  & CV & BMM & error & CV & BMM & error & CV & BMM & error \\ \hline
 mean & 0.1651 & 0.1645 & 0.1527 & 0.1786 & 0.1781 & 0.1707 & 0.3541 & 0.354 & 0.349  \\ \hline
 std & 0.0139 & 0.0139 & 0.0401 & 0.0147 & 0.0147 & 0.0408 & 0.0236 & 0.0237 & 0.0598  \\ \hline
 RMSE & 0.0514 & \textbf{0.0512} & - & 0.0504 & \textbf{0.0502} & - & 0.0741 & \textbf{0.0739} & -  \\ \hline
 \end{tabular}
\end{center}
\end{table}

\subsubsection{Ensembling}

The Bayesian median of means can also be used to improve many statistical algorithms that rely on ensemble learning, such as bagging. In this case, a particular algorithm, say a regression tree, is trained on different subsets of the data to produce distinct final estimates, with the purpose of achieving some variance reduction. This section considers using the Bayesian median of means to boost these gains.

The advantages in using the Bayesian median of means are greater the more varying the estimates are. Thus, highly non-convex methods or high-variance data provide a setting in which aggregation via the Bayesian median of means can outperform classical bagging. Note, however, that the data here are not independent anymore, since there is a large overlap between training folds. Still, all the conditional guarantees from Section \ref{sec:theoretical_guarantees} hold.

To investigate how these methods fare in real datasets, consider the performance of using both the sample mean and the Bayesian median of means in conjunction with bagging. Six randomly chosen datasets from the UCI data repository (\cite{Dua:2019}) were picked to span different domains of interest:
\begin{enumerate}[label=(\roman*)]
\item \texttt{bike}: with features such as weather and number of registered users at the moment, the target is the number of bikes rented at a given time ($n=17379$ data points, $p=15$ features);
\item \texttt{demand}: for given a collection of business metrics from a large logistics company, such as orders of different types, predict the total daily orders ($n=60$, $p=11$);
\item \texttt{fires}: predict the burned area of forest fires in Portugal using meteorological and spatial data ($n=517$, $p=10$);
\item \texttt{GPUs}: predict the running time of for the multiplication of two $2048 \times 2048$ matrices using a GPU OpenCL SGEMM kernel with features regarding the computational environment and past runs ($n=241600$, $p=16$);
\item \texttt{news}: given features regarding an online news webpage such as number of words, links, date of publication and subject area, predict how popular the article will be ($n=39644$, $p=59$);
\item \texttt{superconductors}: given features about superconductors, predict the critical temperature ($n=21263$, $p=80$).
\end{enumerate}
Minimal modifications were made to the datasets, such as converting categorical predictors to dummy variables.

For each dataset, $15\%$ of the data were reserved for testing, and $20$ regression trees were grown, each fitted with different subsets of the training data and using all the features available. The $20$ predictions from each tree were aggregated using the sample mean and the Bayesian median of means. 

As measures of test error for each dataset, consider the proportional reduction in MSE and MAD when using $\BMM$:
\begin{align*}
\frac{\sum_{i=1}^{n_{\text{test}}} (\hat{y}_{\text{SM}, i} - y_i)^2 - (\hat{y}_{\text{BMM}, i} - y_i)^2}{\sum_{i=1}^{n_{\text{test}}} (\hat{y}_{\text{SM}, i} - y_i)^2}, \qquad  \frac{\sum_{i=1}^{n_{\text{test}}}|\hat{y}_{\text{SM}, i} - y_i)| - |\hat{y}_{\text{BMM}, i} - y_i|}{\sum_{i=1}^{n_{\text{test}}} |\hat{y}_{\text{SM}, i} - y_i|}
\end{align*}
where $y_i$ is the realized outcome in the test data for a given dataset, $\hat{y}_{\text{SM}, i}$ is the corresponding prediction using the sample mean for aggregation, and $\hat{y}_{\text{SM}, i}$ is the prediction using the Bayesian median of means. While these measures try to address how well this modified bagging performs, the exercise must be viewed with caution since the underlying data distribution is not known. 

Table \ref{table:bagging_datasets} shows an estimate for the proportional reduction in MSE and MAd, for each dataset; larger entries favor the Bayesian median of means. 

\begin{table}[h]
  \centering%
  \caption{\fontfamily{cmss}\selectfont \textbf{Reduction in MSE and MAD when using $\BMM$ instead of $\SM$}}\label{table:bagging_datasets}
 \vspace{2mm}
 \begin{tabular}{l|c|c|c|c|c|c}
         & \texttt{bike} & \texttt{demand} & \texttt{fires} & \texttt{GPUs} & \texttt{news} & \texttt{superconductors} \\ \hline
 MSE reduction  & 1.9\% & 17.3\% & 7.0\% & 0.6\% & 3.3\%  & -0.8\% \\ \hline
 MAD reduction & -0.3\% & 11.7\% & 7.0\% & 0.8\% & 4.4\% & 0.3\%
 \end{tabular}
\end{table}

In most cases, the Bayesian median of means provides an improvement over traditional bagging both in terms of MSE and MAD, although the improvements are modest. Whenever there is decrease in performance, it is generally small compared to the gains. The largest improvements happens on datasets with a small number of samples, so the 20 estimates to be aggregated are much less stable. From Section \ref{sec:asymptotic_approximation}, the smaller the variance or skewness of the estimates the more similar the procedure becomes to regular bagging. Overall, the gains in using the Bayesian median of means in datasets from a wide variety of areas displays its potential as an ensembling technique.

\section{Conclusion} \label{sec:conclusion}

This paper introduced the Bayesian median of means, a non-parametric aggregation procedure that leads to estimates with relatively small variance at the expense of some bias. Asymptotically, the added bias is negligible with respect to the reduction in variance, much like maximum likelihood estimates in the parametric setting. Furthermore, the Bayesian median of means is asymptotically unbiased, and essentially reduces to the sample mean, a widely used location estimator, when the variance or skewness of the underlying sample is small. Computationally, it is easy to implement and can be parallelized in a straightforward way.

The Bayesian median of means is, however, a randomized procedure, so a deterministic approximation was developed, and dubbed the approximate Bayesian median of means. It resembles a shrinkage-type estimator, and has similar performance to the Bayesian median of means while being much faster to run.

Both methods were empirically tested on a variety of datasets and simulations. Overall, the Bayesian median of means was shown to be competitive with the sample mean in settings of low variance, and performed significantly better as the variance increased. When tested in real datasets, it displayed small but consistent gains over the sample mean, and its performance on applications such as importance sampling, cross-validation and bagging showed it can be useful in adding robustness to data resesampling schemes.

There are several directions for future work. First, more refined concentration bounds are likely possible, and with it a better understanding of the theoretical properties of the algorithm. Second, the approximate Bayesian median of means calls for an a theoretical investigation of its own, including issues of minimaxity and admissibility. Third, more extensive empirical analyses can illuminate the extent to which the procedure can overcome the sample mean, or fail altogether.

Lastly, this paper intends to encourage research into designing better general-purpose aggregation procedures, and stimulate further work in an area that has far-reaching applications in statistics and beyond.

\bibliography{bibliography}{}
\nocite{*}
\bibliographystyle{apalike}

\end{document}